\theoremstyle{plain}
\newtheorem{Th}{Theorem}[section]
\newtheorem{Cor}[Th]{Corollary}
\newtheorem{Prop}[Th]{Proposition}
\newtheorem{Lemma}[Th]{Lemma}
\theoremstyle{definition}
\newtheorem{Def}[Th]{Definition}
\newtheorem{Not}[Th]{Notation}
\newtheorem{Rmk}[Th]{Remark}
\newcommand{\IN}{{\normalfont \textrm{in}}}
\newcommand{\link}{{\normalfont \textrm{Link}}}
\newcommand{\del}{{\normalfont \textrm{Del}}}
\newcommand{\row}{{\normalfont \textrm{row}}}
\newcommand{\col}{{\normalfont \textrm{col}}}
\newcommand{\HT}{{\normalfont \textrm{ht}}}
\newcommand{\EN}{{\normalfont \textrm{EN}}}
\newcommand{\ENS}{{\normalfont \textrm{ENS}}}
\title{Generic Generalized Diagonal Matrices}
\begin{document}
\author{Vinh Nguyen and Hunter Simper}\thanks{Hunter Simper was partially supported by NSF grant DMS-2100288 and by Simons Foundation Collaboration Grant for Mathematicians \#580839.}
\address{Vinh Nguyen \\ Department of Mathematics \\ Purdue University \\
West Lafayette \\ IN 47907 \\ USA} 
\email{nguye229@purdue.edu}

\address{Hunter Simper \\ Department of Mathematics \\ Purdue University \\
West Lafayette \\ IN 47907 \\ USA} 
\email{hsimper@purdue.edu}
\maketitle
\begin{abstract}Generalized diagonal matrices are matrices that have two ladders of entries that are zero in the upper right and bottom left corners. The minors of generic generalized diagonal matrices have  square-free initial ideals. We give a description of the facets of their Stanley-Reisner complex. With this description, we characterize the configuration of ladders that yield Cohen-Macaulay ideals. In the special case where both ladders are triangles, we show that the corresponding complex is vertex decomposable. Also in this case, we compute the height and multiplicity of the ideals. \end{abstract}
\section{Introduction}

The initial ideals of determinantal ideals are square-free, hence a combinatorial approach, using Stanley-Reisner complexes, can be applied to study them. Herzog and Trung in \cite{Herzog1992} gave a description of the Stanley-Reisner complexes of generic determinantal ideals. They use this description to compute the Hilbert multiplicity of these ideals. We introduce a more general class of matrices, generalized diagonal matrices, for which the methods in \cite{Herzog1992} are applicable. Generalized diagonal matrices are matrices with two ladders of zeros in the bottom left and top right corner. We are able to extend the results of \cite{Herzog1992} to generalized diagonal matrices. Furthermore, as our main result, we classify the shapes of zeros which yield an ideal that is Cohen-Macaulay.
\begin{Def}
A \textit{generalized diagonal} (GD) matrix is a $n \times m$ matrix with two ladders of zeros, $L_1$ and $L_2$, in the bottom left and top right, respectively. The ladders are described as follows. Let $c_1\geq c_2 \geq ...\geq c_s$ be a non-increasing set of positive integers, with $s < m$ and $c_1 < n$, then $L_1$ consist of the last $c_i$ entries in column $i$. Similarly, let $d_t \leq d_{t+1} \leq ... \leq d_m$ be a non-decreasing set of positive integers, with $t >1 $ and $d_m < n$, then $L_2$ consist of the first $d_i$ entries in column $i$. We allow $L_1$ or $L_2$ to be empty.
\end{Def}

In section \ref{Sec-Groebner-Basis-GD} we will compute the initial ideals of the ideals of minors of generic GD matrices. In section \ref{Sec-Complex} we analyze their Stanley-Reisner complex. We give a description of the facets of these complexes, cf. \cite{Herzog1992},\cite{Miller2005}, and \cite{Conca2003}. Additionally, these complexes can also be realized as complexes associated to certain posets and have been studied in \cite[Section 7]{Bjorner1980}. \\

A special case of a generic GD matrix is one where there are triangles of zeros in the two corners. This is precisely when $c_1 = t_1, c_2 = t_1 - 1, ..., c_{t_1} = 1$ for some $t_1 \geq 0$ and $d_{m-t_2+1} = 1, d_{m-t_2+2} = 2,..., d_{m} = t_2$ for some $t_2 \geq 0$. These matrices are determined completely by their size and the parameters, $t_1$ and $t_2$, the sizes of the triangles of zeros. The Stanley-Reisner complex of the initial ideals of their minors behaves nicely. In section \ref{Sec-Dim-Purity}, using the description of the facets of these complexes, we are able to give an explicit formula for the height of the $r \times r$ minors in terms of $n,m,t$, and $r$. In section \ref{Sec-CM} we show that these ideals are Cohen-Macaulay. Finally in section \ref{Sec-Multiplicity} we give a formula for the multiplicity of these ideals. Two more special cases occur when $t_1 = t_2 = 0$ and $t_1 = n-1 = m-1, t_2 = 0$. These parameters describe generic matrices and generic upper triangular square matrices, respectively. Hence we recover the results in \cite{Herzog1992}.

\section{Generic GD Matrices}\label{Sec-Groebner-Basis-GD}

\begin{Not} \label{Notation-Matrix}
Throughout, $X = (x_{ij})$ denotes a generic $n \times m$ GD matrix with ladders of zeros $L_1$ and $L_2$. $I = I_r(X)$, the ideal of $r \times r$ minors of $X$, and $R = k[X]$. We order the non-zero entries of $X$ lexicographically, i.e., $x_{ij} > x_{kl}$ if $i < k$, or $i = k$ and $j < l$. We then extend this order to a lexicographic order on the monomials of $R$. \\

For convenience, after determining the shape of $L_1$ and $L_2$, we mean the non-zero entries of $X$ when we refer to entries of $X$.
\end{Not}

As is standard practice, to deduce properties about $I$, we first compute $\textrm{in}(I)$ and determine a Gröbner basis for it. After finding $\textrm{in}(I)$, we describe the facets of its Stanley-Reisner complex. Loosely speaking, the facets are union of stair shape paths starting next to $L_2$ and ending next to $L_1$.\\

We first need a lemma that describes how initial ideals in $R$ and in $S$ interact.

\begin{Lemma}\label{Lemma-GB-Lift} Let $S$ be a polynomial ring over a field and let $T$ be an ideal generated by indeterminates of $S$. Let $R = S/T$ and let ``$-$" denote images in $R$. Fix a monomial ordering on $S$ and consider the induced monomial ordering on $R$. Suppose $\{g_1,...,g_k\}$ is a Gröbner basis for a homogeneous ideal $I$ of $S$ such that $\overline{{\normalfont\IN}(g_i)} = {\normalfont\IN}(\overline{g_i})$ for all $i$ and ${\normalfont\IN}(I + T) = {\normalfont\IN}(I) + T$, then $\{\overline{g_1},...,\overline{g_k}\}$ is a Gröbner basis for $\overline{I}$.
\end{Lemma}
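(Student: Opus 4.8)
The plan is to reduce everything to the fact that $T$ is generated by indeterminates, which makes $R = S/T$ itself a polynomial ring. First I would fix notation: if $T = (y_1, \dots, y_p)$ with the $y_j$ among the variables of $S$, then the monomials of $R$ are exactly the images of the monomials of $S$ not divisible by any $y_j$ (call these \emph{$T$-free}), and the induced order on $R$ is just the restriction of the order on $S$ under this identification. For any $f \in S$, writing $f = f_0 + f_T$ where $f_0$ collects the $T$-free terms and $f_T \in T$, one has $\overline{f} = \overline{f_0}$ and, crucially, $\IN(\overline{f})$ computed in $R$ equals $\IN(f_0)$ computed in $S$, since the $S$-largest $T$-free monomial of $f$ is the largest monomial of $f_0$. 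The generation half of the claim is immediate: since $g_1, \dots, g_k$ generate $I$, their images generate $\overline{I} = (I+T)/T$. So the whole content is the initial-ideal equality $\IN(\overline{I}) = (\IN(\overline{g_1}), \dots, \IN(\overline{g_k}))$ in $R$.

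For that equality the containment $\supseteq$ holds because each $\overline{g_i}$ lies in $\overline{I}$, so I would concentrate on $\subseteq$, that is, on showing every leading term of $\overline{I}$ is divisible by some $\IN(\overline{g_i})$. Take an arbitrary nonzero element of $\overline{I}$; it is $\overline{f}$ for some $f \in I$. Decomposing $f = f_0 + f_T$ as above, I get $\IN(\overline{f}) = \IN(f_0)$ in $S$, while $f_0 = f - f_T \in I + T$. Therefore the $T$-free monomial $\IN(f_0)$ lies in $\IN(I + T)$, and by the hypothesis $\IN(I+T) = \IN(I) + T$ it lies in $\IN(I) + T$.

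Now I would exploit that $\IN(I) + T$ is a monomial ideal with $T$ generated by variables: a $T$-free monomial lying in $\IN(I) + T$ is divisible by none of the $y_j$, so it must already lie in $\IN(I)$. Since $\{g_i\}$ is a Gröbner basis of $I$, we have $\IN(I) = (\IN(g_1), \dots, \IN(g_k))$, whence $\IN(g_i) \mid \IN(f_0)$ for some $i$. A divisor of a $T$-free monomial is $T$-free, so $\IN(g_i)$ is $T$-free, and the hypothesis $\overline{\IN(g_i)} = \IN(\overline{g_i})$ then gives $\IN(g_i) = \overline{\IN(g_i)} = \IN(\overline{g_i})$. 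As divisibility among $T$-free monomials is the same read in $S$ or in $R$, this yields $\IN(\overline{g_i}) \mid \IN(\overline{f})$ in $R$, completing the argument.

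The steps are essentially formal once the identification of $R$-monomials with $T$-free $S$-monomials is in place; the one place that genuinely uses the shape of the hypotheses is the passage from $\IN(f_0) \in \IN(I+T) = \IN(I)+T$ to $\IN(f_0) \in \IN(I)$, which I expect to be the main obstacle to state cleanly. It breaks without $T$ being a monomial ideal, so that $\IN(T) = T$ and membership of a monomial is controlled by divisibility by the generators. It is exactly the role of the two hypotheses to guarantee, respectively, that the chosen leading terms survive intact in $R$ and that passing to the quotient creates no new leading terms beyond those already accounted for by $T$.
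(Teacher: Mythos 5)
Your proof is correct, but it is a genuinely different argument from the one in the paper. The paper proves the equality $\bigl(\,\overline{\IN(g_i)}\,\bigr) = \IN(\overline{I})$ by a Hilbert function computation: it notes the containment $\bigl(\,\overline{\IN(g_i)}\,\bigr) \subset \IN(\overline{I})$ and then forces equality by chaining six identities of Hilbert functions, $\textrm{HF}_{R/\IN(\overline{I})} = \textrm{HF}_{R/\overline{I}} = \textrm{HF}_{S/I+T} = \textrm{HF}_{S/\IN(I+T)} = \textrm{HF}_{S/\IN(I)+T} = \textrm{HF}_{S/(\IN(g_i))+T} = \textrm{HF}_{R/(\,\overline{\IN(g_i)}\,)}$, using Macaulay's theorem (passing to initial ideals preserves Hilbert functions) twice together with both hypotheses. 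You instead argue element-by-element: identifying monomials of $R$ with $T$-free monomials of $S$, you take $f \in I$ with $\overline f \neq 0$, observe $\IN(\overline f) = \IN(f_0)$ with $f_0 = f - f_T \in I+T$, push $\IN(f_0)$ through $\IN(I+T) = \IN(I)+T$, and use that a $T$-free monomial in a sum of monomial ideals containing $T$ must lie in $\IN(I)$, then transport divisibility by some $\IN(g_i)$ back to $R$ via the hypothesis $\overline{\IN(g_i)} = \IN(\overline{g_i})$. Each approach has its merits: the paper's argument is shorter given the standard machinery, but it genuinely needs $I$ homogeneous (equality of a containment of ideals is deduced from equality of Hilbert functions, and the graded comparison is what makes Macaulay's theorem applicable in this form), whereas your argument never uses homogeneity and so proves a slightly more general statement; it also makes visible exactly where each hypothesis and the monomial-ideal structure of $T$ enter, including the small point (implicit in your step choosing $i$) that the relevant $\IN(g_i)$ is automatically $T$-free, so $\overline{g_i} \neq 0$ and no degenerate case arises.
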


\begin{proof}
We only need to show that $ \big( \;\overline{\textrm{in}(g_i)}\; \big) = \textrm{in}(\overline{I})$, then we are done by the assumption $\big(\;\overline{\textrm{in}(g_i)}\;\big)  = (\textrm{in}(\overline{g_i}))$. Again, by that assumption, since $(\textrm{in}(\overline{g_i})) \subset \textrm{in}(\overline{I})$, we have $\big(\;\overline{\textrm{in}(g_i)}\;\big)\subset \textrm{in}(\overline{I})$. Equality follows if their Hilbert functions are the same. We compare their functions as follows, $$\textrm{HF}_{R/\textrm{in}(\overline{I})} = \textrm{HF}_{R/\overline{I}} = \textrm{HF}_{S/I+T} = \textrm{HF}_{S/\textrm{in}(I + T)} $$ $$= \textrm{HF}_{S/\textrm{in}(I) + T} = \textrm{HF}_{S/ ( \textrm{in}(g_i) ) + T} = \textrm{HF}_{R/ \left( \; \overline{\textrm{in}(g_i)} \; \right)}.$$ \end{proof}

\begin{Prop}\label{GB-I}
The minors of $X$ form a Gröbner basis for $I_r(X)$.
\end{Prop}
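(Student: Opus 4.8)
The plan is to apply Lemma~\ref{Lemma-GB-Lift} with $S$ the generic polynomial ring (all entries $x_{ij}$ indeterminate, including those in the ladder positions), $T$ the ideal generated by the indeterminates sitting in $L_1 \cup L_2$, and $R = S/T = k[X]$. We want $I$ in the lemma to be the ideal of $r\times r$ minors of the \emph{fully generic} matrix, for which it is a classical result (Sturmfels, Bernstein--Zelevinsky, and the treatment in \cite{Miller2005}) that the set of $r\times r$ minors is a Gröbner basis with respect to our lexicographic (diagonal) term order, the initial term of each minor being the product of its main-diagonal entries. The images $\overline{g_i}$ are then exactly the minors of the GD matrix $X$ (those minors whose main diagonal meets $L_1\cup L_2$ map to elements whose initial term vanishes, but we must treat them carefully), so once the two hypotheses of the lemma are verified, the images of the generic minors form a Gröbner basis for $\overline{I} = I_r(X)$, which is the assertion.

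**First I would** verify the hypothesis $\overline{\IN(g_i)} = \IN(\overline{g_i})$ for each minor $g_i$. Since $\IN(g_i)$ is the main-diagonal monomial of the minor, this reduces to a combinatorial dichotomy: either the main diagonal of that submatrix avoids the ladder positions, in which case $\overline{\IN(g_i)}$ is a nonzero squarefree monomial that is still the initial term of $\overline{g_i}$; or the main diagonal hits a zero entry, in which case $\overline{\IN(g_i)} = 0$ and I must check that $\IN(\overline{g_i}) = 0$ as well, i.e. that $\overline{g_i}$ itself is zero in $R$. The key geometric point, using the staircase shape of $L_1$ (last $c_i$ entries of column $i$, nonincreasing) and $L_2$ (first $d_i$ entries, nondecreasing), is that if the lexicographically largest diagonal of a submatrix lands in a ladder, then by the monotonicity of the $c_i$ and $d_i$ \emph{every} diagonal of that submatrix is forced into the ladder, so the whole minor vanishes modulo $T$. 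This monotonicity argument is where the GD hypotheses are actually used and is the step I expect to require the most care.

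**Next I would** establish the harder hypothesis $\IN(I+T) = \IN(I)+T$. The inclusion $\IN(I)+T \subseteq \IN(I+T)$ is immediate since the generators of $T$ are themselves their own initial terms. For the reverse inclusion I would argue at the level of standard monomials: a monomial lies outside $\IN(I)+T$ precisely when it is a standard monomial of $I$ (not in $\IN(I)$) that is also not divisible by any ladder variable, and I must show such monomials are linearly independent modulo $I+T$, equivalently that they span a space at least as large as $R/(I+T)$ in each degree. The cleanest route is a Hilbert-function count: $\dim_k (S/(\IN(I)+T))_\nu$ counts standard monomials of $I$ avoiding $T$, and I want this to equal $\dim_k (S/\IN(I+T))_\nu = \dim_k(S/(I+T))_\nu$. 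Because $T$ is generated by a subset of the variables and the diagonal term order is compatible with setting those variables to zero, one can pass to the quotient ring and compare the standard-monomial bases directly; the squarefree, diagonal nature of $\IN(I)$ makes this compatibility transparent.

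**The main obstacle** I anticipate is precisely this compatibility of the initial ideal with the quotient by $T$ in the Hilbert-function comparison: a priori, setting ladder variables to zero could collapse standard monomials or create unexpected reductions, and proving $\IN(I+T)=\IN(I)+T$ rather than merely $\supseteq$ requires knowing that no generic minor, after deleting the variables of $T$, acquires a \emph{new} and strictly smaller initial term that fails to lie in $\IN(I)+T$. Handling this cleanly is exactly what the vanishing dichotomy from the previous step buys us, since it guarantees that reduction modulo $T$ either kills a minor outright or leaves its diagonal initial term intact, with no intermediate collapse; assembling these observations into the stated equality, and then feeding everything into Lemma~\ref{Lemma-GB-Lift}, completes the proof.
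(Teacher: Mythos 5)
Your setup coincides with the paper's: lift to the fully generic matrix, let $T$ be the ideal of ladder variables, and feed everything into Lemma~\ref{Lemma-GB-Lift}. Your handling of the first hypothesis, $\overline{\IN(g_i)} = \IN(\overline{g_i})$, is also essentially the paper's argument: the observation that a main-diagonal entry of a minor lying in a ladder forces, by the staircase shape, \emph{every} permutation term of that minor into the ladder (so the whole minor lies in $T$ and its image is zero) is precisely the content of Lemma~\ref{Lemma-Div-Det}, which the paper proves by Laplace expansion and you prove by a monotonicity/pigeonhole argument on the lower-left (or, after transposing, upper-right) block. That part is sound.

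The genuine gap is in your verification of $\IN(I+T) = \IN(I)+T$. The Hilbert-function comparison you propose is circular: to conclude $\dim_k\bigl(S/(I+T)\bigr)_\nu = \dim_k\bigl(S/(\IN(I)+T)\bigr)_\nu$ you would need an independent computation of the Hilbert function of $S/(I+T)$, and the only access to it is through a Gr\"obner basis of $I+T$, which is exactly what is being constructed. Likewise, your closing claim that the vanishing dichotomy for the \emph{generators} rules out ``intermediate collapse'' for arbitrary elements of $I+T$ is the statement that needs proof, not a consequence of the dichotomy: for $f = g + t$ with $g \in I$, $t \in T$, the leading term of $g$ may itself be divisible by ladder variables or may cancel against terms of $t$, and controlling such cancellations is what an S-pair (Buchberger-type) argument is for. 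The paper supplies this missing bridge by citing the lifting criterion of \cite[1.9]{Gorla2007}: the desired equality holds provided that for every minor $\delta'$ and every ladder variable $x$ there exists $f \in I \cap T$ with $\IN(f) = \textrm{lcm}(x, \IN(\delta'))$; one takes $f = x\delta'$ when $x$ is off the main diagonal of $\delta'$, and $f = \delta'$ (which lies in $T$ by Lemma~\ref{Lemma-Div-Det}) when $x$ is on it. Your dichotomy is exactly what verifies this criterion---equivalently, the S-pair of $\delta'$ with a diagonal ladder variable $x$ is the tail $\delta' - \IN(\delta')$, every term of which is divisible by a ladder variable and hence reduces to zero modulo $T$---so your proof is repaired by replacing the Hilbert-function count with this citation or with a direct Buchberger check on the set consisting of the generic minors together with the ladder variables.
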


\begin{proof} Let $X'$ denote the fully generic $n \times m$ matrix, where all of the entries are non-zero. Let $S = k[X']$, $I' = I_r(X')$, and $T = (L_1 \cup L_2)$. Then we identify $R$ as $S/T$ and we have $I = I_r(X) = I' + T / T$. Further, we may choose an ordering on $S$ so that the ordering on $R$ is induced from that of $S$. Let $I = (\delta_1,...,\delta_s)$ where $\delta_i$ are the $r \times r$ minors of $X$. Let ${\delta_i}'$ be the lifts $\delta_i$ in $X'$, i.e., they are the determinants of the corresponding submatrix in $X$. By a well-known result, $\{{\delta_1}',...,{\delta_s}'\}$ forms a Gröbner basis for $I'$ \cite[5.3]{Conca2003}, \cite[2.4]{Herzog1992}. \\

We check that $\{{\delta_1}',...,{\delta_s}'\}$ satisfies the conditions of Lemma \ref{Lemma-GB-Lift}. By \cite[1.9]{Gorla2007}, $\textrm{in}(I' + T) = \textrm{in}(I') + T$ if for any minor ${\delta_i}' \in I'$ and any entry $x \in T$, there is an element $f \in I' \cap T$ such that $\textrm{in}(f) = \textrm{lcm}(x, \textrm{in}({\delta_i}'))$. Pick ${\delta_i}' \in I'$ and $x \in T$. Now either $\textrm{lcm}(x, \textrm{in}({\delta_i}')) = x \textrm{in}({\delta_i}')$ or $\textrm{lcm}(x, \textrm{in}({\delta_i}')) = \textrm{in}({\delta_i}')$, depending on whether or not $x$ is along the main diagonal of ${\delta_i}'$. In the former case, take $f = x{\delta_i}'$. In the latter case, ${\delta_i}'$ or its transpose is of the correct form to apply Lemma \ref{Lemma-Div-Det}, hence ${\delta_i}' \in (x) \cap T$ and we take $f = {\delta_i}'$. \\

We are done once we check that $ \textrm{in}(\delta_i) = \overline{\textrm{in}({\delta_i}')}$, where ``$-$" denotes image in $R$. Let $d_i,{d_i}'$ be the submatrices of $X,X'$ corresponding to $\delta_i,{\delta_i}'$, respectively. If $\delta_i \neq 0$, then by Lemma \ref{Lemma-Div-Det}, the main diagonal of ${d_i}'$ does not meet $T$. Hence the image of the main diagonal of $d_i$ is equal to the main diagonal of $\overline{{d_i}'}$ thus $ \textrm{in}(\delta_i) = \overline{\textrm{in}({\delta_i}')}$. If $\delta_i = 0$, then ${\delta_i}'\in T$, since $T$ is a monomial ideal, we have that each term of ${\delta_i}'$ is in $T$. In particular, $ \textrm{in}({\delta_i}')\in T$ so $\overline{\textrm{in}({\delta_i}')}=0$.
\end{proof}

\begin{Lemma}\label{Lemma-Div-Det} Let $Y$ be a $n\times n$ matrix of the form $\begin{bmatrix} * & * \\ A & * \end{bmatrix}$, where $A$ is a $(n-i+1)\times i$ submatrix. Then $\det Y\subset I_1(A)$. In particular, if $A$ is a submatrix of the form $\begin{bmatrix} 0 & x \\ 0 & 0 \end{bmatrix}$, where $x \neq 0$, then $x$ divides $\det Y$.
\end{Lemma}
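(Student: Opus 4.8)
The plan is to argue directly from the Leibniz expansion of the determinant, using a counting (pigeonhole) argument on the columns spanned by $A$. First I would pin down the block structure precisely: since $A$ is $(n-i+1)\times i$ and sits in the bottom-left corner, its columns are columns $1,\dots,i$ of $Y$ and its rows are rows $i,\dots,n$ of $Y$. Consequently the two top blocks occupy only the first $i-1$ rows of $Y$. Recall $I_1(A)$ is the ideal generated by the entries of $A$, so it suffices to show that every term in the expansion of $\det Y$ is divisible by some entry of $A$.

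Next I would write $\det Y=\sum_{\sigma}\operatorname{sgn}(\sigma)\prod_{k=1}^n y_{k,\sigma(k)}$ and show each summand contains a factor $y_{k,\sigma(k)}$ lying in $A$. Fix a permutation $\sigma$ and consider the preimages of the $i$ columns $\{1,\dots,i\}$, namely the $i$ distinct rows $\sigma^{-1}(1),\dots,\sigma^{-1}(i)$. Only $i-1$ rows of $Y$ lie above $A$ (rows $1,\dots,i-1$), so by pigeonhole at least one column $j\in\{1,\dots,i\}$ has its matched row $k=\sigma^{-1}(j)$ in $\{i,\dots,n\}$. Then $(k,j)$ is a position of $A$, so $y_{k,j}$ is an entry of $A$ and divides $\prod_k y_{k,\sigma(k)}$. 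Since this holds for every $\sigma$, we conclude $\det Y\in I_1(A)$.

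For the ``in particular'' statement I would simply observe that when $A=\begin{bmatrix}0 & x\\ 0 & 0\end{bmatrix}$ its only nonzero entry is $x$, so $I_1(A)=(x)$; the first part then gives $\det Y\in(x)$, i.e.\ $x\mid\det Y$.

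The argument is short and its only real content is the pigeonhole step, so I do not anticipate a genuine obstacle; the main thing to be careful about is the bookkeeping of the block dimensions, ensuring the count ``$i$ columns of $A$ but only $i-1$ rows above $A$'' comes out correctly. If one prefers to avoid permutations, an equivalent route is a generalized Laplace expansion of $\det Y$ along columns $1,\dots,i$: each $i$-subset of rows indexing a term must meet $\{i,\dots,n\}$ (again because $i>i-1$), and expanding that $i\times i$ minor along such a row exhibits an entry of $A$ as a factor of every resulting monomial.
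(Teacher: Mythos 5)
Your proof is correct, and it takes a genuinely different (if closely related) route from the paper's. You expand $\det Y$ term-by-term via the Leibniz formula and argue by pigeonhole: the columns $1,\dots,i$ of $Y$ must be matched to $i$ distinct rows, but only $i-1$ rows of $Y$ lie above $A$, so every monomial of $\det Y$ contains an entry of $A$. The paper instead performs a generalized Laplace expansion along the last $n-i$ rows, observes that every $(n-i)\times(n-i)$ minor except the one on columns $i+1,\dots,n$ already lies in $I_1(A)$ (such a minor uses a full column of entries of $A$), and then handles the single exceptional term by noting that its complementary $i\times i$ minor has the first row of $A$ as its last row. Both arguments hinge on the same dimension count, $(n-i+1)+i=n+1>n$; yours is more elementary and uniform, with no exceptional case, and it establishes the slightly sharper fact that each individual monomial of $\det Y$ is divisible by an entry of $A$, while the paper's is more compact to state. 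Your alternative suggestion of a Laplace expansion along columns $1,\dots,i$ is essentially the transposed version of the paper's proof, and is arguably cleaner than it: every $i\times i$ minor on those columns must use a row meeting $A$, so no exceptional term needs separate treatment.
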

\begin{proof}
Take a $n-i$ order expansion for the determinant of $Y$ along its last $n-i$ rows. Observe that in this expansion, all but one $n-i\times n-i$ minor is in $I_1(A)$, and that minor term corresponds to an $i \times i$ minor whose last row is the first row of $A$.
\end{proof}

\begin{Rmk}\label{Rmk-GB} The fact that the minors form a Gröbner basis is the cornerstone for the deductions in the following section. From the proof, we see that the placement of the zeros in a generic GD matrix ensures that diagonal terms of minors of $X'$ either survive in $X$ or correspond to a minor that is zero. If we attempt to generalize to sparse matrices, this approach breaks down as we cannot control the image in $X$ of the main diagonal terms of minors in $X'$. In \cite{Giusti1982}, the authors computed the height and classified the Cohen-Macaulayness of maximal minors of sparse matrices. In the case that a sparse matrix is, up to row and column operations, a GD matrix, we recover \cite[1.6.2]{Giusti1982} and extend their results to arbitrary size minors.

\end{Rmk}

\section{The Stanley-Reisner Complex of $I_r(X)$}\label{Sec-Complex}

We first establish some notation and definitions for some standard operations on simplicial complexes.

\begin{Def} Let $\Delta$ be a complex.
\begin{enumerate}
    \item Let $F$ be a face of $\Delta$, then the \textit{deletion} of $\Delta$ at $F$ is the subcomplex $$\del_F(\Delta) = \{ G \in \Delta : G \cap F = \emptyset\}.$$
    \item Let $F$ be a face of $\Delta$, then the \textit{link} of $\Delta$ at $F$ is the subcomplex $$\link_F(\Delta) = \{ G \in \Delta : G \cap F = \emptyset\ \text{ and } G \cup F \in \Delta\}.$$
    \item Let $\Sigma$ be a complex whose vertex set is disjoint from the vertex set of $\Delta$. Then the \textit{join} of $\Sigma$ with $\Delta$ is the following complex, $$\Delta * \Sigma = \{G \cup F \; : \; G \in \Delta, F \in \Sigma \}.$$
\end{enumerate}
\end{Def}

Next, we recall the Stanley-Reisner correspondence between square-free monomial ideals and simplicial complexes.

\begin{Def} $($\cite{Herzog2011}$)$
Given a square-free monomial ideal $\mathbf{a}\in k[x_1,...,x_n]$, let $\Delta_\mathbf{a}$ be the simplicial complex on the vertex set $x_1,...,x_n$ whose faces correspond to monomials which are not in $\mathbf{a}$. We call $\Delta_{\mathbf{a}}$ the \textit{Stanley-Reisner complex} of $\mathbf{a}$.
\end{Def}

By Proposition \ref{GB-I}, we have that the initial ideal of $I$ is a square-free monomial ideal, hence we introduce the following notation. 

\begin{Not}
For a generic GD matrix $X$, we set $\Delta_{I_r(X)}=\Delta_I:=\Delta_{\textrm{in}(I)}$, the Stanley-Reisner complex of $\IN(I)$. We will label the vertices of $\Delta_I$ using the entries of $X$. 
\end{Not}

We will abuse notation and will not distinguish between $x_{ij}$ as an entry of $X$ or as a vertex of $\Delta_I$. The meaning of $x_{ij}$ will be clear from context.

\begin{Def} A \textit{$k$-diagonal} is a set of entries of $X$ that form the main diagonal of some $k \times k$ submatrix of $X$. A \textit{$k$-chain} $x_1 < x_2 < ... < x_k$ is an ordered $k$-diagonal using the ordering on $R$.

\end{Def}

Note that a face $C$ on the vertex set $\{x_{ij}\}$ is a face of $\Delta_I$ if and only if the monomial support of $C$ is not in $\textrm{in}(I)$. That is to say, a face $C$ of $\Delta_I$ corresponds to a set of entries of $X$ whose product is not divisible by the leading term of a $r\times r$ minor of $X$. This is precisely the condition that $C$ does not contain any $r$-diagonals. This leads to the following definition.

\begin{Def}[F-Condition] \label{Def-Fk}
Let $k$ be a positive integer, then we say that a set of entries $C$, satisfies condition $F_k$ if no subset of $C$ is a $k$-diagonal. Note that if $C$ satisfies condition $F_k$, then $C$ also satisfies condition $F_{k+1}$. The faces of $\Delta_I$ correspond to exactly those sets that satisfy condition $F_k$ for some $k \leq r$.
\end{Def}

Next, we describe a technical process, which yields a particular subset of $C$, that allows for induction on condition $F_k$. Loosely speaking, this process scrapes off the top perimeter of $C$. If $C$ satisfies condition $F_k$, then after scraping, what remains will satisfy condition $F_{k-1}$.

\begin{Def} \label{Def-Order} We define a new ordering on the entries of $X$ in the following way, $$x \succ y \textrm{ iff }  \textrm{col}(x) < \textrm{col}(y) \textrm{ or  col}(x) = \textrm{col}(y) \textrm{ and row}(x) > \textrm{row}(y).$$ If $C$ is empty, then set $S(C) = C$. Otherwise, suppose $C$ is not empty, we inductively construct a subset $S(C) = \{y_1,...,y_s\} \subset C$ as follows. Choose $y_1 = \textrm{max}_{\succ}\{y \in C \}$. After choosing $y_i$, let $$B(y_i) = \{ y\neq 0 : \textrm{row}(y) \leq \textrm{row}(y_i) \textrm{ and col}(y) \geq \textrm{col}(y_i) \} \setminus \{y_i\}. $$ If $B(y_i)\cap C = \emptyset$ stop, otherwise choose $y_{i+1} = \textrm{max}_{\succ} \{ y \in B(y_i)\cap C\}$. Notice by construction, the elements of $S(C)$ are ordered in the following way, $y_1 \succ y_2 \succ ... \succ y_s$. We also have $B(y_1) \supset B(y_2) \supset ... \supset B(y_s)$, and each element in $B(y_i)$ is smaller than $y_i$.
\end{Def}
For example if $C=\{a,\cdots, m\}$ are in the positions indicated below, then $S(C)=\{j,k,l,h,c,d,a,b\}$ (bold entries). 
\[\begin{bmatrix}
 & & & & \mathbf{a}& \mathbf{b}\\
 & & & \mathbf{c}& \mathbf{d}& e\\
 & & & & f&g \\
 & & &\mathbf{h} &i & \\
\phantom{x_5} &\mathbf{j} & \mathbf{k} & \mathbf{l} &m & 
\end{bmatrix}.\]

We will refer to the above process as \textit{scraping}. Next, we prove some statements about $S(C)$.

\begin{Prop}\label{Prop-Scrape} Let $C$ be a set of entries of $X$.
\begin{enumerate}
    \item $S(C)$ satisfies condition $F_2$ and is the top left perimeter of $C$ in the following sense. For a non-zero entry $x$, if for some $y \in S(C)$, we have ${\normalfont \textrm{row}}(x) \leq {\normalfont \textrm{row}}(y)$ and ${\normalfont \textrm{col}}(x) \leq {\normalfont \textrm{col}}(y)$, then $x \notin C \setminus S(C)$. In particular, this property says that $S(C)$ is closed under ``going up in a column" or ``going left in a row". Meaning for any $y \in S(C)$ and any $x \in C$, if ${\normalfont \textrm{row}}(x) \leq {\normalfont \textrm{row}}(y)$ and ${\normalfont \textrm{col}}(x) = {\normalfont \textrm{col}}(y)$, then $x \in S(C)$, or if ${\normalfont \textrm{row}}(x) = {\normalfont \textrm{row}}(y)$ and ${\normalfont \textrm{col}}(x) \leq {\normalfont \textrm{col}}(y)$, then $x \in S(C)$. 
    \item For $k \geq 2$, if $C$ satisfies condition $F_k$, then $C \setminus S(C)$ satisfies condition $F_{k-1}$.  Conversely, if $C$ does not satisfy condition $F_k$, then $C \setminus S(C)$ also does not satisfy condition $F_{k-1}$.
\end{enumerate}
\end{Prop}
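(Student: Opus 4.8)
The whole proposition rests on one structural fact about the scraping, which I will call \emph{column closure}: if $y_j \in S(C)$ and $x \in C$ satisfy $\col(x) = \col(y_j)$ and $\row(x) \le \row(y_j)$, then $x \in S(C)$. The plan is to establish this first and deduce everything else from it, together with the two monotonicities already built into the construction, namely $\col(y_1) \le \cdots \le \col(y_s)$ and $\row(y_1) \ge \cdots \ge \row(y_s)$ (immediate from $y_{i+1} \in B(y_i)$). To prove column closure I would list the elements of $C$ in the column $\col(y_j)$ having row at most $\row(y_j)$ in order of \emph{decreasing} row, starting from $y_j$, and show by induction that the scraping selects exactly these, consecutively: as long as $B(y_\ell) \cap C$ still meets this column, that column is the smallest one occurring in $B(y_\ell) \cap C$ (since $B$ only relaxes the column bound), so $\max_{\succ}$ must pick the next element up. Hence the scraping cannot stop inside the column before reaching $x$.

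Given column closure, Part (1) is short. For the $F_2$ claim, the two monotonicities show that for $i < j$ one has $\col(y_i) \le \col(y_j)$ and $\row(y_i) \ge \row(y_j)$; a $2$-diagonal would require a pair whose rows and columns both strictly increase together, which this ordering forbids, so $S(C)$ contains no $2$-diagonal. For the perimeter statement I would argue by contradiction: suppose $x \in C \setminus S(C)$ but $\row(x) \le \row(y_i)$ and $\col(x) \le \col(y_i)$ for some $y_i \in S(C)$. If $\col(x) = \col(y_i)$ then column closure immediately forces $x \in S(C)$, a contradiction, so $\col(x) < \col(y_i)$. Taking $q$ to be the largest index with $\col(y_q) \le \col(x)$ (which exists and satisfies $q < i$ since $\col(y_1) \le \col(x) < \col(y_i)$), the monotonicities give $\row(y_q) \ge \row(y_i) \ge \row(x)$, so $x \in B(y_q) \cap C$; then $y_{q+1} = \max_{\succ}(B(y_q) \cap C)$ would have column at most $\col(x)$, contradicting the maximality of $q$. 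The two ``going up / going left'' consequences are then exactly the special cases $\col(x) = \col(y)$ and $\row(x) = \row(y)$.

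The heart of Part (2), and the step I expect to be the main obstacle, is the following \emph{NW-witness} claim: every $d \in C \setminus S(C)$ admits some $y \in S(C)$ with $\row(y) < \row(d)$ and $\col(y) < \col(d)$. To prove it, first observe $\col(y_1) < \col(d)$, for otherwise $d$ would sit in the minimal column weakly above $y_1$ and column closure would force $d \in S(C)$. Let $q$ be the largest index with $\col(y_q) < \col(d)$. I claim $\row(y_q) < \row(d)$, so that $y_q$ is the desired witness. If instead $\row(y_q) \ge \row(d)$, then $d \in B(y_q) \cap C$; if $q = s$ this contradicts $B(y_s) \cap C = \emptyset$, and if $q < s$ then $y_{q+1}$ has minimal column in $B(y_q) \cap C$, which maximality of $q$ pins to $\col(d)$, placing $y_{q+1}$ in column $\col(d)$ at or below $d$, so column closure again yields $d \in S(C)$ --- a contradiction either way. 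The careful bookkeeping here, ensuring the scraping cannot bypass the column of $d$ without first passing strictly to its upper left, is where the argument is most delicate.

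With these facts the two implications of Part (2) follow quickly. For the contrapositive of the first implication, given a $(k-1)$-diagonal in $C \setminus S(C)$ with top (smallest row and column) element $d$, I apply the NW-witness claim to $d$ and prepend the witness $y \in S(C)$; since $\row(y) < \row(d)$ and $\col(y) < \col(d)$, the result is a $k$-diagonal of $C$, so $C$ fails $F_k$. For the second implication, suppose $C$ has a $k$-diagonal. Because $S(C)$ has no $2$-diagonal, this $k$-diagonal meets $S(C)$ in at most one element; deleting that element (or any single element if the diagonal avoids $S(C)$) and using that every subset of a diagonal is again a diagonal leaves a $(k-1)$-diagonal lying entirely in $C \setminus S(C)$, so $C \setminus S(C)$ fails $F_{k-1}$.
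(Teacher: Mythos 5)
Your proposal is correct, and it takes essentially the same route as the paper's proof: both arguments turn on choosing an extremal element of the scraping sequence $S(C)=\{y_1\succ\cdots\succ y_s\}$, showing the entry in question lies in $B(\,\cdot\,)$ of that element, and contradicting the defining rule $y_{i+1}=\max_{\succ}\bigl(B(y_i)\cap C\bigr)$; your part (2) is the same north-west--witness argument for the top element of the chain, and the converse is the identical pigeonhole step using $F_2$ for $S(C)$. The only difference is organizational: you isolate column closure first (by induction up the column) and deduce the perimeter property and the witness claim from it, whereas the paper proves the perimeter property directly by a $\succ$-minimality argument and records column closure as its ``in particular.''
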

\begin{proof}
(1) It is evident from the construction of $S(C)$ that it satisfies condition $F_2$, as each $B(y_i)$ consist entirely of entries to the right and up from $y_i$. \\

Now we show the statement about $S(C)$ being the top left perimeter of $C$. Let $x$ be an entry satisfying the property in the proposition. We may assume $x \notin S(C)$, and we will show then that $x \notin C$. Consider the set $E = \{ y \in C : y \succ x \}$. We may assume $E \cap S(C)$ is not empty. For if it is empty, then $y_1$, the maximal element of $C$, is not in $E$, which means $x \succ y_1$, hence $x \notin C$. Next, we will show that for any $y \in E \cap S(C)$ that $x \in B(y)$. Let $z \in S(C)$ be minimal with respect to $\succ$ such that $\row(x) \leq \row(z)$ and $\col(x) \leq \col(z)$. By minimality of $z$, we have $y \succeq z$, and since $z \in S(C)$, we have $\row(z) \leq \row(y)$. Hence $\row(x) \leq \row(y)$. As $y \in E$, we have that $x \in B(y)$. \\

Next, let $y_t$ be the minimal element of $E \cap S(C)$. From above, we know $x \in B(y_t)$. If $y_t$ is the last element of $S(C)$, then $x \notin C$ as $B(y_t) \cap C = \emptyset$. Hence we may assume there is a $y_{t+1} \in S(C)$. By minimality of $y_t$, we have $x \succ y_{t+1}$. Finally, assume that $x \in C$, then $x \in B(y_t)\cap C$, but then $ y_{t+1} \succeq x$ so that $x = y_{t+1}$ which is a contradiction. The in particular follows immediately. \\ \newline
(2) Suppose $C\setminus S(C)$ does not satisfy condition $F_{k-1}$, then there is a $(k-1)$-chain $x_1 < ... < x_{k-1} := x$ in $C\setminus S(C)$. We want to find an element, $w \in S(C)$, such that $\col(w) < \col(x)$ and $\row(w) < \row(x)$. This would yield a $k$-chain $x_1 <... < x_{k-1} < w$ in $C$ which would be a contradiction. Consider $E = \{ y \in S(C) : y \succ x\}$, and let $y_t$ be the minimal element of $E$. By $(1)$, it must be that $\col(y_t) < \col(x)$, otherwise $x \in S(C)$. If $\row(y_t) < \row(x)$, we are done. Hence assume $\row(y_t) \geq \row(x)$, but we will see that this leads to a contradiction. Notice that $x \in B(y_t)\cap C$, as $x \notin S(C)$, hence we must have that $y_{t+1} \succ x$, but since $y_{t} \succ y_{t+1}$, this contradicts the minimality of $y_t$.\\

For the converse, let $x_1< ... <x_k$ be a $k$-chain in $C$. Then since $S(C)$ satisfies condition $F_2$, there can be at most one $x_i \in S(C)$, hence $C \setminus S(C)$ contains at least a $(k-1)$-chain.
\end{proof}
\begin{Def} Let $C$ be a set of non-zero entries of $X$, we say that $C$ is a \textit{stair} if it satisfies condition $F_2$. We now use stairs to define a covering condition on sets of non-zero entries. For $k\geq 1$, we say that $C$ is a \textit{$k$-stair}, if we can write $C = \bigcup_{j=1}^k S_j$, where each $S_j$ is a stair, and $k$ is as small as possible. For convenience, for $k \leq 0$, a $k$-stair is the empty set. Note that a non-empty set of non-zero entries of $X$ is a $k$-stair for exactly one $k$. We say that $C$ is a maximal $k$-stair if it's maximal with respect to inclusion and being a $k$-stair.\end{Def}
Returning to the example before \ref{Prop-Scrape}, $C=\{a,\cdots, m\}$ is a non-maximal $2$-stair. Clearly $C$ does not satisfy $F_2$, consider for instance $\{a,e\}\subset C$, so we need to demonstrate that $2$ stairs are sufficient to cover $C$. Take $S_1=S(C)=\{j,k,l,h,c,d,a,b\}$  and $S_2=\{m,i,f,g,e\}$. Note that $S_1$ and $S_2$ are not unique choices. Moreover $C\cup \{x_1,\cdots, x_5\}$ is a maximal $2$-stair containing $C$.
\[\begin{bmatrix}
 & & & & a& b\\
 & & & c& d& e\\
 & & & x_1 & f&g \\
 x_2 & x_3 & x_4 &h &i & \\
x_5 &j & k & l &m & 
\end{bmatrix}.\]

\begin{Prop} \label{Prop-Stairs}
Let $C$ be a set of entries of $X$.
\begin{enumerate}
    \item If $C$ is a $k$-stair, then $C$ satisfies condition $F_{k+1}$.
    \item If $C$ satisfies condition $F_k$, then $C$ is contained in a $j$-stair for some $j < k$. In particular, if $C$ maximally satisfies condition $F_{k+1}$ but does not satisfy condition $F_{k}$, then $C$ is a maximal $k$-stair.
    \item If $C$ is a maximal $k$-stair, then $C \setminus S(C)$ is a maximal $(k-1)$-stair in the submatrix obtained by removing the first row and column. In particular, if $C$ is a non-empty maximal $k$-stair, then $C$ maximally satisfies condition $F_{k+1}$ but does not satisfy condition $F_k$.
    \item All faces of $\Delta_I$ are contained in a $j$-stair for some $j < r$. In particular, $C$ is a facet of $\Delta_I$ if and only if $C$ is a maximal $(r-1)$-stair.
    \item If $C$ is a stair, then $S(C) = C$.
    \item If $C$ is a non-empty maximal $k$-stair, then $S(C)$ is a maximal stair.
\end{enumerate}
\end{Prop}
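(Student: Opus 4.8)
The plan is to verify that $S(C)$, which already satisfies condition $F_2$ and so is a stair by Proposition~\ref{Prop-Scrape}(1), cannot be properly enlarged to a stair. Any stair properly containing $S(C)$ contains $S(C)\cup\{x\}$ for some vertex $x\notin S(C)$, and a subset of a stair is again a stair; hence it suffices to show that for every non-zero entry $x\notin S(C)$ the set $S(C)\cup\{x\}$ violates condition $F_2$, i.e.\ contains a $2$-diagonal. I would treat separately the cases $x\in C$ and $x\notin C$, since the obstruction to enlargement has a different source in each. When $k=1$ the claim is immediate, as $S(C)=C$ by Proposition~\ref{Prop-Stairs}(5), so I may assume $k\ge 2$.

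For $x\in C\setminus S(C)$ I would reuse the extraction carried out in the proof of Proposition~\ref{Prop-Scrape}(2). Put $E=\{y\in S(C):y\succ x\}$, which is non-empty because $y_1\succ x$, and let $y_t$ be the $\succ$-minimal element of $E$. Part (1) forces $\col(y_t)<\col(x)$ (otherwise closedness under ``going up in a column'' would put $x$ in $S(C)$), and the scraping rule forces $\row(y_t)<\row(x)$: if instead $\row(y_t)\ge\row(x)$, then $x\in B(y_t)\cap C$, so $y_{t+1}$ exists with $y_{t+1}\succ x$, contradicting minimality of $y_t$. Thus $y_t$ lies strictly northwest of $x$, the pair $\{y_t,x\}$ is a $2$-diagonal, and $S(C)\cup\{x\}$ is not a stair. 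Geometrically this just records that every scraped-off entry sits strictly southeast of some perimeter entry of $S(C)$.

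For $x\notin C$ I would argue by contradiction from maximality of $C$. By Proposition~\ref{Prop-Stairs}(3) the set $C\setminus S(C)$ is a $(k-1)$-stair, so $C=S(C)\cup S_2\cup\dots\cup S_k$ with each $S_j$ a stair. If $S(C)\cup\{x\}$ were a stair, then $C\cup\{x\}=(S(C)\cup\{x\})\cup S_2\cup\dots\cup S_k$ would be covered by $k$ stairs, hence a $j$-stair for some $j\le k$. Since the stair number is monotone under inclusion — intersecting a stair cover of $C\cup\{x\}$ with $C$ covers $C$ by the same number of stairs — and $C$ is a $k$-stair, we also get $j\ge k$, so $C\cup\{x\}$ is a $k$-stair strictly containing $C$. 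This contradicts maximality of $C$, so $S(C)\cup\{x\}$ is not a stair.

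The essential point, and the step I expect to require the most care, is the second case: here the hypothesis that $C$ is \emph{maximal} (not merely a $k$-stair) is used, via the decomposition of Proposition~\ref{Prop-Stairs}(3) together with the monotonicity of the stair number under inclusion. The first case is largely a repackaging of the perimeter analysis already done for the scraping operator, whereas the outside-$C$ case is what actually pins down maximality of $S(C)$, so I would be careful to record the monotonicity statement explicitly before invoking it.
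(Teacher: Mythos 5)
Your proposal has a genuine gap of scope: the proposition has six parts, and you prove only part (6). Parts (1)--(5) are never addressed, and your argument for (6) explicitly invokes parts (3) and (5) of the very proposition being proved (in addition to Proposition \ref{Prop-Scrape}). That is not circular---the paper itself proves the items in order and uses (3) inside its proof of (6)---but as submitted your text is a conditional proof of the last item only. The omitted items contain most of the work: (2) and (3) require an induction on $k$ built on the scraping operation (in particular the argument that $C\setminus S(C)$ is a \emph{maximal} $(k-1)$-stair in the submatrix obtained by deleting the first row and column, which is the longest part of the paper's proof and involves adjoining the entry $w=x_{(a-1)(b-1)}$ above a putative intersection point and analyzing $(k+1)$-chains), and (4) is the facet characterization $\{\text{facets}\}=\{\text{maximal }(r-1)\text{-stairs}\}$ that the rest of the paper depends on. A complete solution must establish (1)--(5) first, in an order compatible with the dependencies you rely on.

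Within its scope, your argument for (6) is correct and is essentially the paper's own, reorganized vertex by vertex instead of through a maximal stair $E\supset S(C)$. Your case $x\in C\setminus S(C)$ reuses the extraction from the proof of Proposition \ref{Prop-Scrape}(2): the $\succ$-minimal $y_t\in S(C)$ with $y_t\succ x$ lies strictly northwest of $x$, giving the $2$-diagonal (the paper runs the same analysis on its element $z_l$ when proving $E=S(C)$; note only that from $x\in B(y_t)\cap C$ you get $y_{t+1}\succeq x$, and equality is excluded because $x\notin S(C)$, not by minimality of $y_t$). Your case $x\notin C$ is the paper's step showing $E\subset C$: the paper forms $C'=E\cup(C\setminus S(C))$, a union of $k$ stairs, and gets its contradiction via condition $F_k$ and part (1), whereas you phrase the same count through monotonicity of the stair number under inclusion---a clean observation worth recording explicitly, but not a different route. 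So the method for (6) is sound; what is missing is the other five items.
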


\begin{proof}
(1) Choose $k+1$ entries of $C$. By the pigeonhole principle, at least two of the chosen entries are contained in a stair $S$. Since $S$ satisfies condition $F_2$, they cannot be part of a $2$-diagonal, let alone a $(k+1)$-diagonal. \\

(2) We proceed by induction on $k$. The base case, when $k=2$, follows from the definition of a stair.\\

For the inductive step, suppose $C$ satisfies condition $F_k$ for some $k > 2$. We apply Proposition \ref{Prop-Scrape} to obtain subsets $S(C)\subset C$ and $C'= C\setminus S(C)$, where $S(C)$ satisfies condition $F_2$ and $C'$ satisfies condition $F_{k-1}$. By induction, $C'$ is contained in $S$, a $j$-stair for some $j < k-1$, and $S(C)$ is a stair. Hence $C = C' \cup S(C)$ is contained in $S \cup S(C) $, a $l$-stair for some $l < k$. \\ 

The in particular follows from the statement we just proved and $(1)$. \\

(3) We apply Proposition \ref{Prop-Scrape} to write $C = S(C) \sqcup C'$, where $S(C)$ satisfies condition $F_2$ and $C' = C \setminus S(C)$ satisfies condition $F_{k}$. Also by Proposition \ref{Prop-Scrape} (1), $C'$ does not meet the first row or column. Thus, we can consider $C'$ in $Y$, the submatrix obtained by removing the first row and column. Since $C'$ satisfies condition $F_{k}$, by (2), it is contained in a maximal $j$-stair, $E$, of $Y$ for some $j < k$. Back in $X$, we have $C \subset S(C) \cup E$. Now $S(C) \cup E$ satisfies condition $F_{k+1}$, otherwise, if it contains a $(k+1)$-diagonal, then at most one of the entries in the diagonal can be in $S(C)$. This implies that $E$ contains at least a $k$-diagonal, a contradiction. As $C$ is maximal, we have $C = S(C) \cup E$. Since $S(C)$ is a stair, $E$ must be a $(k-1)$-stair, otherwise $C$ could be written as a union of less than $k$ stairs.\\

It remains to show that back in $X$, $E \cap S(C) = \emptyset$ as then we would have $E = C \setminus S(C)$. Suppose not, then let $z = x_{ab}$ be the minimal element of $E \cap S(C)$ with respect to $\succ$. Since $E$ is contained in $Y$, there exists an entry $w = x_{(a-1)(b-1)}$ directly to the right and up from $z$. By Proposition \ref{Prop-Scrape}, $S(C)$ is the top perimeter of $C$, hence $w \notin C$. We will show that $A = C \cup \{w\}$ satisfies condition $F_{k+1}$, then using $(2)$, this will contradict the assumption that $C$ is a maximal $k$-stair. Suppose not, then $A$ contains a $(k+1)$-chain, $x_1 < ... < x_{k+1}$. Now this $(k+1)$-chain must contain $w$, otherwise it would be a $(k+1)$-chain completely contained in $C$, which contradicts $C$ satisfying condition $F_{k+1}$. As $w$ is above $S(C)$, we must have $x_{k+1} = w$. Now, we can replace $x_{k}$ with any element in $C$, whose row and column index is greater than that of $w$ and less than or equal to that of $x_{k}$, and preserve the chain. But $z$ is such an element, hence we may assume $x_{k} = z$. Now as $S(C)$ satisfies condition $F_2$, $x_1,...,x_{k-1}$ must be in $E$. But then $x_1 < ... <x_{k-1}$ is a $(k-1)$-chain in $E$, which is a contradiction as $E$ satisfies condition $F_{k-1}$. \\

For the in particular, from $(1)$ we only need to show that $C$ does not satisfy condition $F_k$. But this follows from induction and Proposition \ref{Prop-Scrape} (2).  \\

(4) Follows immediately from (2) and (3). \\

(5) Suppose $C \setminus S(C)$ is not empty; then by Proposition \ref{Prop-Scrape}, $C \setminus S(C)$ satisfies condition $F_{1}$, a contradiction.   \\

(6) Let $E$ be a maximal stair containing $S(C)$. We first show that $E \subset C$. By (3), $C \setminus S(C)$ is a $(k-1)$-stair considered in $X$. Let $C' = E \cup (C \setminus S(C))$, clearly $C \subset C'$. Now $C'$ is a union of $k$ stairs, for contradiction, suppose it is a $j$-stair for some $j < k$. Then $C'$ would satisfy condition $F_k$, but this would imply that $C$ satisfies condition $F_k$ which contradicts (3). Hence $C'$ is a $k$-stair. As $C$ is a maximal $k$-stair, $C = C'$ and hence $E \subset C$.\\

Next, we show that $E = S(C)$. For contradiction, suppose $E \setminus S(C)$ is not empty. Using the notation in the scraping process, let $S(C) = \{y_1,...,y_u\}$, and since $S(E) = E$, we may also write $E = \{z_1,...,z_s\}$. Let $z_l$ be the maximal element in $E \setminus S(C)$, we claim that $z_l \notin C$ which leads to a contradiction. Now after ordering $S(C) \cup \{z_l\}$ using $\succ$, there are two edge cases depending on if $z_l$ is the maximal or minimal element. If $z_l$ is the maximal element, then $z_l \succ y_1$, and since $y_1$ is the maximal element of $C$, the claim follows. If $z_l$ is the minimal element, then by construction of $S(C)$, we have $B(y_u)\cap C = \emptyset$. Since $z_l \in B(y_u)$, the claim follows.\\

Next, assume we are outside the two edge cases, in particular, $l \neq 1$ and $l \neq s$. By maximality of $z_l$, we have that $z_{l-1}$ is in $S(C)$. Write $z_{l-1} = y_t$ for some $y_t \in S(C)$. Then $z_l \in B(z_{l-1}) = B(y_t)$. Assume for contradiction that $z_l \in C$. Then $B(y_t) \cap C \neq \emptyset$. By construction of $S(C)$, there is a $y_{t+1} \in S(C)$ with $y_{t+1} \succeq z_l$. Now $y_{t+1} \in B(z_{l-1}) \cap E$, but $z_l$ is the maximal element in $B(z_{l-1}) \cap E$, hence $z_l = y_{t+1}$. This contradicts $z_l \notin S(C)$.\end{proof}

The scraping process and Proposition \ref{Prop-Stairs} (5) justifies the following description of a maximal stair. Start with an entry $x$, where $B(x)$ is empty. Then at each step pick an entry either down or to the left from the previous entry, proceed until there are no more entries down or to the left. In other words, maximal stairs are formed by maximal paths winding downwards and leftwards. This description is inverse to the scraping process, hence the proposition justifies this description of a stair, cf. \cite[Section 3]{Herzog1992}. \\

We now give a more precise description of what the facets look like.
\begin{Not}
Let $X$ be a matrix. Set $U_k(X)$ to be the set of non-zero entries in the $k \times k$ triangle in the upper right corner of $X$, i.e., $U_k(X) = \{x_{ij} \neq 0 : j - i \geq m-k \}$. 
Set $D_k(X)$ be the set of non-zero entries in the $k \times k$ triangle in the lower left corner of $X$, i.e., $D_k(X) = \{x_{ij} \neq 0: i - j \geq n-k \}$. Note that $U_k(X)$ or $D_k(X)$ may be empty.
If the matrix $X$ is clear from context we omit $X$ and just write $U_k$ and $D_k$ \end{Not}

\begin{Prop} \label{Prop-Contain-Triangle}
Let $C$ be a set of entries that satisfies condition $F_k$, then $C \cup U_{k-1} \cup D_{k-1}$ satisfies condition $F_k$. In particular, if $C$ is a facet, then $C$ contains $U_{r-1}$ and $D_{r-1}$.
\end{Prop}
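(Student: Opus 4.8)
The plan is to prove the first assertion---that adjoining $U_{k-1}$ and $D_{k-1}$ to a set $C$ satisfying condition $F_k$ preserves condition $F_k$---and then derive the ``in particular'' for facets directly. The key structural observation I would exploit is that $U_{k-1}$, the non-zero entries of the $(k-1)\times(k-1)$ upper-right triangle, can contain no $(k-1)$-diagonal, let alone a $k$-diagonal: any chain inside this triangle is forced by the shape of the triangle (and the ladder $L_2$ carving out its top) to have length at most $k-2$. Symmetrically $D_{k-1}$ contains no $(k-1)$-diagonal. So each triangle is itself a set satisfying condition $F_{k-1}$, in fact a $(k-2)$-stair (or smaller), and in particular a set whose longest chain is short.

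First I would argue by contradiction: suppose $C \cup U_{k-1} \cup D_{k-1}$ fails $F_k$, so it contains a $k$-chain $x_1 < x_2 < \cdots < x_k$. This chain is a $k$-diagonal, so its entries have strictly increasing rows and strictly increasing columns. The heart of the argument is a counting/positional analysis of how many of the $x_i$ can lie in $U_{k-1}$, how many in $D_{k-1}$, and how many in $C$. Since $U_{k-1}$ sits in the extreme upper right and $D_{k-1}$ in the extreme lower left, and since a diagonal moves down-and-right, the entries of the chain that fall in $U_{k-1}$ must be an initial segment $x_1,\dots,x_a$ (small row, large column is only compatible with being early in a down-right chain once one notes the column constraint $j-i \ge m-(k-1)$ forces large columns early) and those in $D_{k-1}$ a terminal segment $x_b,\dots,x_k$; the entries genuinely from $C$ lie in the middle. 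I would make this precise using the defining inequalities $j - i \ge m-(k-1)$ for $U_{k-1}$ and $i-j \ge n-(k-1)$ for $D_{k-1}$, combined with the strict monotonicity of rows and columns along a diagonal, to bound the length of the portion of the chain inside each triangle.

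The main obstacle, and the step requiring genuine care, is showing that after removing the triangle-portions of the chain, the surviving ``middle'' portion that lies in $C$ is already long enough to force a $k'$-diagonal in $C$ that violates $C$'s condition $F_k$. Concretely, if $a$ entries of the chain lie in $U_{k-1}$ and $k - b + 1$ lie in $D_{k-1}$, then because each triangle supports a diagonal of length at most $k-2$ one gets $a \le k-2$ and $k-b+1 \le k-2$; the difficulty is that these bounds alone do not immediately leave $k$ entries in $C$. The correct fix is to \emph{extend} the chain: rather than discarding, I would use the triangular structure to slide the endpoints of the chain \emph{into} $C$. Specifically, if $x_a \in U_{k-1}$ then the entry immediately below-and-left of the boundary of the triangle, along the diagonal, is forced to lie outside $U_{k-1}$ and can be chosen inside $C$ (or the chain simply has its $U_{k-1}$-part replaceable by the single boundary entry), and dually at the $D_{k-1}$ end. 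Carrying out this replacement shows any $k$-chain using the triangles can be converted into a $k$-chain lying entirely in $C$, contradicting $F_k$ for $C$; this conversion argument, parallel to the chain-manipulation already used in the proof of Proposition~\ref{Prop-Stairs}(3), is where the real work lies.

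Finally, the ``in particular'' is immediate: a facet $C$ is a maximal $(r-1)$-stair by Proposition~\ref{Prop-Stairs}(4), hence satisfies condition $F_r$ maximally. Applying the first part with $k = r$ shows $C \cup U_{r-1} \cup D_{r-1}$ still satisfies $F_r$, and maximality of $C$ forces $U_{r-1} \cup D_{r-1} \subseteq C$, i.e.\ $C \supseteq U_{r-1}$ and $C \supseteq D_{r-1}$.
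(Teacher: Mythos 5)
There is a genuine gap, and it starts with your opening claim. It is false that $U_{k-1}$ contains no $(k-1)$-diagonal: when $L_2$ does not cut into the triangle, the hypotenuse entries $x_{1,\,m-k+2},\ x_{2,\,m-k+3},\ \ldots,\ x_{k-1,\,m}$ all lie in $U_{k-1}$ and form a $(k-1)$-diagonal (for instance, for $k=3$ the pair $\{x_{1,m-1},x_{2,m}\}$ is a $2$-diagonal inside $U_2$). Consequently the bounds $a \le k-2$ and $k-b+1 \le k-2$ you derive from that claim are unjustified, and, as you yourself note, they would not suffice even if true. The step you then propose to close the gap --- ``sliding'' the portion of the chain lying in the triangles so that it lands inside $C$ --- cannot work: $C$ is an \emph{arbitrary} set satisfying condition $F_k$, so there is no reason the replacement entries you want to ``choose inside $C$'' actually belong to $C$. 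No conversion of a $k$-chain meeting the triangles into a $k$-chain contained in $C$ is available in general.

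The fact that makes the proposition true (and is the paper's one-line proof) is both stronger and simpler than anything in your outline: no entry of $U_{k-1}\cup D_{k-1}$ can occur in \emph{any} $k$-diagonal of the ambient matrix, regardless of what the other entries of the diagonal are. Indeed, if $x_{ij}$ sits in position $p$ of a $k$-chain, the $p-1$ earlier entries force $i \ge p$ and the $k-p$ later entries force $j \le m-(k-p)$, whence $j-i \le m-k$; this contradicts the defining inequality $j-i \ge m-(k-1)$ of $U_{k-1}$ (symmetrically for $D_{k-1}$, using $i-j \ge n-(k-1)$). Hence any $k$-diagonal in $C \cup U_{k-1} \cup D_{k-1}$ avoids both triangles entirely and therefore lies in $C$, contradicting condition $F_k$ for $C$. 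Note the contrast: your intended lemma bounds chains \emph{inside} a triangle (false, and in any case too weak), whereas the needed lemma bounds the possible position of a triangle entry inside a chain of the \emph{whole matrix} (true, and immediately sufficient). Your derivation of the ``in particular'' statement from the first assertion is fine and agrees with the paper.
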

\begin{proof}
This is clear as any entry in $U_{k-1} \cup D_{k-1}$ cannot be part of any $k$-diagonal. The in particular is also clear as $C$ is maximal with respect to satisfying condition $F_r$.
\end{proof} 

This proposition shows that if we let $X'$ be the matrix obtained from $X$ by zeroing the entries in $U_{r-1}$ and $D_{r-1}$, then $\Delta_I=\Delta_{I_r(X')} * (U_{r-1}\cup D_{r-1})$. Since $U_{r-1}\cup D_{r-1}$ is a simplex, the join is the same as an iterated cone over the vertices of $U_{r-1}\cup D_{r-1}$. Hence if we are concerned with any property that is preserved by taking cones, such as purity, we may replace $X$ with $X'$.\\

A $k$-stair $C$ can be described as the triangles $U_{k-1}$ and $D_{k-1}$ along with $k$ disjoint tendrils going from $U_{k-1}$ to $D_{k-1}$. These tendrils come from a particular stair decomposition of $C$. This is the content of the next proposition.

\begin{Prop} \label{Prop-Tendrils}
Let $C$ be a maximal $k$-stair. Then there exist a stair decomposition $C = \bigcup_{j=1}^{k} S_j$ such that $T_j = S_j \setminus (U_{k-1} \cup D_{k-1})$ are pairwise disjoint.
\end{Prop}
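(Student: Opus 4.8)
The plan is to induct on $k$ using the scraping operation: peel off $S(C)$ as the first stair, apply the inductive hypothesis to the smaller matrix obtained by deleting the first row and column, and then reconcile the triangles across this reduction. The base case $k \le 1$ is immediate, since $U_0 = D_0 = \emptyset$ (no non-zero entry $x_{ij}$ can satisfy $j - i \ge m$ or $i - j \ge n$), so the single stair $S_1 = C$ produces one tendril and pairwise disjointness is vacuous.

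For the inductive step, assume $k \ge 2$ and let $C$ be a maximal $k$-stair, necessarily non-empty. First I would set $S_1 := S(C)$, which is a maximal stair by Proposition \ref{Prop-Stairs}(6). Let $Y$ denote the submatrix of $X$ obtained by deleting the first row and first column; it is again a generalized diagonal matrix (the ladders shrink but keep their monotonicity), so all of the combinatorial machinery applies to it. By Proposition \ref{Prop-Scrape}(1) the set $C' := C \setminus S(C)$ avoids the first row and column, hence genuinely lives in $Y$, and by Proposition \ref{Prop-Stairs}(3) it is a maximal $(k-1)$-stair there. Applying the inductive hypothesis inside $Y$ yields a stair decomposition $C' = \bigcup_{j=2}^{k} S_j$ whose tendrils $T_j' := S_j \setminus (U_{k-2}(Y) \cup D_{k-2}(Y))$ are pairwise disjoint.

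The crucial bookkeeping step is to match the triangles of $X$ with those of $Y$. Writing entries of $Y$ in the ambient coordinates of $X$, deleting the first row and column decreases both matrix dimensions and the triangle index by one while preserving every difference $j - i$, giving $U_{k-1}(X) \cap Y = U_{k-2}(Y)$ and $D_{k-1}(X) \cap Y = D_{k-2}(Y)$. The entries of $U_{k-1}(X)$ and $D_{k-1}(X)$ that are lost when passing to $Y$ lie entirely in the first row and first column, respectively, and so meet none of the $S_j$ with $j \ge 2$. Consequently $T_j := S_j \setminus (U_{k-1}(X) \cup D_{k-1}(X)) = T_j'$ for each $j \ge 2$. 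Since condition $F_2$ depends only on the positions of entries, each $S_j$ is also a stair in $X$, and $C = S_1 \cup \bigcup_{j=2}^{k} S_j$ is a decomposition into exactly $k$ stairs.

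Finally I would check disjointness of all tendrils. For $j, l \ge 2$ the tendrils $T_j = T_j'$ are pairwise disjoint by the inductive hypothesis. For the remaining pairs, note $T_1 \subseteq S_1 = S(C)$ while $T_j \subseteq S_j \subseteq C \setminus S(C)$ for $j \ge 2$; as $S(C)$ and $C \setminus S(C)$ are disjoint by construction, $T_1$ meets none of the other tendrils, completing the induction. I expect the main obstacle to be not conceptual but the triangle-shift identities $U_{k-1}(X) \cap Y = U_{k-2}(Y)$ and its analogue for $D$: one must confirm that the first-row and first-column slivers removed in passing to $Y$ are precisely the triangle entries that never touch the inductively produced stairs, so that the tendrils transfer verbatim from $Y$ to $X$.
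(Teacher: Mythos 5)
Your proposal is correct and follows essentially the same route as the paper: induction on $k$, peeling off $S_1 = S(C)$ via scraping, and applying Proposition \ref{Prop-Stairs}(3) together with the inductive hypothesis in the submatrix $Y$ obtained by deleting the first row and column. The only difference is one of thoroughness — you explicitly verify the triangle-shift identities $U_{k-1}(X) \cap Y = U_{k-2}(Y)$, $D_{k-1}(X) \cap Y = D_{k-2}(Y)$ and the disjointness of $T_1$ from the remaining tendrils, details the paper's terse proof leaves implicit.
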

\begin{proof}
We proceed by induction on $k$. The base case is $k=1$ and is immediate. For the induction step, set $S_1 = S(C)$. Then by Proposition \ref{Prop-Stairs} (3), $S\setminus S(C)$ is a maximal $(k-1)$-stair in the matrix $Y$, obtained by deleting the first row and column. Hence by induction, there is a stair decomposition $S \setminus S(C) = \bigcup_{j=2}^{k} S_j$ where the $T_j$'s are pairwise disjoint. Hence $S =\bigcup_{j=1}^{k} S_j$ gives the desired stair decomposition.
\end{proof}

We will call the sets $T_j$ the \textit{tendrils} of a $k$-stair. Notice that the proof shows that a stair decomposition of a $k$-stair can be obtained by repeated scraping. The facets of $\Delta_I$ can be described as a union of $r-1$ disjoint tendrils along with $U_{r-1}$ and $D_{r-1}$. \\

\section{Dimension and Purity of $\Delta_I$}\label{Sec-Dim-Purity}

Now that we have description of the facets of $\Delta_I$, we can characterize when $\Delta_I$ is pure based on the shape of the ladders of zeroes, $L_1$ and $L_2$, in $X$. In the case that $L_1$ and $L_2$ are triangles, there is a simple formula for the dimension of $\Delta_I$. The formula is in terms of the height of the ideal of minors of a completely generic matrix, and a correction term corresponding to heights of ideals of minors of generic symmetric matrices. \\

Before proceeding, we setup some definitions. 

\begin{Def} A GD matrix is \textit{unpinched} if there is no contiguous $2 \times 2$ submatrix of the form $$\begin{bmatrix} * & 0 \\ 0 & * \end{bmatrix}.$$ Any GD matrix can be decomposed as a block diagonal matrix where the diagonal blocks are unpinched GD matrices. \\

We call a non-zero entry $x$ a \textit{corner} of $L_2$, if $B(x)$ (see \ref{Def-Order}) is empty. We call a non-zero entry $x$ a corner of $L_1$, if after transposing $X$, $B(x)$ is empty.
\end{Def}

\begin{Prop}\label{Prop-Unpinched-Purity} Let $X$ be an unpinched generic GD matrix. If $\Delta_I$ is pure, then the corners of $L_1$ are on the same diagonal or $L_1$ is contained in a $r-1 \times r-1$ triangle, and the corners of $L_2$ are on the same diagonal or $L_2$ is contained in a $r-1 \times r-1$ triangle. The converse holds when $r=2$, even if $X$ is pinched.
\end{Prop}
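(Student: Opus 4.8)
The plan is to convert purity of $\Delta_I$ into a statement about the cardinalities of maximal $(r-1)$-stairs and then read off the two corner conditions. By Proposition~\ref{Prop-Tendrils} and the repeated-scraping description following it, every facet $C$ can be written as a disjoint union $C=\bigcup_{j=1}^{r-1}S_j$, where $S_j=S(C\setminus(S_1\cup\dots\cup S_{j-1}))$ is, by Proposition~\ref{Prop-Stairs}(3) and (6), a maximal stair in the submatrix obtained by deleting the first $j-1$ rows and columns. The first step is a \emph{length formula}: following the maximal-path description after Proposition~\ref{Prop-Stairs}, a maximal stair is a unit-step monotone down-and-left path of nonzero entries whose top-right endpoint is a corner of $L_2$ (an entry with $B(\,\cdot\,)=\emptyset$) and whose bottom-left endpoint is a corner of $L_1$; hence a maximal stair running from a corner on the diagonal $\alpha=\col-\row$ to a corner on the diagonal $\beta$ has exactly $\alpha-\beta+1$ entries. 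Since $\col-\row$ is unchanged by deleting the first row and column, summing over the decomposition gives $|C|=(r-1)+\sum_{j=1}^{r-1}(\alpha_j-\beta_j)$, so that purity of $\Delta_I$ is equivalent to $\sum_j\alpha_j-\sum_j\beta_j$ being the same for every facet.

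For the forward implication I would argue contrapositively, exploiting that transposing $X$ swaps $L_1$ with $L_2$ while preserving $I_r(X)$, $\IN(I)$, and $\Delta_I$; thus it is enough to show that if the corners of $L_2$ are not all on a single diagonal and $L_2\not\subseteq U_{r-1}$, then $\Delta_I$ is not pure. The condition $L_2\not\subseteq U_{r-1}$ ensures that at least one corner of $L_2$ lies strictly below the triangle $U_{r-1}$, so that its diagonal is genuinely available as a starting diagonal $\alpha_1$ for the outermost tendril, rather than being absorbed into the cone vertices $U_{r-1}$ that every facet contains by Proposition~\ref{Prop-Contain-Triangle}. Since the corners are not all on one diagonal, two consecutive corners of $L_2$, say at positions $(a_1,b_1)$ and $(a_2,b_2)$ with $a_1<a_2$ and $b_1<b_2$, lie on distinct diagonals. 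I would then produce two facets that agree except in the top portion of their outermost tendril: in one the outermost stair begins at $(a_1,b_1)$, in the other at $(a_2,b_2)$, the two top portions descending to a common merge entry below the step and coinciding thereafter. A direct computation with the length formula shows these two tendrils differ in length by exactly $(b_1-a_1)-(b_2-a_2)\neq0$, so the two facets have different cardinalities and $\Delta_I$ is not pure.

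The main obstacle is exactly this construction: I must guarantee that each prescribed top portion can be completed, maximally and with the remaining tendrils chosen identically, to an honest maximal $(r-1)$-stair, so that the only difference between the two facets is the controlled change in the outermost tendril. This is where the \textbf{unpinched} hypothesis is essential. Unpinchedness forbids an interior block $\left[\begin{smallmatrix}*&0\\0&*\end{smallmatrix}\right]$ and hence keeps the nonzero region connected across the two ladders, so that both chosen corners feed into a common down-and-left staircase system and the inner tendrils $S_2,\dots,S_{r-1}$ are not severed by an obstruction; in a pinched matrix the corner in one block may be irrelevant to the facets supported on another block, and no size difference results. I would make the exchange precise by extending the prescribed partial stair to a maximal one using the maximal-path recipe, checking via Proposition~\ref{Prop-Scrape}(2) that scraping still lowers condition $F_k$ to $F_{k-1}$ on the modified set, so that the completed object is indeed a maximal $(r-1)$-stair and the two facets differ only where intended.

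For the converse with $r=2$ the decomposition is trivial: a facet is a single maximal stair, so the length formula applies verbatim, and every facet is a down-and-left path from a corner of $L_2$ on a diagonal $\alpha$ to a corner of $L_1$ on a diagonal $\beta$, of length $\alpha-\beta+1$. The hypothesis forces all corners of $L_2$ onto one diagonal $\alpha$ (the alternative $L_2\subseteq U_1=\{x_{1m}\}$ already leaves the top-right corners on a single diagonal, so it is subsumed) and, symmetrically, all corners of $L_1$ onto one diagonal $\beta$; hence every facet has the constant length $\alpha-\beta+1$ and $\Delta_I$ is pure. Because this argument only inspects the two endpoints of a single path and never routes one tendril past another, it never uses connectivity of the nonzero region and therefore remains valid even when $X$ is pinched, as claimed.
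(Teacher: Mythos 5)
Your $r=2$ arguments (both directions, including the remark that the converse never uses unpinchedness) are correct and are essentially the paper's own argument. The gap is in the forward direction for $r>2$, and it is fatal to the approach as written. Your reduction of purity to constancy of $\sum_j\alpha_j-\sum_j\beta_j$ needs every tendril $S_j$ of every facet to be a maximal stair of the $j$-th submatrix running corner-to-corner, and your exchange construction needs two facets that differ \emph{only} in their outermost tendril. Neither holds in general, because maximality couples the tendrils: an inner tendril can be blocked by the outer one, in which case it neither begins at a corner nor is maximal in its submatrix. Concretely, take $r=3$ and the unpinched matrix
\[ X=\begin{bmatrix} x_{11} & 0 & 0 & 0\\ x_{21} & x_{22} & x_{23} & x_{24}\\ x_{31} & x_{32} & x_{33} & x_{34}\end{bmatrix}, \]
with $L_1=\emptyset$, so the corners of $L_2$ are $(1,1)$ and $(2,4)$, on diagonals $0$ and $2$, and $L_2$ is not contained in the $2\times 2$ corner triangle. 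Here $\textrm{in}(I)=(x_{11}x_{22}x_{33},\,x_{11}x_{22}x_{34},\,x_{11}x_{23}x_{34})$, and the facets are $C_0=\{\text{all entries}\}\setminus\{x_{11}\}$, of size $8$, together with the three sets $\{x_{11},x_{21},x_{31},x_{32},x_{24}\}\cup P$ with $P\in\{\{x_{22},x_{23}\},\{x_{23},x_{33}\},\{x_{33},x_{34}\}\}$, of size $7$. For $C_0$, scraping gives the corner-to-corner outer tendril $S(C_0)=\{x_{31},x_{21},x_{22},x_{23},x_{24}\}$, but the inner tendril $\{x_{32},x_{33},x_{34}\}$ starts at $(3,4)$, which is not a corner of anything, and it is \emph{not} a maximal stair in the submatrix $Y$ obtained by deleting the first row and column: inside $Y$ it extends by $x_{24}$, an entry already occupied by the outer tendril. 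So the corner-to-corner decomposition you extract from Proposition \ref{Prop-Stairs}(3) and (6) simply fails once zeros are present, and it cannot support your length formula. Moreover, no two facets of this $\Delta_I$ differ only in their outermost tendril: any facet whose outer tendril starts at $(1,1)$ must contain $x_{24}$ (else it is extendable by $x_{24}$, since no $3$-diagonal can pass through $(2,4)$ here), hence carries $x_{24}$ in its inner tendril, while the facet whose outer tendril starts at $(2,4)$ cannot. The resulting size gap is $1$, not $|(b_1-a_1)-(b_2-a_2)|=2$ as your formula predicts. Note the example is unpinched, so your appeal to unpinchedness plus Proposition \ref{Prop-Scrape}(2) does not repair the construction; the example does not contradict Proposition \ref{Prop-Unpinched-Purity} itself ($\Delta_I$ is indeed impure), only your intermediate claims.

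This tendril coupling is exactly what the paper's proof is built to avoid. For $r>2$ the paper never manipulates tendrils: it inducts on $r$ by taking links at the faces $E$, $E'$ given by the first and by the last row-and-column, using the identification $\link_{E}(\Delta_I)=\Delta_{I_{r-1}(Y)}$ so that purity descends to the $(r-1)$-minor complexes of the two submatrices; it first rules out rectangular ladders and then propagates the same-diagonal condition by comparing the corners of $L_1$ (resp.\ $L_2$) seen in $Y$ and in $Y'$. All corner-to-corner stair counting is confined to the base case $r=2$, where a facet is a single maximal stair and no coupling can occur. To salvage your route you would have to control how all $r-1$ tendrils change \emph{simultaneously} when the starting corner of the outer one moves --- in the example above this simultaneous change is precisely what produces the sizes $7$ and $8$ --- and that appears to require an induction of the paper's sort anyway.
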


\begin{proof}
We may assume $L_1$ and $L_2$ contain triangles of size $r-1\times r-1$. Since doing so does not change any assumptions by Proposition \ref{Prop-Contain-Triangle}. \\


We first deal with the $r=2$ case. In this case, we will also show that the converse holds. When $r=2$, the facets of $\Delta_I$ are maximal stairs. The dimension of any maximal stair starting at $(a,b)$ and ending at $(c,d)$, where $(a,b) \succ (c,d)$, only depends on its end points and is equal to $$c - a + b - d + 1.$$ 

Assume $\Delta_I$ is pure. Let $x = (x_1,x_2)$ and $y = (y_1,y_2)$ be adjacent corners of $L_1$. Since $X$ is unpinched, there exist a corner $z = (z_1,z_2)$ of $L_2$ such that there exist two maximal stairs $S$ and $T$ that both start at $z$ and end at $x$ and $y$, respectively. As $\Delta_I$ is pure, the dimensions of $S$ and $T$ are equal, hence $$x_1 - z_1 + z_2 - x_2 + 1 = y_1 - z_1 + z_2 - y_2 + 1.$$
This shows $x_1 - x_2 = y_1 - y_2$ and hence they are along the same diagonal. For $L_2$, we can repeat the same calculation where we pick two maximal stairs $S$ and $T$ that end at the same point but start at two adjacent corners of $L_2$. \\

Conversely, assume that the corners of $L_1$ and $L_2$ are on the same diagonal. Any maximal stair must start at a corner of $L_2$ and end at a corner of $L_1$. But for any corner $(x_1,x_2)$ of $L_1$, we have, by assumption, that $x_1 - x_2$ is constant. Similarly, $y_1 - y_2$ is constant for $(y_1,y_2)$, a corner of $L_2$. Finally, the dimension of any maximal stair is a function of $x_1 - x_2$ and $y_1 - y_2$, hence their dimensions are equal. Notice that this argument does not require $X$ to be unpinched. \\

Now assume $r > 2$. We first show that $L_1$ and $L_2$ are not rectangles, i.e., they have more than 2 corners each. We induct on $r$. For $r=3$, suppose $\Delta_I$ is pure and for contradiction, assume that $L_1$ is a rectangle. Let $E$ be the set of all non-zero entries in the first row and column of $X$. Notice $E$ is a face of $\Delta_I$. We have that $\link_{E}(\Delta_I)=\Delta_{J}$, where $J$ is the ideal of $r-1$ minors of $Y$, the submatrix obtained by deleting the first row and column. Since $\Delta_I$ is pure, we have that $\Delta_J$ is pure. Hence from the $r=2$ case, we have that $L_1$ is a square after removing its first column. Hence $L_1$ is a $k\times k+1$ rectangle for some $k$. Similarly, let $E'$ be the set of all non-zero entries in the last row and column of $X$, then $E'$ is also a face of $\Delta_I$. We get that $\link_{E'}(\Delta_I)=\Delta_{J'}$, where $J'$ is the ideal of $r-1$ minors of the submatrix $Y'$, obtained from deleting the last row and column. By the same reasoning as before, we get that $L_1$ is a $l+1\times l$ rectangle for some $l$, contradicting the previously shown form of $L_1$.\\

For the inductive step, note that if $L_1$ is a rectangle, then either $L_1$ minus the first column or $L_1$ minus the last row is a rectangle, hence $\Delta_J$ or $\Delta_J'$ is not pure by the inductive hypothesis.\\

Repeat this argument with $L_2$ to get that $L_2$ cannot be a rectangle.\\

Now we compare $L_1$ in $Y$ and in $Y'$. Since $L_1$ has more than two corners, $L_1$ considered in $Y$ shares at least one corner with $L_1$ considered in $Y'$. Hence by the inductive hypothesis, all of the corners of $L_1$ are along the same diagonal when considered in $X$. The same argument holds for $L_2$.
\end{proof}
\begin{Prop}\label{Prop-Pure-DisjointTendril}
If all tendrils of all facets of $\Delta_I$ are of the same dimension, then $\Delta_I$ is pure. Conversely, suppose $I \neq 0$, $X$ is unpinched, and $\Delta_I$ is pure, then all tendrils of all facets of $\Delta_I$ are of the same dimension.

\end{Prop}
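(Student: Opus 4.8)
The plan is to read off the dimension of a facet from its tendrils and then to compute each tendril dimension using the diagonal coordinate $h(x)=\row(x)-\col(x)$. For the forward implication, let $C$ be a facet, i.e.\ a maximal $(r-1)$-stair. Proposition~\ref{Prop-Tendrils} together with Proposition~\ref{Prop-Contain-Triangle} expresses it as a disjoint union
\[
C=(U_{r-2}\cup D_{r-2})\sqcup\bigsqcup_{j=1}^{r-1}T_{j},
\]
so that $\dim C=\bigl|U_{r-2}\cup D_{r-2}\bigr|-1+\sum_{j=1}^{r-1}\bigl(\dim T_{j}+1\bigr)$. The size of the two corner triangles depends only on $X$ and $r$, and the number of tendrils is always $r-1$; hence if every tendril has one common dimension $d$, then $\dim C=|U_{r-2}\cup D_{r-2}|-1+(r-1)(d+1)$ is the same for every facet, and $\Delta_I$ is pure. (A single common value $d$ forces the tendrils to be either all empty or all of size $d+1$, so no facet can be longer than another.)

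For the converse I would show that every tendril crosses the same range of diagonals. A stair meets each diagonal $\{h=\text{const}\}$ in at most one entry, since two entries with equal $h$ form a $2$-diagonal and violate condition $F_2$; consequently a maximal stair running from a corner $P$ of $L_2$ to a corner $Q$ of $L_1$ crosses exactly the diagonals with $h(P)\le h\le h(Q)$, and a tendril is the part of such a stair lying in the fixed band $r-m-1\le h\le n-r+1$ obtained by deleting $U_{r-2}\cup D_{r-2}$. Thus each tendril dimension is determined by the two numbers $h(P)$ and $h(Q)$, and it suffices to prove that all relevant corners of $L_2$ lie on one diagonal $h\equiv\delta_2$ and all relevant corners of $L_1$ on one diagonal $h\equiv\delta_1$. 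For $X$ itself this is exactly Proposition~\ref{Prop-Unpinched-Purity}.

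The subtlety is that, in the scraping decomposition of Proposition~\ref{Prop-Tendrils}, only the first stair $S(C)$ is a maximal stair of $X$; the stair $S_\ell$ producing $T_\ell$ is a maximal stair of the submatrix $Y_{\ell-1}$ obtained by deleting the first $\ell-1$ rows and columns (Proposition~\ref{Prop-Stairs}(6)). I would therefore prove, for every such $Y_j$, that the corners of $L_1$ and $L_2$ still sit on the diagonals $\delta_1$ and $\delta_2$. Each $Y_j$ is again unpinched (deleting an outer row and column cannot create a pinch), and $\Delta_{I_{r-j}(Y_j)}$ is the link $\link_{E}(\Delta_{I_{r-j+1}(Y_{j-1})})$, hence again pure (purity passes to links, as used in the proof of Proposition~\ref{Prop-Unpinched-Purity}); so Proposition~\ref{Prop-Unpinched-Purity} applies to $Y_j$ and makes its corners collinear. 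Granting that the diagonal values persist, every $S_\ell$ crosses precisely the diagonals in $[\delta_2,\delta_1]$, so each tendril meets the fixed band in the same number of diagonals, and all tendrils of all facets share a single dimension.

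The hard part is precisely this persistence: that $\delta_1$ and $\delta_2$ are unchanged on passing from $X$ to $Y_1$, and inductively to every $Y_j$. A routine index computation shows the $L_2$-corner diagonal is always preserved; the $L_1$-corner diagonal is preserved \emph{unless} $L_1$ begins with a flat step $c_1=c_2$, in which case deleting the first column would expose a new $L_1$-corner on the diagonal $\delta_1-1$. I expect this to be the crux and the point where both hypotheses are indispensable: since $Y_1$ is pure and unpinched, Proposition~\ref{Prop-Unpinched-Purity} forces its $L_1$-corners to be collinear, while (in the collinear case, where the proof of that proposition shows $L_1$ has more than two corners for $r>2$, and in the triangle case, where the steps are strict) a corner of $L_1$ on $\delta_1$ survives the deletion in a later column; a simultaneous surviving corner on $\delta_1-1$ then contradicts collinearity of $Y_1$. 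This contradiction rules out the flat step, yields the diagonal preservation, and completes the argument, with the base case $r=2$ handled directly since there each facet has a single tendril equal to the facet minus the fixed set $U_1\cup D_1$.
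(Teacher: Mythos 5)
Your forward implication is fine and is essentially the paper's: it follows at once from Proposition \ref{Prop-Tendrils} (and Proposition \ref{Prop-Contain-Triangle}), since every facet is the fixed corner set together with exactly $r-1$ disjoint tendrils.

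The converse, however, has a genuine gap, located exactly where you flag ``the crux.'' Proposition \ref{Prop-Unpinched-Purity} does not conclude that the corners of $L_1$ (or $L_2$) are collinear; it concludes collinear \emph{or} contained in an $(r-1)\times(r-1)$ triangle, and your argument silently discards the second branch. In that branch both of your persistence claims fail and the contradiction you hope for does not exist. Concretely, take $n=m=4$, $r=3$, $L_2=\emptyset$, $L_1=\{x_{41},x_{42}\}$, so $c_1=c_2=1$ is a flat step. This $X$ is unpinched, $I_3(X)\neq 0$, and $\Delta_I$ \emph{is} pure: by Proposition \ref{Prop-Contain-Triangle} it is the join of a simplex with the complex of the $4\times 4$ matrix with triangles $t_1=t_2=2$, which is pure by Theorem \ref{Th-Triangle-Dim}. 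Yet the corners of $L_1(X)$ are $x_{31}$ and $x_{43}$, on \emph{different} diagonals; after deleting the first row and column the corners of $L_1(Y_1)$ are $x_{32}$ and $x_{43}$, collinear but on a different diagonal than $x_{31}$; and the $L_2$-corner moves from $x_{14}$ to $x_{24}$, so the claim that the $L_2$-diagonal is ``always preserved'' is also false (it fails whenever the innermost step of $L_2$ has height $\geq 2$, in particular whenever $L_2$ is empty; the two ladders behave symmetrically under transposition, so there is no asymmetry to exploit). Since purity, unpinchedness, the flat step, and non-collinear corners coexist here, no contradiction can rule the flat step out, and the proof collapses. What actually saves the statement in this branch is a truncation argument you never make: every corner of a ladder contained in the $(r-1)$-triangle lies on or beyond the boundary diagonal of the band cut out by $U_{r-1}\cup D_{r-1}$, so intersecting a stair with that band yields the same length no matter which such corner the stair ends at. (Relatedly, the band must be the one cut by $U_{r-1}\cup D_{r-1}$, not by $U_{r-2}\cup D_{r-2}$ as in your proposal: in the example above, the facet $\{x_{11},x_{12},x_{13},x_{14},x_{21},x_{31}\}\cup\{x_{23},x_{24},x_{33},x_{43}\}$ has scraping stairs of sizes $6$ and $4$, whose intersections with your band have sizes $5$ and $4$, whereas removing $U_{2}\cup D_{2}$ gives $3$ and $3$.)

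For contrast, the paper sidesteps this bookkeeping entirely: it first zeroes $U_{r-1},D_{r-1}$ (a join with a simplex, so purity and tendrils are unchanged), which collapses both branches of Proposition \ref{Prop-Unpinched-Purity} into ``the ladder is a triangle,'' and then inducts on $r$: the inductive hypothesis applied to $Y_1$ equalizes the tendrils of $C\setminus S(C)$, and the outer tendril $S(C)$ is compared to the inner ones via one auxiliary facet --- the frame formed by the first rows and columns together with the last row and column --- whose last-row-and-column tendril lies in $X$ itself, so Proposition \ref{Prop-Unpinched-Purity} is only ever applied to $X$, never to the submatrices $Y_j$. If you want to keep your diagonal-coordinate argument, perform the same normalization first: then collinearity is automatic, triangles have no flat steps, the corner diagonals visibly persist in every $Y_j$ with $j\leq r-2$, and your band computation finishes the proof.
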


\begin{proof}
The first statement is immediate from Proposition \ref{Prop-Tendrils}. \\

For the converse, we proceed by induction on $r$. We may assume $L_1$ and $L_2$ contain triangles of size $r-1\times r-1$. The base case when $r=2$ follows from Proposition \ref{Prop-Unpinched-Purity}. For the inductive step, let $C$ be a facet of $X$. Then as usual, we can write $C = S(C) \cup (C \setminus S(C))$, where $C \setminus S(C)$ is a facet of $\Delta_{I_{r-1}(Y)}$, where $Y$ is the submatrix obtained by removing the first row and column. By induction, the tendrils of $C \setminus S(C)$ all have the same dimension. By the proof of Proposition \ref{Prop-Tendrils}, the tendrils of $C$ are $T = S(C)$ along with the tendrils of $C \setminus S(C)$. \\

Now let $D$ be the last row and column and let $E$ be the union of the first $r-1$ rows and columns along with $D$. It is clear that $E$ is a facet of $\Delta_I$ and that $D$ is a tendril of $E$. By the same argument above, $D$ has the same dimension as any tendril of $E \setminus S(E)$ and hence it also has the same dimension as any tendril of $C \setminus S(C)$. By construction, both $D$ and $S(C)$ start and end at corner positions of $L_1$ and $L_2$, respectively. Hence by Proposition \ref{Prop-Unpinched-Purity}, since the corners of $L_1$ and $L_2$ are on the same diagonal, we have that $D$ and $S(C)$ have the same dimension.
\end{proof}

The converse is false if $X$ is pinched as shown by considering the $3 \times 3$ minors of the following matrix $$\begin{bmatrix} x_{11} & x_{12} & 0  \\ 
                  0 & x_{22} & 0  \\ 
                  0 & 0 &  x_{34}  
\end{bmatrix}.$$ The maximal $2$-stairs are $$\{x_{11},x_{12},x_{22}\},\{x_{11},x_{12},x_{34}\}, \textrm{ and } \{x_{12},x_{22},x_{34}\}.$$ Now consider $\{x_{11},x_{12},x_{22}\}$. Its two tendrils, obtained by repeatedly scraping, are $\{x_{11},x_{12}\}$ and $\{x_{22}\}$. The two tendrils clearly do not have the same dimension.

\begin{Prop} \label{Prop-Purity}
Set $X'$ to be the matrix obtained from $X$ by zeroing $U_{r-1}$ and $D_{r-1}$ and let $\Delta'$ be the complex associated to $I_r(X')$. Let $A_i$ be the diagonal blocks of an unpinched decomposition of $X'$. Assume $I \neq 0$.

\begin{enumerate}
    \item If $\Delta'$ is pure, then $\Delta_{I_2(X')}$ is pure.
    \item $\Delta_{I_2(X')}$ is pure if and only if $\Delta_{I_2(A_i)}$ are pure of the same dimension.
\end{enumerate}

\end{Prop}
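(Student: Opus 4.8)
The plan is to handle the two parts with different tools: part (2) is a structural decomposition of $\Delta_{I_2(X')}$, while part (1) descends purity through a single link operation and a vertex count.

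For part (2), the key point is that a stair in $X'$ cannot meet two distinct blocks. If $A_k$ and $A_{k+1}$ are consecutive blocks of an unpinched decomposition, the pinch separating them forces every non-zero entry of $A_k$ to lie strictly up and to the left of every non-zero entry of $A_{k+1}$; hence any entry of $A_k$ together with any entry of $A_{k+1}$ forms a $2$-diagonal. Therefore no set satisfying condition $F_2$ (Definition \ref{Def-Fk}) can use entries from two blocks, so every face of $\Delta_{I_2(X')}$ lies in a single $A_k$. Since the blocks partition the non-zero entries of $X'$ on disjoint vertex sets, this exhibits $\Delta_{I_2(X')}$ as the disjoint union of the $\Delta_{I_2(A_k)}$. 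The facets of a disjoint union are exactly the facets of its summands, so such a complex is pure precisely when every summand is pure \emph{and} all summands have the same dimension. This is the asserted equivalence.

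For part (1) I first record that the link of a pure complex is pure: the facets of $\link_E(\Delta)$ are the sets $F\setminus E$ for facets $F\supseteq E$ of $\Delta$, so if $\Delta$ is equidimensional so is its link. Let $Y$ be the submatrix of $X'$ obtained by deleting the first row and column, and let $E$ be the set of non-zero entries in the first row or column of $X'$. Exactly as in the proof of Proposition \ref{Prop-Unpinched-Purity}, $E$ satisfies $F_2$ and $\link_E(\Delta') = \Delta_{I_{r-1}(Y)}$, so purity of $\Delta'$ forces $\Delta_{I_{r-1}(Y)}$ to be pure. Now take any facet $C$ of $\Delta'$, a maximal $(r-1)$-stair. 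By Proposition \ref{Prop-Stairs}(3), $C = S(C)\sqcup(C\setminus S(C))$ with $C\setminus S(C)$ a facet of $\Delta_{I_{r-1}(Y)}$, so counting vertices gives $\dim C = \dim S(C) + \dim(C\setminus S(C)) + 1$. Purity of $\Delta'$ and of $\Delta_{I_{r-1}(Y)}$ make the two outer quantities constant, hence $\dim S(C)$ is a constant independent of $C$. Finally, by the inverse-scraping description following Proposition \ref{Prop-Stairs} (and Definition \ref{Def-Order}), every maximal stair of $X'$ arises as the first scraped layer $S(C)$ of some maximal $(r-1)$-stair: choose the stair as that first layer and complete it with $r-2$ further layers in the successively deleted submatrices, which is possible since $I_r(X')\neq 0$ — the zeroed triangles $U_{r-1}$ and $D_{r-1}$ meet no $r$-diagonal by Proposition \ref{Prop-Contain-Triangle}, so $X'$ still carries an $r$-chain. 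As the facets of $\Delta_{I_2(X')}$ are exactly the maximal stairs, they all share the common value of $\dim S(C)$, and $\Delta_{I_2(X')}$ is pure.

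The main obstacle is the surjectivity step at the end of part (1): one must verify that \emph{every} maximal stair of $X'$ is realized as $S(C)$, i.e. that the remaining $r-2$ tendrils can be placed strictly below and to the right without disturbing the chosen top-left perimeter. A maximal stair that sits inside a facet as a \emph{later} tendril rather than as $S(C)$ is not controlled by the dimension count, so the inverse-scraping description of Proposition \ref{Prop-Stairs} must be combined carefully with the bound $I_r(X')\neq 0$ to guarantee the completion exists. Note that the link argument for part (1) never uses unpinchedness, so the pinched geometry of $X'$ is harmless there; it is precisely the reason part (2), together with its ``same dimension'' clause, is needed to reassemble the information block by block for the downstream application.
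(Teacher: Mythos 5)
Your part (2) is correct and is essentially the paper's own argument: a pinch forces every non-zero entry of one block to lie strictly up-and-left of every non-zero entry of the next, so no set satisfying condition $F_2$ meets two blocks, giving $\Delta_{I_2(X')}=\bigsqcup_i \Delta_{I_2(A_i)}$, and a disjoint union of complexes is pure exactly when the summands are pure of a common dimension.

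Part (1), however, has a genuine gap, and it sits exactly at the step you flagged as the main obstacle: the claim that \emph{every} maximal stair of $X'$ arises as the first scraped layer $S(C)$ of some facet $C$ of $\Delta'$ is false, and the hypothesis $I_r(X')\neq 0$ cannot repair it. By Proposition \ref{Prop-Scrape}(1), $S(C)$ is the top-left perimeter of $C$: no element of $C\setminus S(C)$ lies weakly up-and-left of an element of $S(C)$. So a maximal stair $S$ can serve as $S(C)$ only if there is room strictly down-and-right of $S$ for the remaining $r-2$ tendrils, and stairs hugging the bottom-right corner have no such room. Concretely, let $X$ be the generic $4\times 4$ matrix and $r=3$, so $X'$ has $U_2$ and $D_2$ zeroed and $\Delta'$ is pure by Theorem \ref{Th-Triangle-Dim}; note $X'$ is unpinched, so this is not an artifact of pinching. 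The set $S=\{x_{34},x_{44},x_{43}\}$ is a maximal stair of $X'$, but every non-zero entry of $X'$ lies weakly up-and-left of $x_{44}$; hence if $x_{44}\in S(C)$, Proposition \ref{Prop-Scrape}(1) forces $C\setminus S(C)=\emptyset$, i.e.\ $C=S$, which satisfies $F_2$ and so is a $1$-stair, not a maximal $2$-stair (indeed the facets of $\Delta'$ here have six vertices, not three). Thus $S$ is the scrape of no facet, and your dimension count never constrains it: you have only shown that the maximal stairs occurring as top-left perimeters of facets share a common dimension, which is weaker than purity of $\Delta_{I_2(X')}$. The paper avoids this one-sidedness by splitting into cases: when $X'$ is unpinched it invokes Proposition \ref{Prop-Unpinched-Purity} directly (purity of $\Delta'$ forces the corners of $L_1$ and of $L_2$ onto common diagonals, and the $r=2$ converse then gives purity of $\Delta_{I_2(X')}$), and when $X'$ is pinched it inducts on $r$ using links at \emph{both} the first and the last row-and-column; the link at the last row and column is precisely what controls stairs near the bottom-right corner, which your single scraping from the top left cannot see.
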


\begin{proof}
(1) Suppose $X'$ is unpinched, then the statement follows by Proposition \ref{Prop-Unpinched-Purity}. Hence we may assume $X'$ is pinched. We induct on $r$, the base case is $r=2$ and is trivial. Now suppose $r > 2$. As before, let $E$ and $F$ by the first row and column and last row and column, respectively. Set $Y$ and $Z$ to be the submatrix obtained by deleting $E$ and $F$, respectively. Then we can consider $\link_{E}(\Delta_I)$ and $\link_{F}(\Delta_I)$ which correspond to the complex associated to the $r-1$ minors of $Y$ and $Z$, respectively. By induction, $\Delta_{I_2(Y)}$ and $\Delta_{I_2(Z)}$ are pure. Finally, notice that any stair in $X'$ is completely contained in $Y$ or $Z$, as otherwise $X$ would be unpinched. Hence every stair in $X'$ are the same dimension so that $\Delta_{I_2(X')}$ is pure. \\

(2) Any stair in $\Delta_{I_2(X')}$ corresponds to a stair in some $\Delta_{I_2(A_i)}$. In fact, $\Delta_{I_2(X')} = \bigsqcup \Delta_{I_2(A_i)}$. The statement follows immediately.
\end{proof}

We finish this section by working out a formula for the dimension of $\Delta_I$, and thus the height of $I$, in the special case where $L_1$ and $L_2$ are triangles. In the next section, we will show that $L_1$ and $L_2$ being triangles is equivalent to $\Delta_I$ being Cohen-Macaulay. \\

Recall that if $Y$ is a $n \times m$ generic matrix, then $$\textrm{ht} \; I_r(Y) = (n - r + 1)(m - r + 1) =: \textrm{EN}(n,m,r),$$ as shown in \cite{EagonNorthcott}. Also, we define $$\textrm{ENS}(n,r) := \binom{n-r+2}{2},$$ which is the height of $I_r(Y)$ when $Y$ is a generic symmetric matrix \cite{Jozefiak}.

\begin{Th} \label{Th-Triangle-Dim}
Let $L_1$ be a triangle of size $t_1$ and $L_2$ be a triangle of size $t_2$. Then $\Delta_I$ is pure with $$\dim \Delta_I = \dim R - \EN(n,m,r) + \ENS(t_1,r) + \ENS(t_2,r) - 1,$$ and thus $$\HT \; I = \EN(n,m,r) - \ENS(t_1,r) - \ENS(t_2,r).$$
\end{Th}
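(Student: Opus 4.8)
The plan is to prove the dimension formula first and deduce the height as a formal consequence. Since the minors of $X$ form a Gröbner basis (Proposition \ref{GB-I}), $\IN(I)$ is the Stanley--Reisner ideal of $\Delta_I$, so $R/\IN(I)$ is a Stanley--Reisner ring and $\dim R/I = \dim R/\IN(I) = \dim\Delta_I + 1$; hence $\HT I = \dim R - (\dim\Delta_I + 1)$. Substituting the dimension formula and cancelling the two copies of $\dim R$ and the $\pm 1$ gives $\HT I = \EN(n,m,r) - \ENS(t_1,r) - \ENS(t_2,r)$ directly. I would also record at the outset that $\dim R$ is the number of nonzero entries of $X$, namely $nm - \binom{t_1+1}{2} - \binom{t_2+1}{2}$, since each triangular ladder $L_i$ has $\binom{t_i+1}{2}$ entries.

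The first combinatorial step is to locate the corners. I claim that for a triangular $L_2$ the corners (nonzero $x_{ij}$ with $B(x_{ij}) = \emptyset$) are exactly those with $j-i = m-t_2-1$, and dually the corners of $L_1$ satisfy $i-j = n-t_1-1$. This is a short check: $x_{ij}$ is nonzero iff $j-i < m-t_2$, while the minimum of $l-k$ over the up-and-right region of $(i,j)$ excluding $(i,j)$ itself equals $j-i+1$, so forcing that region into $L_2$ requires $j-i \ge m-t_2-1$; the two inequalities pin down the diagonal. Granting this, the cardinality of any maximal stair is forced: a maximal stair runs from a corner of $L_2$ down-and-left to a corner of $L_1$, and a monotone path from $(a,b)$ to $(c,d)$ has $(c-a)+(b-d)+1$ entries (the formula recorded in the proof of Proposition \ref{Prop-Unpinched-Purity}). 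Using $b-a = m-t_2-1$ and $c-d = n-t_1-1$ this evaluates to the constant $n+m-t_1-t_2-1$, independent of which corners are joined.

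I would then establish purity and the dimension formula simultaneously by induction on $r$. The base case $r=2$ is exactly the stair count above: facets are maximal stairs, all of size $n+m-t_1-t_2-1$, and one checks this equals $\dim R - \EN(n,m,2) + \ENS(t_1,2) + \ENS(t_2,2)$. For the inductive step, let $Y$ be the submatrix obtained by deleting the first row and column; $Y$ is again a triangular GD matrix, of size $(n-1)\times(m-1)$ with triangles of sizes $t_1-1$ and $t_2-1$ (or $0$), and we pass to $(r-1)$-minors. Given a facet $C$, scraping writes $C = S(C)\sqcup(C\setminus S(C))$ where $S(C)$ is a maximal stair (Proposition \ref{Prop-Stairs}(6)) and $C\setminus S(C)$ is a facet of $\Delta_{I_{r-1}(Y)}$ (Proposition \ref{Prop-Stairs}(3),(4)). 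Since $|S(C)| = n+m-t_1-t_2-1$ is independent of $C$, and by the inductive purity of $\Delta_{I_{r-1}(Y)}$ the size $|C\setminus S(C)| = \dim\Delta_{I_{r-1}(Y)}+1$ is also independent of $C$, all facets of $\Delta_I$ share the size $|C| = |S(C)| + \dim\Delta_{I_{r-1}(Y)} + 1$, proving purity. Feeding in the inductive dimension formula for $Y$ together with $\dim k[Y] = \dim R - (n-t_1) - (m-t_2) + 1$ and the index shifts $\EN(n-1,m-1,r-1) = \EN(n,m,r)$ and $\ENS(t_i-1,r-1) = \ENS(t_i,r)$, the explicit constants cancel and yield $|C| = \dim R - \EN(n,m,r) + \ENS(t_1,r) + \ENS(t_2,r)$, whence $\dim\Delta_I = |C|-1$ is as claimed.

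The main obstacle is bookkeeping rather than conceptual. I must verify that $Y$ really is a triangular GD matrix with the stated shrunken parameters so that the inductive hypothesis applies, and that both the corner-diagonal characterization and the stair-length constant are preserved under the reduction; the crucial point is that the value $n+m-t_1-t_2-1$ is invariant under $(n,m,t_1,t_2)\mapsto(n-1,m-1,t_1-1,t_2-1)$, which is precisely what drives the cancellation. Care is also needed in the boundary cases $t_1=0$ or $t_2=0$, where the triangle fails to shrink; there one uses $\ENS(0,r) = \ENS(0,r-1) = 0$, so the identity $\ENS(t_i-1,r-1)=\ENS(t_i,r)$ and the whole computation persist unchanged.
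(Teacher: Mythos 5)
Your proposal is correct and follows essentially the same route as the paper's own proof: induction on $r$, scraping a facet as $C = S(C)\sqcup (C\setminus S(C))$ via Proposition \ref{Prop-Stairs} (3) and (6), observing that $|S(C)| = n+m-t_1-t_2-1$ is constant, and letting the shifts $\EN(n-1,m-1,r-1)=\EN(n,m,r)$ and $\ENS(t_i-1,r-1)=\ENS(t_i,r)$ drive the telescoping. The only differences are cosmetic: you re-derive the $r=2$ purity directly from the corner-diagonal characterization instead of citing Proposition \ref{Prop-Unpinched-Purity}, and you spell out bookkeeping (the value of $\dim R$, the $t_i=0$ edge case) that the paper leaves implicit.
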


\begin{proof}
It suffices to show that the dimension of each facet is $$\dim R - \EN(n,m,r) + \ENS(t_1,r) + \ENS(t_2,r) - 1.$$ 
A facet of $\Delta_I$ is a $(r-1)$-stair, so we count the size of these stairs. \\

We induct on $r$. The base case is $r=2$. By Proposition \ref{Prop-Unpinched-Purity}, $\Delta_I$ is pure. Hence we need only count the size of a specific maximal stair, namely the first row and column. One computes that the size of the first row and column is $$n + m - 1 - t_1 - t_2 = \dim R - \EN(n,m,1) + \ENS(t_1,1) + \ENS(t_2,1).$$

For the inductive step, we apply Proposition \ref{Prop-Stairs} (3) to write an $(r-1)$-stair $C$ as $C = S(C) \sqcup C'$, where $C'$ is a maximal $(r-2)$-stair in the submatrix obtained by removing the first row and column. $S(C)$ is a maximal stair by Proposition \ref{Prop-Stairs} (6), hence it is the same size as the first row and column. Let $R'$ be the ambient ring of the submatrix, then notice that $\dim R = |S(C)| + \dim R'$. Now we apply the inductive hypothesis to get $$|C| = |S(C)| + |C'| = $$
$$|S(C)| + \; \dim R' -  \EN(n-1,m-1,r-1) + \ENS(t_1-1,r-1) + \ENS(t_2-1,r-1) $$
$$= \dim R - \EN(n,m,r) + \ENS(t_1,r) + \ENS(t_2,r).$$ \end{proof} 

\section{Cohen-Macaulayness of $I_r(X)$}\label{Sec-CM}

In this section, we will characterize the configurations of zeros of a generic GD matrix $X$ so that $I = I_r(X)$ is Cohen-Macaulay. We say that a complex is Cohen-Macaulay if its corresponding ideal is Cohen-Macaulay. As we have shown in section $2$, under the lexicographical ordering, $\textrm{in}(I)$ is square-free, hence we may apply the following. 

\begin{Th}\label{Th-ICM-iff-InICM}$($\cite[1.3]{Conca2020}$)$ Let $J$ be a homogeneous ideal such that $\normalfont{\textrm{in}}(J)$ is square-free. Then $J$ is Cohen-Macaulay iff $\normalfont{\textrm{in}}(J)$ is Cohen-Macaulay iff $\Delta_J$ is Cohen-Macaulay. 
\end{Th}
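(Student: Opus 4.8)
The plan is to treat the two ``iff'' assertions separately, disposing of the cheap one first and then concentrating all effort on the genuinely deep one. The equivalence ``$\operatorname{in}(J)$ is Cohen--Macaulay iff $\Delta_J$ is Cohen--Macaulay'' I would dismiss as essentially definitional: since $\operatorname{in}(J)$ is square-free it is exactly the Stanley--Reisner ideal of $\Delta_J$, so $S/\operatorname{in}(J)$ is the face ring $k[\Delta_J]$, and a simplicial complex is by definition Cohen--Macaulay (over $k$) precisely when its face ring is (equivalently, via Reisner's criterion on the reduced cohomology of links). All the content lives in the first equivalence, ``$J$ is Cohen--Macaulay iff $\operatorname{in}(J)$ is Cohen--Macaulay.'' Here my first step is to realize $J \rightsquigarrow \operatorname{in}(J)$ as a flat degeneration: refining the term order to a weight vector produces a one-parameter family over $\mathbb{A}^1_k$ with general fiber $S/J$ and special fiber $S/\operatorname{in}(J)$, flat because the Hilbert function is constant along it. In particular $\dim S/J = \dim S/\operatorname{in}(J)$, so each side's Cohen--Macaulayness is just the statement that its depth attains this common dimension, and the entire problem collapses to comparing $\operatorname{depth} S/J$ with $\operatorname{depth} S/\operatorname{in}(J)$.

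One of the two inequalities is free and requires no hypothesis on $\operatorname{in}(J)$. Graded Betti numbers can only grow under a Gröbner degeneration, $\beta_{i,j}(S/J) \le \beta_{i,j}(S/\operatorname{in}(J))$, whence $\operatorname{pd} S/J \le \operatorname{pd} S/\operatorname{in}(J)$ and, by Auslander--Buchsbaum, $\operatorname{depth} S/J \ge \operatorname{depth} S/\operatorname{in}(J)$. Combined with the equality of dimensions this already yields ``$\operatorname{in}(J)$ Cohen--Macaulay $\Rightarrow J$ Cohen--Macaulay'': if $\operatorname{depth} S/\operatorname{in}(J) = \dim S/\operatorname{in}(J) = \dim S/J$, then $\operatorname{depth} S/J$ is squeezed between that value and $\dim S/J$, forcing equality. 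I would record this direction at once, stressing that square-freeness is not used here.

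The crux, and the reason square-freeness is hypothesized at all, is the reverse inequality $\operatorname{depth} S/\operatorname{in}(J) \ge \operatorname{depth} S/J$ needed for ``$J$ Cohen--Macaulay $\Rightarrow \operatorname{in}(J)$ Cohen--Macaulay.'' Since this is false for general initial ideals (depth can strictly drop under degeneration), any argument must feed in the square-free condition essentially. My plan is to work with graded local cohomology, using $\operatorname{depth} M = \min\{\, i : H^i_{\mathfrak{m}}(M) \ne 0 \,\}$. Semicontinuity in the flat family gives, in each internal degree, $\dim_k H^i_{\mathfrak{m}}(S/\operatorname{in}(J)) \ge \dim_k H^i_{\mathfrak{m}}(S/J)$, and the goal is to upgrade these to equalities at the smallest $i$ for which the general fiber has nonzero local cohomology, which is exactly what forbids the special fiber from picking up cohomology in lower homological degree. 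The leverage of square-freeness is that $\operatorname{in}(J)$ is then radical, so the special fiber is reduced, and this rigidity is what should pin the extremal local cohomology Hilbert functions of the two fibers together (ideally aiming at the stronger statement that all extremal Betti numbers agree in value and position, which delivers equality of depth and regularity simultaneously). The hard part will be precisely this local-cohomology comparison: the flatness and semicontinuity bookkeeping is routine, but converting reducedness of the special fiber into the rigidity that depth cannot drop is the genuinely difficult step, and it is exactly the content of the cited theorem of Conca and Varbaro.
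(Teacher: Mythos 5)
There is nothing to compare against inside the paper: this theorem is quoted with attribution as \cite[1.3]{Conca2020} and used as a black box, so the paper's ``proof'' is the citation itself. Judged on its own terms, the routine parts of your proposal are correct. The equivalence between Cohen--Macaulayness of $\operatorname{in}(J)$ and of $\Delta_J$ is indeed definitional (the paper even defines a complex to be Cohen--Macaulay via its ideal), and the implication ``$\operatorname{in}(J)$ Cohen--Macaulay $\Rightarrow J$ Cohen--Macaulay'' follows exactly as you say: flatness of the Gr\"obner degeneration, $\dim S/J = \dim S/\operatorname{in}(J)$, semicontinuity of graded Betti numbers, and Auslander--Buchsbaum give $\operatorname{depth} S/J \geq \operatorname{depth} S/\operatorname{in}(J)$, with no square-freeness needed.

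The genuine gap is the implication carrying all the content, ``$J$ Cohen--Macaulay $\Rightarrow \operatorname{in}(J)$ Cohen--Macaulay.'' You correctly observe that semicontinuity gives $\dim_k H^i_{\mathfrak m}(S/\operatorname{in}(J))_j \geq \dim_k H^i_{\mathfrak m}(S/J)_j$, which is the wrong direction (it can only create, never kill, local cohomology in the special fiber), and that one must upgrade these inequalities to equalities; but you then concede that this rigidity ``is exactly the content of the cited theorem of Conca and Varbaro.'' Since the statement you were asked to prove \emph{is} that cited theorem, deferring the crux to it is circular, and your text supplies no mechanism by which reducedness of the special fiber forces the equalities. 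In Conca--Varbaro's actual argument the new input is that Stanley--Reisner rings are (in later terminology) cohomologically full: for any ideal $\mathfrak a$ whose radical is the square-free monomial ideal $\operatorname{in}(J)$, the natural map $H^i_{\mathfrak m}(S/\mathfrak a) \to H^i_{\mathfrak m}(S/\operatorname{in}(J))$ is surjective, a fact resting on Hochster's formula and reduction to positive characteristic ($F$-purity), which is then played against the $t$-torsion structure of the local cohomology of the one-parameter family. None of this is routine ``bookkeeping,'' and none of it appears in your outline; everything you actually prove is right, but the theorem itself remains unproven.
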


It turns out that $\Delta_I$ is Cohen-Macaulay if only if the ladders of zeroes, $L_1$ and $L_2$, are triangles, or $L_1$ and $L_2$ are small. We will show later that these configurations are sufficient for $\Delta_I$ to not only be Cohen-Macaulay, but also vertex decomposable. In light of Theorem \ref{Th-ICM-iff-InICM}, this characterizes the configurations for which $I$ is Cohen-Macaulay. \\

Before proceeding we recall Reisner's Criterion for when a complex is Cohen-Macaulay.

\begin{Th}\label{Th-Reisner-Criterion}$(${\cite[8.1.6]{Herzog2011}}$)$
Let $\Delta$ be a complex over a field $k$. Then $\Delta$ is Cohen-Macaulay if and only if for every face $F \in \Delta$ and for every $i < \dim \link_{F}(\Delta)$, we have $$\widetilde{H}_i(\link_{F}(\Delta); k) = 0.$$

\end{Th}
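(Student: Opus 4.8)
The plan is to pass to the Stanley-Reisner ring $k[\Delta]$ and to detect Cohen-Macaulayness through the local cohomology modules $H^i_{\mathfrak m}(k[\Delta])$ at the graded maximal ideal $\mathfrak m$. Recall that $k[\Delta]$ is Cohen-Macaulay exactly when its depth equals its Krull dimension $d=\dim\Delta+1$; since the depth is the least index $i$ with $H^i_{\mathfrak m}(k[\Delta])\neq 0$ while $d$ is the greatest such index and $H^d_{\mathfrak m}(k[\Delta])\neq 0$ always holds, this is the same as requiring $H^i_{\mathfrak m}(k[\Delta])=0$ for every $i<d$. So the first step is to reduce the theorem to this local-cohomology vanishing.

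The engine of the argument is Hochster's formula, which I would obtain by computing $H^\bullet_{\mathfrak m}(k[\Delta])$ from the $\mathbb Z^n$-graded \v{C}ech complex on the variables $x_1,\dots,x_n$. In a fixed multidegree $\mathbf a\in\mathbb Z^n$ the corresponding strand of the \v{C}ech complex is nonzero only when $\mathbf a$ has nonpositive coordinates whose support $F=\mathrm{supp}(\mathbf a)$ is a face of $\Delta$, and there the strand is, after a homological shift, the augmented oriented chain complex computing the reduced simplicial homology of $\link_F(\Delta)$. This produces the graded isomorphism
$$H^i_{\mathfrak m}(k[\Delta])_{\mathbf a}\;\cong\;\widetilde H_{\,i-|F|-1}\big(\link_F(\Delta);k\big),\qquad F=\mathrm{supp}(\mathbf a),$$
and zero in all remaining multidegrees. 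Consequently $H^i_{\mathfrak m}(k[\Delta])=0$ if and only if $\widetilde H_{i-|F|-1}(\link_F(\Delta);k)=0$ for every face $F$.

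Combining the two steps, $\Delta$ is Cohen-Macaulay if and only if $\widetilde H_{i-|F|-1}(\link_F(\Delta);k)=0$ for every face $F$ and every $i<d$. Writing $j=i-|F|-1$ and using $d=\dim\Delta+1$, the bound $i<d$ turns into $j<\dim\Delta-|F|$, so it remains to match this with the stated bound $j<\dim\link_F(\Delta)$. For this I would invoke purity: the vanishing condition forces $\Delta$ to be pure, since a facet $G$ of dimension below $\dim\Delta$ would give $\link_G(\Delta)=\{\emptyset\}$, whose nonzero $\widetilde H_{-1}$ lies in the prohibited range $-1<\dim\Delta-|G|$ (the analogous conclusion for the stated form follows from a short induction on $\dim\Delta$ through links of vertices together with the connectivity read off from $F=\emptyset$). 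Once $\Delta$ is pure one has $\dim\link_F(\Delta)=\dim\Delta-|F|$ for every $F$, the two bounds coincide, and the case $F=\emptyset$ returns the global vanishing for $\Delta$ itself. The main obstacle is the honest proof of Hochster's formula: controlling the multigrading of the \v{C}ech complex, identifying each strand with the oriented chain complex of a link under the correct shift, and settling the orientation and sign conventions as well as the degenerate roles of the empty face and of $\widetilde H_{-1}$. Once that identification is in hand, the passage through depth and the reindexing are purely formal.
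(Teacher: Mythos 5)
This theorem is not proved in the paper at all: it is quoted verbatim from \cite[8.1.6]{Herzog2011}, so there is no in-paper argument to compare against, and your sketch is precisely the standard proof given in that cited reference --- reduce Cohen--Macaulayness of $k[\Delta]$ to the vanishing $H^i_{\mathfrak m}(k[\Delta])=0$ for $i<\dim\Delta+1$ via depth and Grothendieck vanishing/nonvanishing, then convert with Hochster's formula. The sketch is correct, and you correctly isolate the one delicate point: Hochster's formula yields vanishing of $\widetilde H_j(\link_F(\Delta);k)$ in the range $j<\dim\Delta-|F|$, which agrees with the stated range $j<\dim \link_F(\Delta)$ only once purity is known, so each side of the equivalence must first be shown to force $\Delta$ pure --- your facet argument handles the local-cohomology side, and the induction on vertex links together with connectedness (the case $F=\emptyset$) handles the stated side.
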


The following are some useful applications of Reisner's Criterion.

\begin{Cor}\label{Cor-Reisner-Cor}
Let $\Delta$ be a complex. Then

\begin{enumerate}
    \item If $\Delta$ is Cohen-Macaulay, then for every face $F \in \Delta$, $\link_F(\Delta)$ is Cohen-Macaulay.
    \item Let $F$ be a simplex whose vertex set is disjoint from that of $\Delta$, then $\Delta * F$ is Cohen-Macaulay if and only if $\Delta$ is Cohen-Macaulay.
\end{enumerate}
\end{Cor}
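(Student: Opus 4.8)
The plan is to derive both parts directly from Reisner's Criterion (Theorem \ref{Th-Reisner-Criterion}), using two elementary identities about links and joins. The first is that the link of a link is again a link: for a face $F \in \Delta$ and a face $G \in \link_F(\Delta)$, one checks straight from the definitions that
$$\link_G(\link_F(\Delta)) = \link_{F \cup G}(\Delta).$$
Indeed, $H$ lies in the left-hand side iff $H \cap G = \emptyset$ and $H \cup G \in \link_F(\Delta)$, which unwinds to $H \cap (F \cup G) = \emptyset$ and $H \cup F \cup G \in \Delta$, i.e. $H \in \link_{F \cup G}(\Delta)$ (note $F \cup G \in \Delta$ since $G \in \link_F(\Delta)$). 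The second identity is that the link of a join decomposes as a join of links: writing a face of $\Delta * F$ as $G \cup F'$ with $G \in \Delta$ and $F' \subseteq F$ (legitimate because the vertex sets are disjoint),
$$\link_{G \cup F'}(\Delta * F) = \link_G(\Delta) * \link_{F'}(F),$$
where $\link_{F'}(F)$ is the full simplex on the vertices of $F$ not in $F'$.

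For part (1), I would verify Reisner's Criterion for $\link_F(\Delta)$. A face of $\link_F(\Delta)$ is some $G$, and by the first identity $\link_G(\link_F(\Delta)) = \link_{F \cup G}(\Delta)$. Since $F \cup G$ is a face of the Cohen-Macaulay complex $\Delta$, Reisner's Criterion applied to $\Delta$ yields $\widetilde{H}_i(\link_{F \cup G}(\Delta); k) = 0$ for all $i < \dim \link_{F \cup G}(\Delta)$. As these homology groups and their dimensions coincide with those of $\link_G(\link_F(\Delta))$, the criterion is confirmed for $\link_F(\Delta)$, which is therefore Cohen-Macaulay.

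For part (2), the forward implication is immediate from part (1): taking $F$ to be the full simplex on its own vertices, a direct computation gives $\link_F(\Delta * F) = \Delta$, so if $\Delta * F$ is Cohen-Macaulay then so is $\Delta$. For the reverse, assume $\Delta$ is Cohen-Macaulay and check Reisner's Criterion for $\Delta * F$. Given a face $H = G \cup F'$ and an index $i < \dim \link_H(\Delta * F)$, the second identity gives $\link_H(\Delta * F) = \link_G(\Delta) * S$, where $S$ is the simplex on the vertices of $F$ not in $F'$. If $F' \subsetneq F$, then $S$ is a nonempty simplex, so the join is a cone, hence contractible, and all of its reduced homology vanishes. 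If $F' = F$, then $S$ is trivial and $\link_H(\Delta * F) = \link_G(\Delta)$, whose reduced homology below its dimension vanishes precisely because $\Delta$ is Cohen-Macaulay. In either case the required vanishing holds, so $\Delta * F$ is Cohen-Macaulay.

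The two identities are purely formal set-theoretic checks, so the genuine content lies in the homological input. The step I expect to demand the most care is the reverse direction of part (2): one must argue that the reduced homology of $\link_G(\Delta) * S$ vanishes, which rests on a join with a nonempty simplex being contractible (equivalently, via the join Künneth formula, on tensoring with the acyclic reduced homology of a simplex). One should also track dimensions carefully so that the index range $i < \dim \link_H(\Delta * F)$ in Reisner's Criterion is matched correctly in the boundary case $F' = F$.
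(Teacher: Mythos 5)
Your proof is correct: the paper states this corollary without any written proof, presenting it as a standard consequence of Reisner's Criterion (Theorem \ref{Th-Reisner-Criterion}), and your derivation via the identities $\link_G(\link_F(\Delta)) = \link_{F \cup G}(\Delta)$ and $\link_{G \cup F'}(\Delta * F) = \link_G(\Delta) * \link_{F'}(F)$ is exactly the standard argument the paper is implicitly invoking. Your attention to the cone/contractibility point in the reverse direction of part (2) and to the dimension bookkeeping in the boundary case $F' = F$ fills in the details correctly.
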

We now provide necessary conditions for $\Delta_I$ to be Cohen-Macaulay.
\begin{Prop}\label{Prop-Nec-CM}

Suppose $I \neq 0$ and $\Delta_I$ is Cohen-Macaulay, let $X'$ be the matrix obtained from zeroing $U_{r-1}$ and $D_{r-1}$, then 

\begin{enumerate}
    \item $X'$ is unpinched or $X'$ is a diagonal matrix. 
    \item $L_1$ is a triangle or contained in a triangle of size $r-1\times r-1$, and $L_2$ is a triangle or contained in a triangle of size $r-1\times r-1$.
\end{enumerate}

\end{Prop}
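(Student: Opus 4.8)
The strategy throughout is to combine two standard facts---Cohen--Macaulay complexes are pure, and links of Cohen--Macaulay complexes are again Cohen--Macaulay (Corollary \ref{Cor-Reisner-Cor}(1))---with the link reduction $\link_E(\Delta_{I_r(X)}) = \Delta_{I_{r-1}(Y)}$ (deleting the first, or last, row and column) that already drives the proofs of Propositions \ref{Prop-Unpinched-Purity} and \ref{Prop-Pure-DisjointTendril}. First I would record the reduction to $X'$: by Proposition \ref{Prop-Contain-Triangle} every facet contains $U_{r-1}\cup D_{r-1}$, so $\Delta_I = \Delta_{I_r(X')} * (U_{r-1}\cup D_{r-1})$ is an iterated cone, and by Corollary \ref{Cor-Reisner-Cor}(2) it is Cohen--Macaulay if and only if $\Delta_{I_r(X')}$ is. Hence we may assume $L_1, L_2$ already contain the $(r-1)\times(r-1)$ triangles; if $L_1$ (resp. $L_2$) is no larger than such a triangle we are in the permitted case of (2), so it remains only to exclude ladders that properly contain a triangle without being one.

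For (1) I would induct on $r$ and peel down to $r=2$. At $r=2$, Proposition \ref{Prop-Purity}(2) identifies $\Delta_{I_2(X')}$ with the disjoint union $\bigsqcup_i \Delta_{I_2(A_i)}$ over the unpinched blocks; if $X'$ is pinched but not diagonal some block $A_i$ is larger than $1\times 1$, so this disjoint union is disconnected and of dimension at least $1$, forcing $\widetilde H_0 \neq 0$ strictly below its dimension and contradicting Reisner's criterion (Theorem \ref{Th-Reisner-Criterion}) applied to the empty face. For $r>2$ one passes to $\link_E$ and $\link_{E'}$, which are the Cohen--Macaulay complexes of $I_{r-1}$ of the two submatrices obtained by deleting the first, resp. last, row and column; choosing the deletion that preserves a pinch together with a nontrivial block reduces $r$ while keeping the matrix pinched and non-diagonal, and the induction closes.

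For (2), since Cohen--Macaulayness gives purity, Proposition \ref{Prop-Unpinched-Purity} (using that (1) makes $X'$ unpinched, the diagonal case handled directly) already puts the corners of $L_1$ on a single diagonal; the real task is to upgrade ``corners on one diagonal'' to ``triangle.'' I would again induct on $r$: the links along the first and last row/column are the Cohen--Macaulay complexes of $I_{r-1}$ of submatrices $Y, Y'$, in each of which the inductive hypothesis makes $L_1$ a triangle, and the two overlapping windows pin down $L_1$ as a triangle in $X$ exactly as the rectangle argument does in Proposition \ref{Prop-Unpinched-Purity}. The base case $r=2$ is the crux. Here $\Delta_{I_2}$ is the order complex of the ``stair poset'' $P$ of nonzero entries (with $x \preceq y$ when $\row(x)\le\row(y)$ and $\col(x)\ge\col(y)$). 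If $L_1$ has aligned corners but is not a triangle, its boundary must contain a reentrant step, namely a repeated value $c_i=c_{i+1}$ adjacent to a drop; this yields a nonzero entry $x$ whose strict up-set $P_{>x}$ is disconnected (the $L_1$-zeros separate the ``leftward'' and ``downward'' continuations), while the repeated column supplies an extra entry in some facet through $x$. Taking $F=\{z,x\}$ for a suitable $z\prec x$ chosen so that the open interval $(z,x)$ is itself disconnected, the link $\link_F = \Delta((z,x))*\Delta(P_{>x})$ is homotopy equivalent to the suspension $\Sigma\,\Delta(P_{>x})$ and hence carries $\widetilde H_1\neq 0$, whereas the extra entry forces $\dim\link_F\ge 2$; thus $\widetilde H_1(\link_F)\neq 0$ strictly below $\dim\link_F$, contradicting Theorem \ref{Th-Reisner-Criterion}. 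The transpose handles $L_2$.

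The main obstacle is precisely this base case: one must show the \emph{dimension mismatch} is real. For a genuine triangle the staircase is tight, so the analogous disconnection of $P_{>x}$ can only occur together with $\dim\link_F=1$, where $\widetilde H_1$ sits in the top dimension and is harmless; a non-triangular (reentrant) step is exactly what produces a facet through $F$ that is one longer than the dimension in which the link's homology is concentrated. Locating the correct entry $x$, verifying the disconnectedness of $P_{>x}$, and certifying the strict inequality $\dim\link_F > 1$ for every aligned-but-non-triangular configuration is the delicate combinatorial heart of the argument; the remaining steps are routine cone, join, and link bookkeeping.
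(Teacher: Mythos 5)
Your overall architecture agrees with the paper's: reduce to $X'$ via Proposition \ref{Prop-Contain-Triangle} and Corollary \ref{Cor-Reisner-Cor}(2), induct on $r$ by taking links along the first/last row and column, use purity (Proposition \ref{Prop-Unpinched-Purity}) to align corners, and kill bad configurations in the base case $r=2$ by producing a face whose link violates Reisner's criterion. However, both places where you deviate contain genuine gaps. In the inductive step of (1), you assert that one can always ``choose the deletion that preserves a pinch together with a nontrivial block.'' This is not justified, and there are block configurations where it fails: if the unpinched decomposition of $X'$ has exactly two blocks, deleting one row/column pair may erase the pinch entirely (when the outer block is $1\times 1$), while the other deletion may leave a diagonal matrix (when the remaining block is $2\times 2$), so neither link keeps the matrix pinched and non-diagonal and your induction does not close. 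The paper runs the induction differently: it applies the inductive hypothesis to $Y$ to conclude $Y$ is unpinched \emph{or} diagonal, and then invokes Proposition \ref{Prop-Purity} (available because Cohen--Macaulay implies pure) to propagate block sizes --- if any block of $X'$ is $1\times 1$ then all blocks are, so $X'$ is diagonal; otherwise $X'$ is unpinched. That purity step is the ingredient your argument is missing.

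The more serious gap is the base case of (2), which you yourself flag as unresolved. Your face $F=\{z,x\}$ has the link you claim, $\link_F(\Delta)=\Delta((z,x))*\Delta(P_{>x})$, only if $P_{<z}=\emptyset$, i.e.\ $z$ is a minimal element of the poset (a corner of $L_2$); otherwise the factor $\Delta(P_{<z})$ enters the join, and whenever it is contractible (the typical situation) the entire link is contractible and Reisner gives no obstruction. But with $z$ forced to be a corner of $L_2$, the open interval $(z,x)$ is disconnected essentially only when $z$ is the diagonal neighbour $(\row(x)-1,\col(x)+1)$ of $x$: for any larger rectangle between $z$ and $x$ the interval is connected (one walks along rows and columns), so $\Delta((z,x))\not\simeq S^0$ and the suspension/$\widetilde{H}_1$ mechanism collapses. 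Since corners of $L_2$ are in general nowhere near the reentrant corner $x$ of $L_1$, the construction cannot be carried out. The paper's fix is to absorb the entire upper part of the poset into the face: take $F$ to be a full saturated path from a corner of $L_2$ down to $x$, the top-right entry of the $3\times 3$ configuration with a $2\times 2$ zero block that a non-triangular $L_1$ exhibits along its border. Then $\link_F(\Delta)$ is exactly $\Delta(P_{>x})$, namely the two disjoint simplices of entries left of $x$ in its row and below $x$ in its column, each of dimension at least $1$ (this is what the $3\times 3$ window guarantees); so the link is disconnected in dimension $>0$, and only $\widetilde{H}_0$ is needed. Your poset-theoretic picture --- reentrant step, disconnected up-set $P_{>x}$, and the prior use of purity to align corners, a point on which you are in fact more explicit than the paper --- is the right mechanism, but $F$ must be enlarged from a pair to a full chain for it to produce the contradiction.
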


\begin{proof}

Let $\Delta'$ be the complex for $X'$. By Proposition \ref{Prop-Contain-Triangle}, we have that $\Delta_I = \Delta' * (U_{r-1}\cup D_{r-1})$, hence $\Delta'$ is Cohen-Macaulay by Corollary \ref{Cor-Reisner-Cor} (2). \\

$(1)$ Suppose $r=2$. Let $A_i$ be the diagonal blocks in the unpinched decomposition of $X'$. We have that $\Delta' = \bigsqcup \Delta_{I_2(A_i)}$. If $\dim \Delta' > 0$, then since $\Delta'$ is connected, there is only one $A_i$. If $\dim \Delta' = 0$, then $\dim \Delta_{I_2(A_i)} = 0$, hence $X'$ is a diagonal matrix. Now for $r > 2$, we let $E$ be the first row and first column and consider $\Delta'' = \link_{E}(\Delta')$. $\Delta''$ is the complex of the $r-1$ minors of a submatrix $Y$, obtained by deleting the first row and column of $X'$. By Corollary \ref{Cor-Reisner-Cor}, $\Delta''$ is Cohen-Macaulay, hence by induction, $Y$ is unpinched or a diagonal matrix. \\

Suppose $Y$ is a diagonal matrix. As $r > 2$ and $I \neq 0$, $Y$ is at least a $2 \times 2$ matrix, this implies that $X$ has a $1 \times 1$ block in its unpinched decomposition. Now by Proposition \ref{Prop-Purity}, $\Delta_{I_2(X')}$ is pure, hence if one block of $X$ is $1 \times 1$ then all blocks of $X$ are $1 \times 1$.\\

Now suppose $Y$ is unpinched. If $X$ is pinched, then $X$ has block form $$\begin{bmatrix} A & 0 \\ 0 & Y \end{bmatrix},$$ where $A$ is a $1 \times 1$ matrix. But by the same reasoning above, this would mean that all of the blocks of $X$ are $1 \times 1$ which implies that $Y$ is a diagonal matrix. But since $Y$ is at least $2 \times 2$, $Y$ would be pinched, a contradiction.\\

$(2)$ We may replace $X$ by $X'$ and $\Delta$ by $\Delta'$ to assume $L_1$ and $L_2$ contain triangles of size $r-1\times r-1$.  We will show that $L_1$ and $L_2$ are triangles. We are done if $X'$ is a diagonal matrix, hence by $(1)$, we may assumed $X'$ is unpinched.\\

We proceed by induction on $r$. The base case is $r=2$. For contradiction, assume one of $L_1$ or $L_2$ is not a triangle. We may transpose the matrix to assume $L_1$ is not a triangle. Using Theorem \ref{Th-Reisner-Criterion}, it is sufficient to find a face $F$ such that $\link_{F}(\Delta)$ is disconnected of dimension bigger than $0$. This is equivalent to finding a face $F$ which is properly contained in exactly 2 facets $F_1$ and $F_2$ such that $F_1 \cap F_2 = F$, and $\dim F_1 \setminus F > 0$ or $\dim F_2 \setminus F > 0 $.  By $(1)$, $X$ is unpinched, and as $L_1$ is not a triangle, the following submatrix must appear along the border of $L_1$, $$ \begin{bmatrix} * & * & * \\ 0 & 0 & * \\ 0 & 0 & * \end{bmatrix}.$$

We construct $F$ by taking any path from a corner of $L_2$ and ending at the top right entry of the submatrix. By the remark after Proposition \ref{Prop-Stairs}, $F$ satisfies condition $F_2$. $F$ and can be completed into a maximal stair by adding entries either to the left of, or below, the top right entry. Hence there is exactly $2$ ways to complete $F$ into a stair, since if we add the entry to the left, then we must add all entries to the left, similarly with the entry below. This yields two facets $F_1$ and $F_2$ containing $F$. It is clear that the dimension after deleting $F$ from both of them is at least $1$. \\

For the inductive step, let $E$ be the face consisting of the first row and column of $X$, and set $\Delta' = \link_{E}(\Delta_I)$. As before, $\Delta'$ is the complex associated to the $r-1$ minors of a submatrix $Y$, obtained from $X$ by deleting the first row and first column. By induction, both blocks of zeros of $Y$ are triangles. But now, we are done as we can repeat the argument with $E$ being the last row and last column.
\end{proof}

Before we proceed, we provide a definition.

\begin{Def}\label{Def-Vert-Decomp}  (\cite[16.41]{Miller2005})
A complex $\Delta$ is \textit{vertex decomposable} if $\Delta = \{ \emptyset \}$ or it is pure and has a vertex $v$ such that $\del_v(\Delta)$ and $\link_v(\Delta)$ are vertex decomposable.

\end{Def}

Next, we need a lemma about vertex decomposability and joins of complexes.

\begin{Lemma} \label{Lemma-Cone-Over-VD}$($\cite[2.4]{Provan1980}$)$
Let $\Delta$ and $\Sigma$ be complexes. Then $\Delta$ and $\Sigma$ are vertex decomposable  if and only if $\Sigma * \Delta$ is vertex decomposable.
\end{Lemma}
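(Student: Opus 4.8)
The plan is to induct on the total number of vertices of $\Sigma * \Delta$, which equals the sum of the vertex counts of $\Sigma$ and $\Delta$. The induction rests on two structural facts about joins that I would establish first. The facets of $\Sigma * \Delta$ are exactly the sets $F \cup G$ with $F$ a facet of $\Sigma$ and $G$ a facet of $\Delta$, with $\dim(F \cup G) = \dim F + \dim G + 1$; consequently $\Sigma * \Delta$ is pure if and only if both $\Sigma$ and $\Delta$ are pure. Second, deletion and link commute with the join: since the vertex set of $\Sigma * \Delta$ is the disjoint union of the two factor vertex sets, every vertex $v$ lies in exactly one factor, say $\Delta$, and then
$$\del_v(\Sigma * \Delta) = \Sigma * \del_v(\Delta) \qquad \text{and} \qquad \link_v(\Sigma * \Delta) = \Sigma * \link_v(\Delta).$$
Both identities follow by writing a face of the join in the canonical form $F \cup G$ with $F \in \Sigma$ and $G \in \Delta$ and checking membership directly against the definitions of $\del$ and $\link$; because $v \notin F$ automatically, the conditions involving $v$ constrain only the $\Delta$-part $G$.

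For the forward direction, suppose $\Sigma$ and $\Delta$ are vertex decomposable. If either factor equals $\{\emptyset\}$, then $\Sigma * \Delta$ reduces to the other factor and is vertex decomposable; this disposes of the base case. Otherwise $\Delta$ (say) is pure with a shedding vertex $v$, so $\del_v(\Delta)$ and $\link_v(\Delta)$ are vertex decomposable. By the first fact $\Sigma * \Delta$ is pure, and by the commuting identities its deletion and link at $v$ are $\Sigma * \del_v(\Delta)$ and $\Sigma * \link_v(\Delta)$. Each is a join of two vertex decomposable complexes on strictly fewer vertices, hence vertex decomposable by the inductive hypothesis. Thus $v$ is a shedding vertex of $\Sigma * \Delta$, as required by Definition \ref{Def-Vert-Decomp}.

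For the converse, suppose $\Sigma * \Delta$ is vertex decomposable. If it equals $\{\emptyset\}$, then neither factor has a vertex, so both are $\{\emptyset\}$ and are vertex decomposable. Otherwise $\Sigma * \Delta$ is pure with a shedding vertex $v$, and the first fact forces both $\Sigma$ and $\Delta$ to be pure. The vertex $v$ lies in exactly one factor, say $\Delta$, and the commuting identities identify $\del_v(\Sigma * \Delta)$ and $\link_v(\Sigma * \Delta)$ with $\Sigma * \del_v(\Delta)$ and $\Sigma * \link_v(\Delta)$, both vertex decomposable. Since each of these joins has strictly fewer vertices, the inductive hypothesis yields that $\Sigma$, $\del_v(\Delta)$, and $\link_v(\Delta)$ are all vertex decomposable. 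Then $\Delta$ is pure with shedding vertex $v$, hence vertex decomposable, and $\Sigma$ is vertex decomposable as well.

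The genuinely delicate parts are the bookkeeping around the base cases and purity rather than any deep geometry: one must treat $\{\emptyset\}$ as an explicit base case of the induction instead of pretending it has a shedding vertex, and one must verify that deleting $v$ from a factor strictly lowers the total vertex count so the induction is well-founded. The commuting identities are the crux of the argument, but they are routine once faces are written as $F \cup G$; the only point requiring attention is confirming that $v$ belongs to a single factor, which is precisely the disjointness of vertex sets built into the definition of the join.
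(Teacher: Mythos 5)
Your proof is correct, but note that the paper does not actually prove this lemma: it is imported verbatim from \cite[2.4]{Provan1980}, together with the remark that the notion of vertex decomposability used there differs from Definition \ref{Def-Vert-Decomp} and is reconciled with it via \cite{Provan1977}. Your argument is therefore a genuinely different, self-contained route: a simultaneous induction on the total number of vertices, driven by two structural facts about joins --- that the facets of $\Sigma * \Delta$ are precisely the sets $F \cup G$ with $F$ a facet of $\Sigma$ and $G$ a facet of $\Delta$ (whence the join is pure if and only if both factors are), and that for a vertex $v$ of $\Delta$ one has $\del_v(\Sigma * \Delta) = \Sigma * \del_v(\Delta)$ and $\link_v(\Sigma * \Delta) = \Sigma * \link_v(\Delta)$. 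Both facts are correct, following from the unique decomposition of a face $H$ of the join as $(H \cap V(\Sigma)) \cup (H \cap V(\Delta))$, which is exactly the disjointness built into the definition of the join; moreover, each direction of your equivalence invokes only the same direction of the inductive hypothesis on joins with strictly fewer vertices, and the $\{\emptyset\}$ cases are treated explicitly, so the induction is well-founded and complete. What your approach buys is that it works directly with the Miller--Sturmfels definition the paper actually uses, making the appeal to the equivalence of definitions unnecessary; what the citation buys is brevity, plus the statement in Provan--Billera's stronger shedding-vertex formulation (your identities would transfer that extra condition too, since a facet of $\Sigma * \del_v(\Delta)$ is a facet of $\Sigma * \Delta$ exactly when its $\Delta$-component is a facet of $\Delta$, though under the paper's definition you never need this).
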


Note that the definition of vertex decomposability in \cite{Provan1980} is not the same as our definition. However, they have been shown to be equivalent, see \cite{Provan1977}. Observe that simplices are vertex decomposable. Hence we will apply, quite often, the above lemma with $\Sigma$ a simplex. \\

Proposition \ref{Prop-Contain-Triangle} tells us that $\Delta_I = (U_{r-1}\cup D_{r-1}) * \del_{U_{r-1}\cup D_{r-1}}(\Delta_I)$. If we zero the entries in $U_{r-1}$ and $D_{r-1}$, then we obtain a generic GD matrix $Y$ and we see that $\Delta_{I_r(Y)} = \del_{U_{r-1}\cup D_{r-1}}(\Delta_I)$. As we've seen from the Lemma \ref{Lemma-Cone-Over-VD}, forming a cone with a simplex preserves vertex decomposability. Hence in the context of vertex decomposability, we may employ the above lemma to assume that $L_1$ and $L_2$ contain triangles of size $r-1\times r-1$.\\

By \cite{Miller2005} for pure simplicial complexes,$$
\textrm{vertex decomposable} \Rightarrow \textrm{shellable} \Rightarrow \textrm{Cohen-Macaulay}.
$$

When $X$ is a generic matrix, $\Delta_I$ has already been shown, by numerous people, to be shellable \cite{Bjorner1980}, \cite{Conca2003}, \cite{Herzog1992}. As pure shellable complexes are Cohen-Macaulay \cite[13.45]{Miller2005}, the converse to Proposition \ref{Prop-Nec-CM} is already known in the setting where $L_1$ and $L_2$ are empty. When $L_1$ and $L_2$ are triangles, our complexes can be realized as a special case of higher order complexes, which were shown to be shellable in \cite[Section 7]{Bjorner1980}. \\

We will show that $\Delta_I$ is, in fact, vertex decomposable. We are unaware of any other proof that $\Delta_I$ is vertex decomposable when $L_1$ and $L_2$ are triangles, empty or otherwise.\\

The following theorem is a strong converse to Proposition \ref{Prop-Nec-CM}. 

\begin{Th}\label{Th-Triangle-VD}Suppose $L_1$ is a triangle or contained in a $r-1 \times r - 1$ triangle, and $L_2$ is also a triangle or contained in a $r-1 \times r - 1$ triangle. Then $\Delta_I$ is vertex decomposable.
\end{Th}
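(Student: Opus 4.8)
The plan is to prove vertex decomposability by a double induction on $r$ and on the number of nonzero entries of $X$, peeling off the top-left corner at each stage. First, using the remark preceding the theorem together with Lemma \ref{Lemma-Cone-Over-VD}, I would reduce to the case where $L_1$ and $L_2$ are genuine triangles of sizes $t_1,t_2\geq r-1$: zeroing the entries of $U_{r-1}\cup D_{r-1}$ exhibits $\Delta_I$ as the join of a simplex with the complex of the reduced matrix, and joins with simplices preserve vertex decomposability. Purity, the first requirement of Definition \ref{Def-Vert-Decomp}, is then supplied by Theorem \ref{Th-Triangle-Dim}. It remains to produce a shedding vertex $v$ with $\del_v(\Delta_I)$ and $\link_v(\Delta_I)$ vertex decomposable; I would take $v=x_{11}$, the top-left-most nonzero entry, with the induction on $r$ bottoming out at $r=1$, where $\Delta_{I_1}=\{\emptyset\}$ is vertex decomposable by definition.

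The link of $x_{11}$ is clean. Since $(1,1)$ is minimal in both coordinates, any $r$-diagonal through $x_{11}$ consists of $x_{11}$ together with an $(r-1)$-diagonal lying strictly below and to the right, i.e. in the submatrix $Y$ obtained by deleting the first row and column; moreover a short case check (a diagonal meets the first row or column in at most one entry) shows that an $r$-diagonal avoiding $x_{11}$ also forces an $(r-1)$-diagonal in $Y$. Consequently a face $G$ lies in $\link_{x_{11}}(\Delta_I)$ exactly when $G\cap Y$ contains no $(r-1)$-diagonal, with no constraint on the entries of $G$ in the first row or column. Writing $W$ for the nonzero entries of the first row and column other than $x_{11}$, this says $\link_{x_{11}}(\Delta_I)=\Delta_{I_{r-1}(Y)}*\sigma_W$, where $\sigma_W$ is the full simplex on $W$. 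The matrix $Y$ is again a generic GD matrix whose triangles have sizes $t_1-1,t_2-1\geq r-2$, so by the inductive hypothesis on $r$ its complex $\Delta_{I_{r-1}(Y)}$ is vertex decomposable, and Lemma \ref{Lemma-Cone-Over-VD} finishes the link.

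The deletion is the crux and forces me to enlarge the inductive class. Combinatorially $\del_{x_{11}}(\Delta_I)$ is the complex whose faces avoid the $r$-diagonals of the matrix $X_v$ obtained from $X$ by zeroing the $(1,1)$ entry; but $X_v$ is no longer GD, since the new zero sits in the top-left corner rather than in one of the two ladders. I would therefore prove the \emph{stronger} statement that $\Delta_{I_r(X)}$ is vertex decomposable for every matrix carrying $L_1,L_2$ as before together with a third staircase $L_3$ of zeros in the top-left corner, and induct on the number of nonzero entries. The facet description of Proposition \ref{Prop-Stairs} and the containment of Proposition \ref{Prop-Contain-Triangle} are purely combinatorial and survive verbatim for such supports. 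For the shedding vertex one takes the top-left-most nonzero entry $v$, now the inner corner of $L_3$; the link computation above generalizes, splitting $\link_v$ as a simplex on the hook of $v$ joined with $\Delta_{I_{r-1}}$ of the down-right submatrix, which no longer meets $L_3$ and so lies back in the triangle class handled by the induction on $r$. The deletion $\del_v$ merely enlarges $L_3$ by one box, decreasing the nonzero-entry count, and is vertex decomposable by the induction on the number of entries.

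The main obstacle is purity for this enlarged class, which Definition \ref{Def-Vert-Decomp} requires at every node of the recursion. I would establish it by showing that every maximal $(r-1)$-stair on such a support has the same cardinality, via a stair-length count generalizing the proof of Theorem \ref{Th-Triangle-Dim} and the tendril argument of Proposition \ref{Prop-Pure-DisjointTendril}: the top-left ladder $L_3$ should contribute a third correction term of $\ENS$-type, and the scraping of Proposition \ref{Prop-Stairs}(3) reduces the count for $r$ to that for $r-1$ on the submatrix with all three triangles shrunk. The recursion terminates once $L_3$ has grown large enough that the support degenerates — either $L_3$ meets $L_1$ or $L_2$, producing a pinch that splits the complex, or the top-left triangle reaches size $r-1$ and the remaining vertices cone off — bottoming out at a simplex or the empty complex. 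Verifying that purity is preserved and that the clean link persists for a general top-left corner is where the real work lies; everything else is a rerun of the $x_{11}$ computation.
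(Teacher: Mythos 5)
Your first two paragraphs are sound and, in fact, match the paper's strategy: the reduction to honest triangles via Proposition \ref{Prop-Contain-Triangle} and Lemma \ref{Lemma-Cone-Over-VD} is the paper's reduction, the identification $\link_{x_{11}}(\Delta_I)=\sigma_W * \Delta_{I_{r-1}(Y)}$ is correct, and your top-left staircase $L_3$ of deleted entries is precisely the paper's deletion ladder $\mathbf{D}$. The proof breaks at the sentence ``the link computation above generalizes.'' It does not. Take $r=3$, suppose one deletion has been performed so that $L_3=\{x_{11}\}$, and let $v=x_{12}$ be the next shedding vertex. The column-one entries $x_{21},x_{31}$ lie neither in the hook of $v$ nor in the down-right submatrix $Y'$ (rows $\geq 2$, columns $\geq 3$), yet they are vertices of $\link_v(\del_{x_{11}}(\Delta_I))$, and they are genuinely constrained there: $G=\{x_{21},x_{32},x_{43}\}$ meets $Y'$ only in $x_{43}$, so it passes your proposed criterion, but $G$ is itself a $3$-diagonal, hence $G\cup\{v\}$ is not a face of $\Delta_I$ and $G$ is not in the link. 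The clean splitting at $x_{11}$ used that $(1,1)$ is minimal in both coordinates, so an $r$-diagonal avoiding $x_{11}$ still has at least $r-1$ entries strictly down-right of it; for $v=x_{12}$ an $r$-diagonal can spend one entry in column $1$ and one in column $2$, leaving only $r-2$ entries down-right of $v$. Consequently $\link_v$ is a ``mixed'' complex --- its faces must avoid $r$-diagonals globally \emph{and} $(r-1)$-diagonals down-right of $v$ --- and it is not the minor complex of any matrix in your three-ladder class. Your class is closed under deletions but not under links, and since Definition \ref{Def-Vert-Decomp} demands both, the induction collapses at the second vertex, no matter which corner of $L_3$ you choose.

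This failure is exactly what the paper's extra bookkeeping is designed to absorb: its strengthened statement carries a pair $(\mathbf{D},\mathbf{L})$, a deletion ladder together with the set of its corners that have already been linked, and proves vertex decomposability of $\link(\mathbf{L},\del(\mathbf{D},\Delta_I))$. Linking a corner never drops $r$; it merely moves that corner into $\mathbf{L}$, staying inside the class. The drop to $(r-1)$-minors is deferred to the terminal ladders, where \emph{all} corners of $\mathbf{D}$ are linked: there every facet is forced to contain one and the same tendril $T$ hugging $\mathbf{D}$ (note that $T$ typically contains diagonally adjacent entries such as $x_{22}$, so it is larger than the hook), and $\Delta(\mathcal{L})=(T\setminus\mathbf{L})*\Delta_{I'}(\mathcal{L}')$, where $I'$ is the ideal of $(r-1)$-minors of the matrix with its first row removed and $\mathcal{L}'$ is an inherited deletion ladder; the induction is then on $n+m$. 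Two further corrections: purity for the deletion-only class requires no third $\ENS$-type correction term --- the lemma preceding Corollary \ref{Cor-Link-Del-Pure} shows that deleting along a top-left ladder preserves the dimension exactly, every maximal face avoiding $\mathbf{D}$ being a full-dimensional facet of $\Delta_I$ --- so your proposed dimension formula aims at the wrong target; and the terminal analysis is not a degeneration into pinches but the coning just described. Repairing your argument means reintroducing the linked-corner bookkeeping, at which point it becomes the paper's proof.
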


Before we proceed with the proof, we need to establish some notation and terminology. We will show that $\Delta_I$ is vertex decomposable by describing a recursive algorithm for choosing vertices that satisfy Definition \ref{Def-Vert-Decomp}. We will choose the vertices in such a way that we build a ladder in the top left corner. Let $L_1$ and $L_2$ be triangles of size $t_1$ and $t_2$, respectively. We say that $\mathcal{L}$ is a ladder of entries if for a non-increasing sequence of non-negative integers $a_1,...,a_{m - t_2}$, with $a_1 \leq n - t_1$, $\mathcal{L}$ consist of the first $a_k$ entries in the $k$-th column. We allow $\mathcal{L}$ to be empty. We will augment a ladder $\mathcal{L}$ with additional data: two subsets $\textbf{D}$ and $\textbf{L}$ that form a partition of $\mathcal{L}$. \\

Now given a non-empty set of non-zero entries $\mathbf{A} = \{x_1,...,x_k\}$ of $X$ and a subcomplex $\Delta$ of $\Delta_I$, we set $\link(\mathbf{A}, \Delta)$ to be the iterated link along $\mathbf{A}$, i.e., $$\link(\mathbf{A}, \Delta) = \link_{x_k}(\link_{x_{k-1}}( ... \link_{x_1}(\Delta))).$$
Notice that by \cite[1.1]{Provan1980}, $\link(\mathbf{A})$ does not depend on the ordering of the entries of $\mathbf{A}$ and hence is well defined. For convenience, we set $\link(\emptyset, \Delta) = \Delta$. Similarly, we define $\del(\mathbf{A})$, and set $\del(\emptyset, \Delta) = \Delta$. Now given $(\mathcal{L}, \mathbf{D}, \mathbf{L})$, we define $\Delta_I(\mathcal{L}) = \Delta_I(\mathcal{L}, \mathbf{D}, \mathbf{L}) = \link(\mathbf{L}, \del(\mathbf{D},\Delta_I))$. We note that by \cite[1.1]{Provan1980}, $\Delta(\mathcal{L})$ is the same as the iterated deletion and link along $\mathcal{L}$ in any order. If the underlying ideal of minors $I$ is understood, we will drop it as a subscript and write $\Delta(\mathcal{L})$. Before we prove the theorem, we need the following lemma.

\begin{Lemma} Suppose $L_1$ is a triangle or contained in a $r-1 \times r - 1$ triangle, and $L_2$ is also a triangle or contained in a $r-1 \times r - 1$ triangle. Given $(\mathcal{L}, \mathbf{D}, \mathbf{L})$ with $\mathbf{D} = \mathcal{L}$, $\Delta(\mathcal{L})$ is pure of the same dimension as $\Delta_I$.
\end{Lemma}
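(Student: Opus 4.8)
The plan is to first unwind the definitions: since $\mathbf{D} = \mathcal{L}$ forces $\mathbf{L} = \emptyset$, we have $\Delta(\mathcal{L}) = \del(\mathcal{L}, \Delta_I)$, the subcomplex of $\Delta_I$ consisting of the faces disjoint from $\mathcal{L}$. Writing $X_{\mathcal{L}}$ for the matrix obtained from $X$ by zeroing the entries of $\mathcal{L}$, a face of $\del(\mathcal{L}, \Delta_I)$ is exactly a set of non-zero entries of $X_{\mathcal{L}}$ satisfying condition $F_r$. Since the scraping machinery of Propositions \ref{Prop-Scrape} and \ref{Prop-Stairs} depends only on the positions of the non-zero entries and not on $X$ being a GD matrix, it applies verbatim to $X_{\mathcal{L}}$; thus the facets of $\del(\mathcal{L},\Delta_I)$ are precisely the maximal $(r-1)$-stairs of $X_{\mathcal{L}}$. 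Using Proposition \ref{Prop-Contain-Triangle} together with the fact that purity is preserved under coning (the remark following Proposition \ref{Prop-Contain-Triangle}), I would reduce to the case that $L_1$ and $L_2$ are triangles of sizes $t_1, t_2$; the hypotheses on $\mathcal{L}$ guarantee it is disjoint from $U_{r-1} \cup D_{r-1}$, so deletion along $\mathcal{L}$ commutes with this coning. It then suffices to prove that every maximal $(r-1)$-stair of $X_{\mathcal{L}}$ has the same cardinality as a facet of $\Delta_I$, whose value is recorded by Theorem \ref{Th-Triangle-Dim}.

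I would establish this by induction on $r$, mirroring the proof of Theorem \ref{Th-Triangle-Dim}. Given a facet $C$ of $\del(\mathcal{L}, \Delta_I)$, Proposition \ref{Prop-Stairs}(3),(6) lets me write $C = S(C) \sqcup C'$, where $S(C)$ is a maximal stair of $X_{\mathcal{L}}$ and $C' = C \setminus S(C)$ is a maximal $(r-2)$-stair of the submatrix $Y$ obtained by deleting the first row and column. The restriction $\mathcal{L}'$ of $\mathcal{L}$ to $Y$ is again an admissible top-left ladder for $Y$ (here the non-increasing hypothesis on the column heights of $\mathcal{L}$ and the bound $a_1 \le n - t_1$ are exactly what is needed), and $C'$ is a facet of $\del(\mathcal{L}', \Delta_{I_{r-1}(Y)})$. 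By the inductive hypothesis $|C'|$ is the constant facet-size of $\Delta_{I_{r-1}(Y)}$, and the additive decomposition $|C| = |S(C)| + |C'|$ reduces the whole statement to the $r=2$ base case, namely that all maximal stairs of $X_{\mathcal{L}}$ share the common size of maximal stairs of $X$.

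For the base case I would use the $r=2$ analysis inside the proof of Proposition \ref{Prop-Unpinched-Purity}: the size of a maximal stair depends only on the diagonals containing its two endpoints, a maximal stair begins at a corner of $L_2$ (an entry with empty $B$, i.e.\ nothing weakly above-and-right) and ends at a corner of $L_1$ (nothing weakly below-and-left), and when $L_1, L_2$ are triangles all corners of $L_1$ lie on one diagonal and all corners of $L_2$ on another. The crux is therefore to show that passing from $X$ to $X_{\mathcal{L}}$ changes neither the set of corners of $L_2$ nor the diagonal on which the terminal corners of $L_1$ lie. For the starting corners, since $\mathcal{L}$ is a top-left ladder with non-increasing column heights, every entry lying directly below $\mathcal{L}$ has a non-zero entry immediately to its right in the same row (in a shorter column of $\mathcal{L}$, or on the boundary of $L_2$), so $\mathcal{L}$ creates no new entry with empty $B$; hence the corners of $L_2$ are unchanged and all lie on the $L_2$-diagonal. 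For the terminal corners, any entry whose entire down-and-left region were zeroed would, because $\mathcal{L}$ occupies only the top rows while $L_1$ occupies only the bottom rows and the two are separated by the bound $a_1 \le n - t_1$, still see the non-zero middle band of some left column below it; so no terminal off the $L_1$-diagonal is created. Consequently every maximal stair of $X_{\mathcal{L}}$ runs from the $L_2$-diagonal to the $L_1$-diagonal and has size $n + m - t_1 - t_2 - 1$, matching $X$.

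The step I expect to be the main obstacle is precisely this base case, and within it the verification that the top-left ladder $\mathcal{L}$ introduces no new corners of $L_1$ or $L_2$; everything hinges on the monotonicity of the ladder and its separation from $L_1$ and $L_2$. The degenerate situation in which a column (or, symmetrically, a row) of $X_{\mathcal{L}}$ becomes identically zero, occurring when $a_1 = n - t_1$, must be treated separately by discarding that column and checking that the endpoint diagonals are still realized by some surviving corner, so that the common size is unaffected.
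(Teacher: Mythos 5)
Your reduction to the case of triangles and your identification of what must be proved are fine, but the inductive step rests on a claim that is false in the generality you assert it: that Propositions \ref{Prop-Scrape} and \ref{Prop-Stairs} ``depend only on the positions of the non-zero entries'' and therefore apply verbatim to $X_{\mathcal{L}}$. This is true of Proposition \ref{Prop-Scrape}, but not of Proposition \ref{Prop-Stairs}(3) (nor of (6), whose proof invokes (3)). The proof of (3) uses the GD shape in an essential way: for the minimal element $z=x_{ab}$ of $E\cap S(C)$ it requires the position $x_{(a-1)(b-1)}$ to be a non-zero entry, and in $X_{\mathcal{L}}$ that position can lie in $\mathcal{L}$ (this happens just below-right of the inner corners of $\mathcal{L}$). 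In fact (3) genuinely fails for matrices with a top-left ladder zeroed out: in
\[
X_{\mathcal{L}}=\begin{bmatrix} 0&0&a\\ 0&b&c\\ d&e&f \end{bmatrix},
\]
a generic $3\times 3$ matrix with the ladder $\{x_{11},x_{12},x_{21}\}$ deleted, the set $C=\{a,b,c,d,e,f\}$ of all non-zero entries is a maximal $2$-stair, its scrape is $S(C)=\{d,e,b,c,a\}$, and $C\setminus S(C)=\{f\}$ is \emph{not} a maximal stair of the submatrix obtained by removing the first row and column, since $\{c,f\}$ is a stair there (the paper's argument breaks at $z=x_{23}$, $w=x_{12}\in\mathcal{L}$). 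Your induction needs exactly the two statements that fail to transfer: that $S(C)$ is a maximal stair of $X_{\mathcal{L}}$ (so the base case computes $|S(C)|$), and that $C\setminus S(C)$ is a \emph{facet} of $\del(\mathcal{L}',\Delta_{I_{r-1}(Y)})$ (so the inductive hypothesis computes its size). The scenario that must be excluded is that entries of $S(C)$ lying inside $Y$ can be re-adjoined to $C\setminus S(C)$ without creating an $(r-1)$-diagonal, i.e.\ that the scrape of a facet of the deletion is ``partially absorbed'' after passing to $Y$. Excluding this is the actual content of the lemma; it is where the hypotheses tying $\mathcal{L}$, $L_1$, $L_2$ to $r$ must enter, and your inductive step never invokes them (the counterexample above is a legal ladder for $r=2$, which is why no contradiction with the lemma arises, but it shows the transfer cannot be had by citation).

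This is also precisely where your route diverges from the paper's, which avoids the difficulty by a different induction: it inducts on $|\mathcal{L}|$, removing one inner corner $v$ at a time, and shows that every facet of $\Delta(\mathcal{L})$ is already a facet of $\Delta(\mathcal{L}\setminus\{v\})$, hence inductively a facet of $\Delta_I$ itself. The bad alternative ($F\cup\{v\}$ a facet) is ruled out using the tendril decomposition of Proposition \ref{Prop-Tendrils} and an extra entry found on the diagonal $x_{ab},x_{a+1,b+1},\dots,x_{a+r-1,b+r-1}$, which lies outside $\mathcal{L}\cup L_1\cup L_2$ \emph{because} $L_1,L_2$ have been enlarged to triangles of size $r-1$ and $\mathcal{L}$ is confined by $a_1\le n-t_1$; this is how the size hypotheses are actually used. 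If you wish to salvage your induction on $r$, you must prove the needed analogues of (3) and (6) for $X_{\mathcal{L}}$ directly --- for instance using maximality of $C$ in $\del(\mathcal{L},\Delta_I)$ together with the facts that $S(C)$ satisfies $F_2$ and $C\setminus S(C)$ satisfies $F_{r-1}$ --- rather than cite them. A smaller gap of the same nature sits in your base case: the endpoint formula quoted from Proposition \ref{Prop-Unpinched-Purity} presupposes that a maximal stair is a connected path of unit steps, which for the non-GD matrix $X_{\mathcal{L}}$ again requires proof (one must check a maximal stair cannot jump diagonally across the zeroed ladder, using that $\mathcal{L}$ is closed under going up and going left).
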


\begin{proof} By Lemma \ref{Lemma-Cone-Over-VD} and Proposition \ref{Prop-Contain-Triangle}, we may assume $L_1$ and $L_2$ are triangles of size at least $r-1\times r-1$. We induct on $k = |\mathcal{L}|$. If $k = 0$, then $\Delta(\mathcal{L}) = \Delta_I$ is pure by Theorem \ref{Th-Triangle-Dim}. \\

Now suppose $k > 0$. Let $v=x_{ab}$ be an entry of $\mathcal{L}$ such that $x_{a+1,b} \notin \mathcal{L}$ and $x_{a,b+1} \notin \mathcal{L}$. In other words, $v$ is an inner corner of $\mathcal{L}$. Consider $\mathcal{L'} = \mathcal{L} \setminus \{v\}$. By induction, $\Delta(\mathcal{L'})$ is pure, furthermore since $\dim \Delta(\mathcal{L'}) = \dim \Delta_I$, we have that the facets of $\Delta(\mathcal{L'})$ are facets of $\Delta$ that do not meet $\mathcal{L'}$. \\

It suffices to show that a facet of $\Delta(\mathcal{L})$ is a facet of $\Delta(\mathcal{L}')$. Let $F$ be a facet of $\Delta(\mathcal{L})$. Then $F$ is a facet of $\Delta(\mathcal{L}')$ or $F \cup \{v\}$ is a facet of $\Delta(\mathcal{L}')$. We may assume the later case, otherwise we are done. We will show that this case leads to a contradiction by constructing a face $G$ of $\Delta(\mathcal{L})$ so that $F \subsetneq G$. By choice of $v$, we have that $\{x_{a+1,b+1}, ..., x_{a+r-1, b+r-1} \}$ is completely outside of $\mathcal{L}$. As $F \cup \{v\}$ is a facet of $\Delta_I$, we may consider its tendrils $T_1,..,T_{r-1}$. Since there are $r-1$ many tendrils and they are disjoint there must be an element of $\{x_{ab}, ..., x_{a+r-1, b+r-1}\}$ outside of $F \cup \{v\}$. Let $x_{a+l,b+l}$ be such an element with the smallest row index, and for $j = 1,...,l$, we relabel the tendrils so that $T_j$ contains $x_{a+j-1,b+j-1}$. Now set $G = F \cup \{ x_{a+l,b+l}\}$. We will be done once we show that $G$ is a face of $\Delta_I$, and since $G$ does not meet $\mathcal{L}$, it must also be a face of $\Delta(\mathcal{L})$. \\

We will show that $G$ satisfies condition $F_r$. Suppose not, then $G$ contains a $r$-chain $x_1<...<x_r$. We may assume the chain contains $x_{a+l,b+l}$. Otherwise, the chain would be in $F$, but $F$, considered as a face of $\Delta_I$, satisfies condition $F_r$. Now removing $x_{a+l,b+l}$ from this chain yields a $r-1$ chain in $F \subset F \cup \{v\}$. Since the tendrils of $F \cup \{v\}$ are disjoint and satisfy condition $F_2$, this forces each tendril to contain a unique element of this $r-1$ chain. Hence $T_1,...,T_{l}$ contains a $l$-chain, which we relabel as $x_1,...,x_{l}$. \\

Notice that for $j=1,...,l$, either $x_j < x_{a+l,b+l}$ or $x_{a+l,b+l} < x_j$. The latter case leads to a contradiction as then $x_{a+j-1,b+j-1} < x_j$ is a $2$-chain in $T_j$. Hence $x_j < x_{a+l,b+l}$ for all $j$. Now $x_1,...,x_l,x_{a+l,b+l}$ forms an $l+1$ chain but the shape of $\mathcal{L}$ and the choice of $x_{ab}$ makes this impossible.
\end{proof}

Since the link of any pure complex is still pure, we immediately arrive at the following corollary.

\begin{Cor}\label{Cor-Link-Del-Pure}  Suppose $L_1$ is a triangle or contained in a $r-1 \times r - 1$ triangle, and $L_2$ is also a triangle or contained in a $r-1 \times r - 1$ triangle.  Given $(\mathcal{L}, \mathbf{D}, \mathbf{L})$ where $\mathbf{D}$ is a ladder shape, then $\Delta(\mathcal{L})$ is pure.

\end{Cor}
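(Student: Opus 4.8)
The plan is to reduce the statement to the preceding Lemma together with the elementary observation that passing to a link preserves purity. Recall that, by definition, $\Delta(\mathcal{L}) = \link(\mathbf{L}, \del(\mathbf{D}, \Delta_I))$, where $\mathbf{D}$ and $\mathbf{L}$ form a partition of $\mathcal{L}$. The point is that the deletion $\del(\mathbf{D}, \Delta_I)$ is itself an instance covered by the Lemma: since $\mathbf{D}$ is a ladder shape, I would apply the Lemma with $\mathbf{D}$ playing the role of the ladder $\mathcal{L}$ and with empty link part, i.e.\ to the triple $(\mathbf{D}, \mathbf{D}, \emptyset)$. This yields that $\del(\mathbf{D}, \Delta_I)$ is pure (indeed of the same dimension as $\Delta_I$).

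It then remains to see that $\Delta(\mathcal{L})$, being the iterated link of the pure complex $\del(\mathbf{D}, \Delta_I)$ along the vertices of $\mathbf{L}$, is again pure. For this I would record the general fact that the link of a pure complex at a vertex is pure. If $\Gamma$ is pure and $F$ is a facet of $\link_v(\Gamma)$, then $F \cup \{v\}$ is a facet of $\Gamma$: a proper enlargement $F \cup \{v,w\} \in \Gamma$ would give $F \cup \{w\} \in \link_v(\Gamma)$, properly enlarging $F$ and contradicting maximality. Since $\Gamma$ is pure, all such $F \cup \{v\}$ have the same cardinality, hence so do all facets $F$ of $\link_v(\Gamma)$; thus $\link_v(\Gamma)$ is pure. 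Iterating this over the vertices of $\mathbf{L}$ --- and invoking the order-independence of the iterated link from \cite[1.1]{Provan1980} --- shows that $\Delta(\mathcal{L}) = \link(\mathbf{L}, \del(\mathbf{D}, \Delta_I))$ is pure.

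I do not anticipate a serious obstacle: all the real content lives in the Lemma, and the remaining ingredient is the purely combinatorial fact that links preserve purity. The only point requiring a little care is the degenerate situation in which $\mathbf{L}$, or one of the intermediate vertices, is not a face of the complex to which the link is applied; then the corresponding link is the void (or irrelevant) complex, which is vacuously pure, so the induction proceeds unchanged.
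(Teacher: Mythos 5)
Your proposal is correct and is essentially the paper's own argument: the paper deduces the corollary in one line from the preceding Lemma (applied with $\mathbf{D}=\mathcal{L}$, $\mathbf{L}=\emptyset$, giving purity of $\del(\mathbf{D},\Delta_I)$) together with the fact that the link of a pure complex is pure. Your write-up just makes explicit the proof of that link-purity fact and the order-independence from \cite[1.1]{Provan1980}, both of which the paper leaves implicit.
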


We establish a terminology before proceeding. We say that a non-zero entry $x_{ij}$ is a corner of a ladder $\mathcal{L}$ if $x_{ij} \notin \mathcal{L}$ but $x_{i-1,j} \in \mathcal{L}$ or $i = 1$, and $x_{i,j-1} \in \mathcal{L}$ or $j=1$.  Notice that each row can contain at most one corner of $\mathcal{L}$, hence we may order the corners of $\mathcal{L}$ in ascending order by their row index. Also notice that if $x$ is a corner of $\mathcal{L}$, then $\mathcal{L} \cup \{x\}$ is also a ladder.\\

Now since we plan to prove Theorem \ref{Th-Triangle-VD} inductively, we would like to strengthen the statement and prove a stronger statement below. Theorem \ref{Th-Triangle-VD} will follow.

\begin{Th} Let $X$ be a generic $n \times m$ GD matrix. Suppose $L_1$ is a triangle or contained in a $r-1 \times r - 1$ triangle, and $L_2$ is also a triangle or contained in a $r-1 \times r - 1$ triangle. Let $(\mathcal{L}, \mathbf{D}, \mathbf{L})$ be a ladder such that $\mathbf{D}$ is a ladder shape, and $\mathbf{L}$ consist of the first few consecutive corners of $\mathbf{D}$, then $\Delta(\mathcal{L})$ is vertex decomposable.

\end{Th}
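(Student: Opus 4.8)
The plan is to prove the stronger statement by induction, using the recursive structure of Definition \ref{Def-Vert-Decomp} directly: I must exhibit a vertex $v$ so that both $\del_v(\Delta(\mathcal{L}))$ and $\link_v(\Delta(\mathcal{L}))$ are again of the form $\Delta(\mathcal{L}')$ for a triple $(\mathcal{L}', \mathbf{D}', \mathbf{L}')$ satisfying the same hypotheses, and then invoke the inductive hypothesis. The natural choice for $v$ is the \emph{first corner} of the ladder $\mathbf{D}$ that is not yet in $\mathbf{L}$ — i.e., the next corner to be absorbed — ordered by ascending row index as set up just before the statement. The two operations then correspond to the two partition-classes: deleting $v$ means moving $v$ into $\mathbf{D}$ (so $\mathbf{D}' = \mathbf{D} \cup \{v\}$, enlarging the ladder shape), while taking the link means moving $v$ into $\mathbf{L}$ (so $\mathbf{L}' = \mathbf{L} \cup \{v\}$). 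The key bookkeeping fact, which follows from the remark after the corner definition, is that $\mathbf{D} \cup \{v\}$ is again a genuine ladder shape, so the deletion branch lands in the inductive hypothesis; and $\mathbf{L} \cup \{v\}$ is still an initial segment of consecutive corners of $\mathbf{D}'$, so the link branch does too.

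First I would verify the base case: when $\mathcal{L}$ is the full matrix (so $\mathbf{D} = \mathcal{L}$ covers everything available and there are no remaining corners to process), $\Delta(\mathcal{L})$ has been reduced to a simplex or $\{\emptyset\}$, which is vertex decomposable. More precisely I expect the induction to run on a decreasing quantity such as the number of non-zero entries not yet placed in $\mathcal{L}$, or equivalently on $\dim R - |\mathcal{L}|$; when this reaches its minimum the complex is a simplex on the surviving entries. Purity at every stage is exactly Corollary \ref{Cor-Link-Del-Pure}, which I may cite freely, so the ``pure'' requirement in Definition \ref{Def-Vert-Decomp} is automatic and I need only check that the deletion and link are again complexes of the prescribed form.

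The two branches require separate identification arguments. For the link branch, $\link_v(\Delta(\mathcal{L})) = \Delta(\mathcal{L} \cup \{v\}, \mathbf{D}, \mathbf{L} \cup \{v\})$ essentially by definition of $\Delta(\mathcal{L})$ as an iterated link/deletion (using the order-independence from \cite[1.1]{Provan1980}), and the new triple has $\mathbf{L}' = \mathbf{L} \cup \{v\}$ still an initial run of corners, so the inductive hypothesis applies. For the deletion branch, $\del_v(\Delta(\mathcal{L})) = \Delta(\mathcal{L} \cup \{v\}, \mathbf{D} \cup \{v\}, \mathbf{L})$, and here the subtle point is that $\mathbf{D} \cup \{v\}$ must still be a ladder shape and $\mathbf{L}$ must still consist of \emph{consecutive} corners of the enlarged $\mathbf{D}'$ counted from the top — this is where the choice of $v$ as the first unprocessed corner is essential, since absorbing a corner preserves the ladder shape and shifts the corner sequence compatibly.

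The main obstacle I anticipate is not the recursive identification of the two branches but ensuring the ladder hypotheses are genuinely preserved so that the inductive hypothesis is applicable in both branches simultaneously, and in particular confirming that after deletion the entry $v$ truly becomes an inner part of the new ladder (so that $\mathbf{D}'$ is a valid ladder shape) while the remaining corners to process still form a consecutive initial segment. A secondary technical point is the careful use of \cite[1.1]{Provan1980} to commute the deletion and link operations so that $\del_v$ and $\link_v$ of an iterated link/deletion are themselves of the iterated form; I would state this commutation cleanly at the outset to avoid repeatedly re-deriving it, and then the whole argument reduces to tracking the triple $(\mathcal{L}, \mathbf{D}, \mathbf{L})$ through the two moves.
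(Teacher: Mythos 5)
Your recursion step is exactly the paper's: the same choice of $v$ (the first corner of $\mathbf{D}$ not in $\mathbf{L}$), the same identifications $\del_v(\Delta(\mathcal{L})) = \Delta(\mathcal{L}\cup\{v\}, \mathbf{D}\cup\{v\}, \mathbf{L})$ and $\link_v(\Delta(\mathcal{L})) = \Delta(\mathcal{L}\cup\{v\}, \mathbf{D}, \mathbf{L}\cup\{v\})$, and purity supplied by Corollary \ref{Cor-Link-Del-Pure}. The genuine gap is your base case. The recursion does not bottom out when $\mathcal{L}$ fills the matrix; it bottoms out as soon as $\mathbf{L}$ equals the \emph{entire} set of corners of $\mathbf{D}$, because then no admissible $v$ exists: a vertex added to $\mathbf{L}$ must be a corner of $\mathbf{D}$ and none remain outside $\mathbf{L}$, while a vertex added to $\mathbf{D}$ must be a corner of $\mathbf{D}$ (else $\mathbf{D}\cup\{v\}$ is not a ladder shape), and all of those already lie in $\mathbf{L}$, contradicting that $\mathbf{D}$ and $\mathbf{L}$ partition $\mathcal{L}$. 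Such terminal states occur immediately: starting from the empty ladder, a single link step produces $(\{x_{11}\},\emptyset,\{x_{11}\})$, which is terminal since $x_{11}$ is the unique corner of $\emptyset$, and $\link_{x_{11}}(\Delta_I)$ is certainly not a simplex. Even the pure-deletion branch, which does reach the maximal ladder (the full $(n-t_1)\times(m-t_2)$ upper-left rectangle), does not end at a simplex: for $n=m=5$, $t_1=t_2=2$, $r=2$, the entries outside the maximal ladder contain the $2$-diagonal $\{x_{24},x_{35}\}$, so the deletion of the maximal ladder is not a face of $\Delta_I$. Thus your proposed base case is both unreachable in most branches and false at the states where the recursion actually halts.

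What is missing is precisely the heart of the paper's proof: the analysis of the terminal ladders. When $\mathbf{L}$ consists of all corners of $\mathbf{D}$, the paper argues: (i) if the whole first row (or first column) lies in $\mathbf{D}$, then $\Delta(\mathcal{L})$ equals $\Delta_{I'}(\mathcal{L}')$ for a ladder $\mathcal{L}'$ in the matrix with that row (column) deleted; (ii) otherwise $\mathbf{D}$ has a corner in the first row and one in the first column, both in $\mathbf{L}$, and then every facet of $\Delta(\mathcal{L})$ contains a uniquely determined tendril $T$ through $\mathbf{L}$, obtained by scraping, giving $\Delta(\mathcal{L}) = (T\setminus\mathbf{L}) * \Delta_{I'}(\mathcal{L}')$ where $\mathcal{L}'$ is a ladder for the $(r-1)$-minors of the matrix with the first row removed; vertex decomposability then follows from Lemma \ref{Lemma-Cone-Over-VD}. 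Note that this reduction changes both the matrix and the size of the minors, which is why the paper's outer induction runs on $n+m$ rather than on any quantity attached to $\mathcal{L}$ inside a fixed matrix; your induction on $\dim R - |\mathcal{L}|$ cannot absorb such a step. To repair your argument, keep your recursion verbatim, but replace your base case with this terminal-state analysis and wrap the whole proof in an induction on $n+m$.
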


\begin{proof}
We induct on $n+m$. The base case when $n + m =2$ is clear. Now suppose $n + m > 2$. Given a ladder $(\mathcal{L}, \mathbf{D}, \mathbf{L})$ in the form of the statement, we will describe a recursive algorithm for picking a vertex $v$ that satisfies Definition $\ref{Def-Vert-Decomp}$. \\

If $\mathbf{L}$ equals the set of all corners of $\mathbf{D}$, we stop. Otherwise, choose $v$ as the first corner of $\mathbf{D}$ not in $\mathbf{L}$. Now $(\mathcal{L} \cup \{v\}, \mathbf{D}\cup \{v\}, \mathbf{L})$ and $(\mathcal{L} \cup \{v\}, \mathbf{D}, \mathbf{L} \cup \{v \})$ are ladders satisfying the statement of the theorem, hence we may repeat the process with the two ladder shapes. \\

It is clear that the algorithm must terminate since there is only finitely many corner positions for any given ladder and the size of any ladder is bounded. Now in each step of the algorithm, $(\mathcal{L}, \mathbf{D}, \mathbf{L})$ satisfies the conditions of Corollary \ref{Cor-Link-Del-Pure}, hence $\Delta(\mathcal{L})$ is pure. It remains to show that the terminating ladder shapes are vertex decomposable. \\

Let $\mathcal{L}$ be a terminating ladder shape, i.e., all of the corners of $\mathbf{D}$ are in $\mathbf{L}$. If all of the non-zero entries in the first row are in $\mathbf{D}$, then $\Delta(\mathcal{L})$ is equal to $\Delta_{I'}({\mathcal{L}'})$ for some ladder $\mathcal{L}'$ in the matrix after removing the first row and $I'$ is the $r \times r$ minors of that matrix. Hence by induction, $\Delta(\mathcal{L})$ is vertex decomposable. A similar argument applies if all of the non-zero entries in the first column are in $\mathbf{D}$. \\

Now we may assume that not all of the non-zero entries in the first row and first column are in $\mathbf{D}$. Then $\mathbf{D}$ has a corner in the first row and a corner in the first column, which by assumption will both be in $\mathbf{L}$. Now every facet of $\Delta(\mathcal{L})$ comes from a facet of $\Delta_I$ that contains all of $\mathbf{L}$ and does not meet $\mathbf{D}$. Since $\mathbf{L}$ consist of the all of the corners of a ladder shape, for any facet of $\Delta(
\mathcal{L})$, there is a uniquely determined tendril, $T$, that goes through $\mathbf{L}$. Furthermore, $T$ is obtained by scraping. Hence after removing $T$, what remains can be considered as a facet of a submatrix $Y$, obtained by removing the first row. Hence $\Delta(\mathcal{L})$ is a cone over the complex of some ladder in $Y$. More precisely, consider $(\mathcal{L'} = (\mathcal{L} \cup T) \setminus \{\textrm{first row} \}, \mathbf{D}' = (\mathbf{D} \cup T) \setminus \{\textrm{first row} \}, \emptyset)$. Then $\Delta(\mathcal{L}) = (T\setminus\mathbf{L}) * \Delta_{I'}(\mathcal{L}')$, where ${\mathcal{L}}'$ is a ladder for the $r-1$ minors, $I'$, of $Y$. Hence $\Delta(\mathcal{L})$ is vertex decomposable by induction and Lemma \ref{Lemma-Cone-Over-VD}.
\end{proof}

Theorem \ref{Th-Triangle-VD} follows since by Proposition \ref{Prop-Contain-Triangle} and Lemma \ref{Lemma-Cone-Over-VD} we may assume $L_1$ and $L_2$ are triangles of size at least $r-1\times r-1$. 
Now, combining Proposition \ref{Prop-Nec-CM} and Theorem \ref{Th-Triangle-VD}, we achieve the following.

\begin{Cor}
$\Delta_I$ is Cohen-Macaulay if and only if $L_1$ is a triangle or contained in a $r-1 \times r - 1$ triangle, and $L_2$ is a triangle or contained in a $r-1 \times r - 1$ triangle.
\end{Cor}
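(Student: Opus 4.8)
The plan is to obtain both directions of the biconditional by assembling the two main results already established, using the reduction of Proposition \ref{Prop-Contain-Triangle} to pass, when convenient, to the matrix $X'$ obtained by zeroing $U_{r-1}$ and $D_{r-1}$.

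For the sufficiency direction, suppose each of $L_1$ and $L_2$ is a triangle or is contained in an $(r-1)\times(r-1)$ triangle. Theorem \ref{Th-Triangle-VD} then asserts that $\Delta_I$ is vertex decomposable. Because vertex decomposability as in Definition \ref{Def-Vert-Decomp} already builds in purity, the implications recorded in the text, namely vertex decomposable implies shellable implies Cohen-Macaulay for pure complexes, give that $\Delta_I$ is Cohen-Macaulay; equivalently, by Theorem \ref{Th-ICM-iff-InICM}, $I$ itself is Cohen-Macaulay. No further work is needed here beyond citing these results.

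For the necessity direction, suppose $\Delta_I$ is Cohen-Macaulay. I would first dispose of the degenerate case $I = 0$: then $\IN(I) = (0)$, so $\Delta_I$ is the full simplex, which is trivially Cohen-Macaulay, and this case carries no $r$-diagonal and so may be set aside. When $I \neq 0$, Proposition \ref{Prop-Nec-CM}(2) applies verbatim and yields exactly the conclusion that $L_1$ and $L_2$ are each a triangle or contained in an $(r-1)\times(r-1)$ triangle.

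The real content therefore sits in the two invoked theorems rather than in their combination, and the one point requiring care is to keep the reduction to $X'$ consistent across both directions. Concretely, Proposition \ref{Prop-Contain-Triangle} gives the join decomposition $\Delta_I = (U_{r-1}\cup D_{r-1}) * \Delta'$, where $\Delta'$ is the complex of $X'$; I would invoke Corollary \ref{Cor-Reisner-Cor}(2) to transfer Cohen-Macaulayness and Lemma \ref{Lemma-Cone-Over-VD} to transfer vertex decomposability between $\Delta_I$ and $\Delta'$, so that the ``contained in an $(r-1)\times(r-1)$ triangle'' clause is interpreted correctly after the two corner triangles are zeroed. This bookkeeping is routine, and I expect it to be the only delicate step in turning the two prior results into the stated equivalence.
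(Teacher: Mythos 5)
Your overall route is exactly the paper's: sufficiency is Theorem \ref{Th-Triangle-VD} together with the chain vertex decomposable $\Rightarrow$ shellable $\Rightarrow$ Cohen--Macaulay (plus Theorem \ref{Th-ICM-iff-InICM} to pass between $\Delta_I$, ${\normalfont\IN}(I)$, and $I$), and necessity is Proposition \ref{Prop-Nec-CM}(2). The $X'$-bookkeeping you flag as the delicate step is actually not needed at the level of the corollary; it is already internal to the proofs of those two results, which are stated for the original $X$ and its ladders $L_1, L_2$.

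The one step of yours that does not hold up is the dismissal of the case $I=0$. You argue that then $\Delta_I$ is the full simplex, hence Cohen--Macaulay, and that the case ``may be set aside.'' That is backwards: if $I=0$ the hypothesis of the necessity direction is \emph{satisfied}, so you are obligated to deduce the shape condition on $L_1$ and $L_2$ --- and that deduction can genuinely fail. Concretely, take $n=m=3$, $r=3$, $L_2=\emptyset$, and let $L_1$ be the $2\times 2$ square in the bottom-left corner ($c_1=c_2=2$), so
\[
X=\begin{bmatrix} x_{11} & x_{12} & x_{13} \\ 0 & 0 & x_{23} \\ 0 & 0 & x_{33} \end{bmatrix}.
\]
Every $3\times 3$ minor vanishes (rows $2$ and $3$ are supported only in column $3$), so $I=0$ and $\Delta_I$ is a simplex, hence Cohen--Macaulay; yet $L_1$ is neither a triangle nor contained in a $2\times 2$ triangle. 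So the $I=0$ case cannot be set aside as harmless --- it must be \emph{excluded}, which is precisely why Proposition \ref{Prop-Nec-CM} carries the standing assumption $I\neq 0$, and the corollary has to be read as inheriting that assumption. Your instinct to address the degenerate case was good (the paper itself elides it entirely), but the correct resolution is to note that the equivalence is asserted only for $I\neq 0$, not to claim the case is trivially fine.
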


\section{Multiplicity of $I_r(X)$}\label{Sec-Multiplicity}

The $f$-vector of a simplicial complex is the vector whose $j$th component is the number of faces of that complex of dimension $j$.   
With the above description of $\Delta_I$ by \cite[6.2.1]{Herzog2011}, one can compute the multiplicity of $I$ by counting the number of maximal $(r-1)$-stairs. We demonstrate the computation of $f_d$, where $d = \dim \Delta_I$, in the case that the ladders of zeroes, $L_1$ and $L_2$, are triangles of size $t_1$ and $t_2$, respectively, cf. \cite[3.5]{Herzog1992}. If $L_1$ and $L_2$ are not triangles, the computation becomes more complicated.

\begin{Def} (\cite[Section 2.7]{Stanley2011})
A \textit{lattice path} $P=(v_0,...,v_k)$ in $\mathbb{N}^2$ with steps $(-1,0),(0,1)$ is a collection of points such that $v_i-v_{i-1}= (-1,0) \text{ or } (0,1)$.\\

An \textit{$n$-path} $\textbf{P}$ of type $(\alpha,\beta,\gamma, \delta)\in (\mathbb{N}^n)^4$ is a collection of paths $\textbf{P}=(P_1,...,P_n)$ such that $P_i$ is a path from $(\beta_i,\gamma_i)$ to $(\alpha_i,\delta_i)$. $\textbf{P}$ is said to be non-intersecting if $P_i\cap P_j=\emptyset$ for $i\neq j$.
\end{Def}

\begin{Th}\label{Th-pathcount} $($\cite[2.7.1]{Stanley2011}$)$
Let $(\alpha,\beta,\gamma, \delta)\in (\mathbb{N}^n)^4$ such that for all $w\in S_n$, the set of non-intersecting $n$-paths of type $(w(\alpha),\beta,\gamma, w(\delta))$ is empty unless $w$ is the identity. Then the number of non-intersecting $n$-paths of type $(\alpha,\beta,\gamma, \delta)$ is
\[\det [h_{i,j}],\]
where $h_{i,j}$ counts the number of paths from $ (\beta_i,\gamma_i)$ to $ (\alpha_j,\delta_j)$.
\end{Th}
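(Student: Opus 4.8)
This is the Lindström–Gessel–Viennot lemma, and the plan is to prove it by the standard sign-reversing involution on path systems. First I would expand the determinant over the symmetric group,
\[
\det[h_{i,j}] = \sum_{w \in S_n} \operatorname{sgn}(w) \prod_{i=1}^n h_{i,w(i)}.
\]
Since $h_{i,w(i)}$ counts lattice paths from the source $(\beta_i,\gamma_i)$ to the sink $(\alpha_{w(i)},\delta_{w(i)})$, the product $\prod_i h_{i,w(i)}$ enumerates all tuples $\mathbf{P}=(P_1,\dots,P_n)$ in which $P_i$ runs from source $i$ to sink $w(i)$; these are exactly the (not necessarily non-intersecting) $n$-paths whose connectivity permutation is $w$. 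Thus the determinant equals the signed count
\[
\det[h_{i,j}] = \sum_{\mathbf{P}} \operatorname{sgn}\bigl(w(\mathbf{P})\bigr),
\]
where $w(\mathbf{P})$ is the permutation matching sources to sinks and the sum ranges over all path systems.

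The heart of the argument is to show that every \emph{intersecting} path system cancels. I would define an involution $\iota$ on the set of intersecting systems as follows: among all pairs of paths that share a vertex, fix the canonical first intersection — say the smallest index $i$ such that $P_i$ meets another path, then the earliest vertex $p$ along $P_i$ lying on another path, and the smallest index $j > i$ with $p \in P_j$ — and exchange the portions of $P_i$ and $P_j$ beyond $p$. The new system $\iota(\mathbf{P})$ has the same underlying multiset of edges, still intersects at $p$, and its connectivity permutation differs from $w(\mathbf{P})$ by the transposition $(i\,j)$, so $\operatorname{sgn}$ flips. I would then verify that $\iota$ is genuinely an involution: the canonical data $(i,p,j)$ is unchanged by the swap because the initial segments of all paths up to and including $p$ are untouched, so applying $\iota$ twice restores $\mathbf{P}$. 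Consequently the intersecting systems split into sign-reversing pairs and contribute $0$ to the sum.

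What survives is the signed sum over non-intersecting systems, $\sum_{\mathbf{P}\ \mathrm{non\text{-}int.}} \operatorname{sgn}(w(\mathbf{P}))$. Here I would invoke the hypothesis: no non-intersecting $n$-path exists whose connectivity permutation is any $w \neq \mathrm{id}$, so every surviving system has $w(\mathbf{P}) = \mathrm{id}$ and hence sign $+1$. Therefore $\det[h_{i,j}]$ equals the number of non-intersecting $n$-paths of type $(\alpha,\beta,\gamma,\delta)$, as claimed.

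I expect the main obstacle to be the bookkeeping for the involution — precisely, fixing a canonical rule for selecting the intersection and proving that tail-swapping preserves that choice so that $\iota \circ \iota = \mathrm{id}$. One must also confirm that exchanging the continuations of two paths past a shared vertex yields valid lattice paths: because the step set $\{(-1,0),(0,1)\}$ is monotone, each swapped continuation remains an admissible path to its (possibly new) sink, so $\iota$ indeed lands back in the set of intersecting systems.
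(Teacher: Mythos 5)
Your proof is correct and is essentially the same argument as in the paper's cited source: the paper does not prove this theorem itself but quotes it from Stanley (Theorem 2.7.1), whose proof is exactly this Lindstr\"om--Gessel--Viennot sign-reversing tail-swap involution. Your handling of the two delicate points --- that the canonical intersection data is preserved so the swap is an involution, and that the monotone step set keeps swapped tails valid --- matches the standard treatment, so there is nothing to add.
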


Write $(\alpha,\beta,\gamma, \delta)=(A,B)\in (\mathbb{N}^2)^2$ where $A=(\alpha,\delta)$ and $B=(\beta,\gamma)$. Then it becomes clear that the condition on permutations informally translates to: Any non-trivial permutation of $A$ would force any $n$-path of type $(A,B)$ to be intersecting.

\begin{Not}
Let $\Gamma(A,B)$ denote the number of non-intersecting $n$-paths of type $(A,B)=(\alpha,\beta,\gamma, \delta)$, and let $h(A_j,B_i)$ denote the number of paths from $B_i$ to $A_j$.
\end{Not}

Notice that \[h(A_j,B_i)=
\begin{cases} 
      {{ \alpha_j-\beta i+\gamma_i-\delta_j }\choose {\alpha_j-\beta_i}} ={{ \alpha_j-\beta i+\gamma_i-\delta_j }\choose {\gamma_i-\delta_j}} & \alpha_j-\beta_i\geq 0 \text{ and }\gamma_i-\delta_j \geq 0\\
      0 &  \textrm{otherwise} \\
       
   \end{cases}.
\]

\begin{Not}
Given an ordered collection $V=(v_1,...,v_n) \subset \mathbb{N}^2$ and a set $N=\{k_1,...,k_m\}$ with $1\leq k_1 < k_2<...<k_m\leq n$, write $V_N:=(v_{k_1},...,v_{k_m})$.
\end{Not}

\begin{Lemma}
Let $L_1$ and $L_2$ be triangles of size $t_1$ and $t_2$ respectively, and $d=\dim \Delta_I$. Set $T_1=\max\{r-2,t_1\}$ and $T_2=\max\{r-2, t_2\}$ then \[ f_d(\Delta_I)=\sum_{\substack{{1\leq a_1<...<a_{r-1}\leq T_1+1}\\{1\leq b_1<...<b_{r-1}\leq T_2+1} }}
   \det\left[\begin{cases}
 {{n+m-T_1-T_2-2} \choose { m-1-T_2+b_j-a_i } }    &  T_2-m+1\leq b_j-a_i \leq n-1-T_1 \\
   0    & \normalfont{\textrm{otherwise } }
 \end{cases} \right]
.
\]
\end{Lemma}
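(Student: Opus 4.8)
The plan is to compute $f_d(\Delta_I)$ directly as the number of facets, which is legitimate because $\Delta_I$ is pure of dimension $d$ by Theorem \ref{Th-Triangle-Dim}, and by Proposition \ref{Prop-Stairs}(4) the facets are exactly the maximal $(r-1)$-stairs. Using Proposition \ref{Prop-Tendrils} I would decompose each facet into $U_{r-1}$, $D_{r-1}$, and $r-1$ pairwise disjoint tendrils, and recall from the description of stairs following Proposition \ref{Prop-Stairs} that each tendril is a path through the entries moving only down and to the left. The first step is to record the two endpoints of each tendril: a tendril emanates from the staircase boundary of the combined upper-right region $L_2 \cup U_{r-1}$ and terminates on the staircase boundary of the combined lower-left region $L_1 \cup D_{r-1}$.

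Next I would translate tendrils into lattice paths, sending the entry in row $i$ and column $j$ to the point $(j,i)$, so that a tendril becomes a monotone path with unit steps. The essential bookkeeping is that, after merging $L_2$ with the forced triangle $U_{r-1}$ and $L_1$ with $D_{r-1}$, the tendril endpoints are pinned to two fixed anti-diagonals: the upper endpoints lie on $j - i = m - T_2 - 1$ and the lower endpoints on $i - j = n - T_1 - 1$, where $T_i = \max\{r-2, t_i\}$. I would verify that these anti-diagonals carry exactly $T_2 + 1$ and $T_1 + 1$ lattice points, which is precisely where $\max\{r-2,t_i\}$ enters: for $t_i \le r-2$ the relevant boundary is the inner edge of $U_{r-1}$ (respectively $D_{r-1}$) rather than the edge of $L_i$, while for $t_i \ge r-1$ it is the edge of $L_i$. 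A facet then determines, and is determined by, a choice of $r-1$ upper endpoints $b_1 < \cdots < b_{r-1}$, $r-1$ lower endpoints $a_1 < \cdots < a_{r-1}$, and a family of monotone paths joining them; the requirement that the tendrils be pairwise disjoint and jointly form a maximal stair corresponds, after the standard index shift, to the paths being non-intersecting. A monotone path joining the lower point indexed $a_i$ and the upper point indexed $b_j$ has the fixed length $n + m - T_1 - T_2 - 2$, of which $m - 1 - T_2 + b_j - a_i$ steps are horizontal, so it exists precisely when $T_2 - m + 1 \le b_j - a_i \le n - 1 - T_1$, and the number of such paths is the binomial $h(A_j, B_i)$ in the statement, which is $0$ outside that range.

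Finally I would check the hypothesis of Theorem \ref{Th-pathcount}: since the chosen endpoints are linearly ordered along their anti-diagonals and the paths are monotone, any non-trivial permutation of the endpoints forces two paths to cross, so only the identity permutation admits a non-intersecting family. Applying Theorem \ref{Th-pathcount} for each fixed pair of endpoint sets yields $\det[h(A_j, B_i)]$ as the number of facets with those prescribed endpoints, and summing over all admissible $a_1 < \cdots < a_{r-1}$ and $b_1 < \cdots < b_{r-1}$ gives the formula; in the generic case $t_1 = t_2 = 0$ one has $T_1 = T_2 = r-2$, the sum collapses to its single term $a_i = b_i = i$, and one recovers \cite[3.5]{Herzog1992}.

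The main obstacle I anticipate is the bijection of the second paragraph: pinning the tendril endpoints to the correct anti-diagonals with the correct count $T_i + 1$, i.e.\ correctly absorbing $U_{r-1}$ and $D_{r-1}$ into $L_2$ and $L_1$ and handling the two regimes $t_i \le r-2$ and $t_i \ge r-1$ uniformly, and then confirming that disjointness of tendrils as sets of entries matches vertex-disjointness of the shifted lattice paths, so that the non-intersection hypothesis of Theorem \ref{Th-pathcount} genuinely holds. The length and step-count computations, and the final summation, should then be routine.
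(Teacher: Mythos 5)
Your proposal is correct and follows essentially the same route as the paper: absorb the forced triangles $U_{r-1}$, $D_{r-1}$ into $L_2$, $L_1$ (giving the enlarged sizes $T_i=\max\{r-2,t_i\}$), identify facets with families of $r-1$ non-intersecting monotone lattice paths between the corners on the two anti-diagonals, apply Theorem \ref{Th-pathcount} for each choice of endpoint sets, and sum the resulting determinants. The endpoint coordinates, path-length bookkeeping, and binomial entries you describe match the paper's computation exactly.
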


 \begin{proof}
 Set $\Delta=\Delta_I$. Joining a complex with a simplex does not change the top value of the $f$-vector, only its index. Hence by Proposition \ref{Prop-Contain-Triangle}, we may replace $\Delta$ by a complex $\Delta'$, where $\Delta '$ is the complex for a generic $n\times m$ GD matrix with $L_1$, a size $T_1$ triangle, and $L_2$, a size $T_2$ triangle. In this setting, it follows from Theorem \ref{Th-Triangle-Dim} that a maximal $(r-1)$-stair is a disjoint union of $r-1$ disjoint maximal stairs, hence a $(r-1)$-stair is a $(r-1)$-path.\\
 
 Set $A_a=(n-T_1-1+a,a)$ for $1\leq a \leq T_1+1$, and set $B_b=(b,m-T_2-1+b)$ for $1\leq b \leq T_2 +1$. These are the corners of $L_1$ and $L_2$, respectively.\\
 
 Then each facet of $\Delta'$ correspond to a $(r-1)$-path of type $(A_J,B_K)$, where $J={a_1,...,a_{r-1}}$ with $1\leq a_1 <a_2<...<a_{r-1}\leq T_1 +1$, and $K={b_1,...,b_{r-1}}$ with $1\leq b_1 <b_2<...<b_{r-1}\leq T_2 +1$, hence \[f_d=\sum_{\substack{J={1\leq a_1<...<a_{r-1}\leq T_1+1}\\K={1\leq b_1<...<b_{r-1}\leq T_2+1} }} \Gamma(A_J,B_K) .\] 

 Now it is clear that $(A_J,B_K)$ satisfies the assumption of Theorem \ref{Th-pathcount} hence  $\Gamma(A_J,B_K)=\det[h((A_J)_i,(B_K)_j)]=\det[h( (n-T_1-1+a_i,a_i),(b_j,m-T_2-1+b_j))] $. By the above formula, we have that \[
 h( (n-T_1-1+a_i,a_i),(b_j,m-T_2-1+b_j))\]
 \[
 =\begin{cases}
 {{(n-T_1-1+a_i-b_j)+(m-T_2-1+b_j-a_i)} \choose { m-T_2-1+b_j-a_i} }  &  T_2-m+1\leq b_j-a_i \leq n-1-T_1\\
    0   & \textrm{otherwise } 
     \end{cases} \]
 
 \[
 =\begin{cases}
 {{n+m-T_1-T_2-2} \choose { m-1-T_2+b_j-a_i } }     &  T_2-m+1\leq b_j-a_i \leq n-1-T_1 \\
   0    & \text{otherwise } 
 \end{cases} .\]
 \end{proof}

\begin{Rmk}
In \cite{Herzog1992}, since the authors are only concerned with generic matrices and do not need to avoid zeros, they choose their starting points and ending points to be the last $r-1$ entries on the first row and column, respectively. Specializing our proof to the generic case will yield a different matrix with the same determinant. This is an artifact of the different counting methods as can be seen in the example below. 
\\

Let $m=n=4$, $t_1=t_2=0$, and $r=3$. That is to say, we are considering the $3 \times 3$ minors of a generic $4\times 4$ matrix. Then we have that \[
\renewcommand\arraystretch{1.2}
f_d=\det  \begin{bmatrix} \binom{4}{2} & {4\choose 3} \\ {4\choose 3} & {4\choose 2} \end{bmatrix}= \det \begin{bmatrix} 6 & 4\\ 4 & 6 \end{bmatrix} =20,\]
compared to the the formula in \cite[3.5]{Herzog1992}:
\[
\renewcommand\arraystretch{1.2}
f_d=\det \begin{bmatrix} {6\choose 3} & {5\choose 3} \\ {5\choose 2} & {4\choose 2} \end{bmatrix}= \det \begin{bmatrix} 20 & 10\\ 10 & 6 \end{bmatrix} =20.\]
\end{Rmk}

  \bibliographystyle{amsalpha}
  \bibliography{GD}

\end{document}